\documentclass[a4paper, 11pt]{amsart}
\usepackage{epsfig}
\usepackage{amsthm,amsfonts}
\usepackage{amssymb,graphicx,color}
\usepackage[all]{xy}
\usepackage{cancel} 
\usepackage{verbatim}
\usepackage{hyperref}
\usepackage{enumitem}
\usepackage{booktabs}

\usepackage{biblatex}

\newtheorem{theorem}{Theorem}[section]
\newtheorem*{theorem*}{Theorem}
\newtheorem{lemma}[theorem]{Lemma}
\newtheorem{corollary}[theorem]{Corollary}
\newtheorem{proposition}[theorem]{Proposition}

\newtheorem{problem}{Problem}

\theoremstyle{definition}
\newtheorem{definition}[theorem]{Definition}
\newtheorem{example}[theorem]{Example}

\theoremstyle{remark}
\newtheorem{remark}[theorem]{Remark}

\newcommand{\R}{\mathbb{R}}
\newcommand{\C}{\mathbb{C}}
\newcommand{\Z}{\mathbb{Z}}

\newcommand{\N}{\mathbb{N}}
\renewcommand{\Re}{\mathfrak{Re}}
\renewcommand{\Im}{\mathfrak{Im}}
\newcommand{\Cont}{\text{Cont}}

\title{Outer Lipschitz geometry of unions of tangential nodal sets}

\author[J. E. Sampaio]{José Edson Sampaio}

\author[L. M. Sousa]{Luan Moura Sousa}

\author[E. F. de Souza]{Emanoel Ferreira de Souza}

\begin{document}

\begin{abstract}
In 2000, Birbrair and Fernandes proved that the Hölder Semicomplex of the branches of a germ of a curve with its contact orders is a complete invariant of outer bi-Lipschitz equivalence of definable sets in polynomially bounded o-minimal structures. They also proved a realization theorem for it, i.e., for each H\"older semicomplex $G$, there is a germ of a semialgebraic curve $(X,x_0)$ such that the Hölder Semicomplex of $(X,x_0)$ coincides with $G$. In 2000, Birbrair proposed the problem of finding and characterizing all the H\"older semicomplexes that can be realized as a Hölder Semicomplex of a real analytic curve. Solving this problem in its full generality is a notoriously hard task; however, in this paper, we provide a complete answer to this problem when we restrict it to tangential nodal curves: curves defined as the union of the zero set of a harmonic function and its tangent cone at a fixed point.
\end{abstract}

\thanks{The first named author was partially supported by CNPq-Brazil grants 303375/2025-6, 404611/2025-7 and 405618/2023-9. The second named author was financed in part by the Coordenação de Aperfeiçoamento de Pessoal de Nível Superior - Brasil (CAPES) - Finance Code 001. This work was supported by the Serrapilheira Institute (grant number Serra -- R-2110-39576).
}

\maketitle

\pagenumbering{arabic}
\keywords{Bi-Lipschitz classification, Nodal curves, Harmonic functions}

\section{Introduction}

In \cite{birb}, Birbrair and Fernandes provided a complete bi-Lipschitz classification of germs of semialgebraic curves (or, more generally, curves that are definable in a polynomially bounded o-minimal real structure). For that, they introduced the so called H\"older semicomplex, which is an outer bi-Lipschitz invariant associated with the germ of a semialgebraic curve $X$ at the point $x_0$, denoted by $sh(X,x_0)$. Roughly speaking, $sh(X,x_0)$ is a pair consisting of a complete graph collecting the semibranches of $X$ and a map gathering the contact orders (see Definition \ref{Def: cont order}) of each pair of semibranches. Moreover, they also defined abstractly a purely combinatorial object called H\"older semicomplex and proved a realization theorem for it, i.e., for each H\"older semicomplex there is a semialgebraic curve $X$ such that $sh(X,x_0)$ coincides with the given one.  

We learnt from Fernandes that Birbrair proposed the following problem in 2000:
\begin{problem}
Find and characterize all the H\"older semicomplexes $\Gamma$ such that there is a real analytic curve $X$ with $sh(X,x_0)=\Gamma$. 
\end{problem}

Solving Problem 1 in its full generality is a notoriously hard task. The category of general real analytic curves exhibits extreme topological and metric flexibility. 
Because of this inherent wildness, it is a necessary and fruitful strategy to restrict the realization problem to a more rigid, yet sufficiently rich, subclass of analytic curves. 

The first class of germs of analytic sets that we have in mind is based on nodal curves: curves defined as the zero set of a harmonic function in some open subset of $\R^2$. 
In this paper, we tackle Problem 1 by restricting our focus to tangential nodal sets: curves defined as the union of the zero set of a harmonic function and its tangent cone at a fixed point. Harmonic functions impose strict geometric rigidity that tames the wild behavior seen in general analytic curves while still generating non-trivial H\"older semicomplexes.  

Specifically, we determine all the semicomplexes $\Gamma$ for which there exists a nodal curve $X$ satisfying $sh(X \cup C(X, x_0), x_0) = \Gamma$. We begin by showing that the result equivalent to Theorem 4.1 in \cite{birb} is significantly simpler in this context (see Proposition \ref{Prop: simple classif 2}), as the contact orders of semibranches of nodal curves are always equal to 1. However, by considering the set $X \cup C(X, 0)$, we obtain higher contact orders. We provide a closed formula to compute the contact order between a branch and its tangent line  (see Corollary \ref{Cor: formula geral para contato}), which we define as its linear contact order. 

To bridge the gap between abstract semicomplexes and those actually realizable by tangential nodal sets, we uncover rigid arithmetic criteria that govern the distribution of linear contact orders. By analyzing the harmonic functions that define these sets, we demonstrate that the admissible linear contact orders are heavily constrained by modular arithmetic conditions on $\mathbb{Z}_n$. Specifically, for a nodal set with $n$ branches, numbered from $0$ to $n-1$, we introduce a sequence of subgroups $H_m \subset \mathbb{Z}_n$ generated by divisors of the form $n/\gcd(n, \alpha(m)-1)$, where $\alpha(m)$ is the linear contact order of the branch $m$. We prove that the distribution of linear contact orders is completely determined by a contact filtration of these subgroups.  

With these arithmetic criteria established, we formally define a filtration semicomplex (see Definition \ref{Def: filtration semicomplex}), which is an algebraic object admitting a realization theorem (see Theorem \ref{Thm: realization theorem}). The proof of our realization theorem (Theorem \ref{Thm: realization theorem}) then proceeds constructively. Given an abstract filtration semicomplex that strictly satisfies these algebraic and divisibility conditions, we explicitly construct a complex polynomial $F$ such that the tangential nodal set defined by its real part exactly realizes the prescribed filtration. Depending on whether the maximum linear contact order is finite or infinite, the terms of $F$ are carefully chosen so that their degrees and coefficients dictate the contact orders via our closed formula, effectively avoiding specific degenerate trigonometric values.

The paper is organized as follows. Section \ref{Section 2 - preliminaries} provides preliminary definitions regarding the outer Lipschitz geometry of nodal sets. Section \ref{Section 3 - lin cont orders} establishes the closed formula for computing linear contact orders. Finally, Section \ref{Sec 4 - applications} defines the associated semicomplex and semicomplex filtration, proves the realization theorem, and establishes a version of Theorem \ref{Thm: generalização do principal} for unions of two nodal sets.

\textbf{Acknowledgements.} The authors acknowledge the use of Claude AI for assistance with language editing and the improvement of the exposition of this paper. All mathematical arguments and results were reviewed and verified by the authors, who takes full responsibility for the contents of the article.

\section{Preliminaries}\label{Section 2 - preliminaries}

\begin{definition}
    Let $f\colon U \subset \R^2 \to \R$ be a function. We define the \textbf{zero level of $f$} as the set $Z(f)=\{p \in U \mid f(p) = 0\}$. Moreover, we call $f$ \textbf{harmonic} if $\frac{\partial^2 f}{\partial x^2} + \frac{\partial^2 f}{\partial y^2} = 0$.\\
\end{definition}


\begin{remark}\label{Rem: harmonic is locally real analytic}
    A relevant known fact in complex analysis is that every harmonic function can be locally written as the real part of a holomorphic function (see Theorem 2.30 in \cite{conway}), and if the harmonic function is defined in a ball (generally, a simply connected domain), the harmonic function is actually the real part of a holomorphic function. In particular, the zero level of a harmonic function is a real analytic set.
\end{remark}

\begin{definition}\label{Def: singularity}
    Let $f\colon U \subset \R^2 \to \R$ be a harmonic function. We call a point $p \in U$ a \textbf{singularity} of $f$ if $\frac{\partial f}{\partial x}(p) = \frac{\partial f}{\partial y}(p) = 0$. \\
\end{definition}

\begin{definition}\label{Def: nodal set}
    A subset $X \subseteq \R^2$ is called a \textbf{nodal set} if $X = Z(f)$, where $f\colon U \subseteq \R^2 \to \R$ is a harmonic function. \\
\end{definition}

Since it is well-known that any harmonic function is an analytic function, we have the following:
\begin{proposition}\label{Prop: analytic curve}
    If $f\colon U \subset \R^2 \to \R$ is a harmonic function, then $Z(f)$ is locally an analytic curve around every point $p\in Z(f)$.
\end{proposition}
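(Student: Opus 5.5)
The plan is to reduce the statement to the local structure of zero sets of holomorphic functions of one complex variable, using Remark \ref{Rem: harmonic is locally real analytic}. We assume $f$ is not identically zero near $p$ (otherwise $Z(f)$ contains an open set and the statement must be read with that case excluded). Fix $p\in Z(f)$ and a small disc $D$ centered at $p$ contained in $U$. Since $D$ is simply connected, there is a holomorphic $F$ on $D$ with $f=\Re F$. After translating, assume $p=0$, and write $F(z)=F(0)+\sum_{k\ge 1} a_k z^k$. Since $f(0)=0$, the constant $F(0)$ is purely imaginary. Subtracting it changes neither $\Re F$ nor holomorphy, so we may take $F(0)=0$. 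Because $f\not\equiv 0$, $F$ is not constant; let $n\ge1$ be the order of vanishing, so $F(z)=z^n g(z)$ with $g(0)\neq0$.

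The key step is to straighten $F$ by a holomorphic change of coordinates. Shrinking $D$, choose a holomorphic $n$-th root $h$ of $g$. Then $\varphi(z)=z\,h(z)$ satisfies $\varphi'(0)=h(0)\neq0$, so $\varphi$ is a biholomorphism from a neighbourhood $V$ of $0$ onto a neighbourhood $W$ of $0$, and $F=\varphi^n$. In the coordinate $w=\varphi(z)$ we get $f=\Re(w^n)$. Hence
\[
Z(f)\cap V=\varphi^{-1}\bigl(\{w\in W : \Re(w^n)=0\}\bigr).
\]
Writing $w=re^{i\theta}$, the set $\{\Re(w^n)=0\}$ is the union of the $n$ lines through the origin at angles $\theta=\frac{\pi}{2n}+\frac{k\pi}{n}$. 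Each line is the image of a real analytic map $t\mapsto te^{i\theta_k}$. So $Z(f)\cap V$ is the union of the $n$ real analytic arcs $t\mapsto\varphi^{-1}(te^{i\theta_k})$. These arcs are real analytic because $\varphi^{-1}$ is holomorphic, hence real analytic as a map of $\R^2$. They meet only at $p$, and each is smooth there.

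Two more observations complete the argument. First, $Z(f)$ is locally the zero set of the real analytic function $f$ (harmonic functions are real analytic), so it is a real analytic set. Second, the description above shows it is one-dimensional near $p$: if $n=1$ it is a smooth analytic curve through $p$, and if $n\ge 2$ it is a union of $n$ analytic branches crossing at $p$.

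The main obstacle is the normal form $F=\varphi^n$, specifically the existence of a holomorphic $n$-th root of $g$ near $0$. This follows from $g(0)\neq0$: on a small disc where $g$ avoids a ray, take $h=\exp\bigl(\tfrac1n\log g\bigr)$. After that, the inverse function theorem for holomorphic maps finishes the argument. The remaining points are routine: the choice of branch does not matter, and the case $f\equiv0$ is excluded as noted at the start.
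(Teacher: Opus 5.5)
Your proof is correct, and it takes a different and more detailed route than the paper. The paper gives no separate proof. It only notes that harmonic functions are real analytic (Remark \ref{Rem: harmonic is locally real analytic}), so $Z(f)$ is locally a real analytic set. The finer local structure is left to Lemma \ref{Lemma: Lema da Topologia de conjuntos nodais}, which is quoted from the literature.

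Your approach writes $F=\varphi^n$ with $\varphi$ a local biholomorphism and pulls back the $n$ lines $\{\Re(w^n)=0\}$. This gains two things:
\begin{itemize}
\item It actually proves the word \emph{curve}. Real analyticity alone gives only an analytic set, which can be a point: $x^2+y^2$ has an isolated zero. Your normal form shows that zeros of a nonconstant harmonic function are never isolated and that $Z(f)$ is a union of $n$ regular analytic arcs meeting only at $p$.
\item It recovers, at no extra cost, the content of Lemma \ref{Lemma: Lema da Topologia de conjuntos nodais} and Remark \ref{Rem: number of branches equals order}, which the paper cites rather than proves. The number of branches equals the vanishing order of $F$. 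Adjacent tangent directions differ by $\pi/n$, because $\varphi^{-1}$ is conformal at $0$.
\end{itemize}

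You also correctly flag the degenerate case $f\equiv 0$ near $p$, which the statement silently excludes. The paper's one-line justification is shorter, but it depends on the cited structure theorem for everything beyond ``analytic set''.
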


    \begin{remark}\label{Rem: subanalytics are pol bound}
        It is known that the set of globally subanalytic subsets of $\R^n$ is a polynomially bounded o-minimal structure (see \cite{dries} for a proof). Hence, the main theorems regarding the structure of definable o-minimal sets apply to nodal sets of harmonic functions, since these nodal sets are real analytic sets. Then the Local Conic Structure (see Theorem 4.10 in \cite{OMin}) holds for nodal sets. 
    \end{remark}

\begin{theorem}[Local Conic Structure]\label{Thm: local conic structure}
    Let $X \subseteq \R^n$ be a closed definable set in an o-minimal structure and $a \in X$. Then there is an $\varepsilon > 0$ and a definable homeomorphism $h\colon Cone(a, S_{\varepsilon}(a) \cap X )  \to \overline{B}_{\varepsilon}(a) \cap X$ such that $h(x) = x$ for $x \in S_{\varepsilon}(a) \cap X$ and $|h(x) - a| = |x - a|$. 
\end{theorem}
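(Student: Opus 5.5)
This is the Local Conic Structure theorem for closed definable sets in an o-minimal structure. I would prove it by the standard route: trivialize the distance-to-$a$ function over a small interval via Hardt-type definable triviality, then rescale radially.

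\textbf{Setup.} Consider the definable continuous function
\[
\rho\colon X\setminus\{a\}\to(0,\infty),\qquad \rho(x)=|x-a|.
\]
By the definable trivialization theorem (Hardt triviality in o-minimal structures, e.g.\ Theorem 9.1.2 in van den Dries' book, also used in \cite{OMin}), there is a finite partition of $(0,\infty)$ into points and open intervals over each of which $\rho$ is definably trivial. Choose $\varepsilon_0>0$ smaller than the first partition point, so that $\rho$ is definably trivial over $(0,\varepsilon_0)$. Fix $\varepsilon\in(0,\varepsilon_0)$.

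\textbf{Step 1: a product structure over the punctured ball.} Triviality gives a definable homeomorphism
\[
\Phi\colon (0,\varepsilon_0)\times (S_\varepsilon(a)\cap X)\to \rho^{-1}(0,\varepsilon_0)
\qquad\text{with }\rho(\Phi(t,y))=t .
\]
Composing on the fibre $t=\varepsilon$ with the inverse of $\Phi(\varepsilon,\cdot)$, we may assume $\Phi(\varepsilon,y)=y$ for all $y\in S_\varepsilon(a)\cap X$.

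\textbf{Step 2: definition of $h$.} Identify $Cone(a,S_\varepsilon(a)\cap X)$ with the set of points $a+s(y-a)$, $s\in[0,1]$, $y\in S_\varepsilon(a)\cap X$. Set
\[
h\bigl(a+s(y-a)\bigr)=\Phi(s\varepsilon,y)\quad (s>0),\qquad h(a)=a .
\]
This map is definable and bijective onto $(\overline B_\varepsilon(a)\cap X)\setminus\{a\}$ away from the vertex. It fixes $S_\varepsilon(a)\cap X$ by the normalization in Step 1. It also preserves distance to $a$, since
\[
|\Phi(s\varepsilon,y)-a|=s\varepsilon=\bigl|a+s(y-a)-a\bigr| .
\]

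\textbf{Step 3: continuity at the vertex (the main obstacle).} Away from $a$, continuity of $h$ and of $h^{-1}$ is inherited from $\Phi$. At the vertex the argument must instead use the distance-preserving property, which Hardt triviality alone does not otherwise control; fortunately it suffices here. If $p_k\to a$ in the cone, then $|h(p_k)-a|=|p_k-a|\to0$, so $h(p_k)\to a$, and the same holds for $h^{-1}$. Here closedness of $X$ guarantees that $a\in X$ is the only limit point added.

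\textbf{Step 4: conclusion.} Compactness of $S_\varepsilon(a)\cap X$ (closed and bounded) makes the cone compact, so the continuous bijection $h$ onto the Hausdorff space $\overline B_\varepsilon(a)\cap X$ is a homeomorphism.

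Alternatively, one could simply cite Theorem 4.10 in \cite{OMin} directly, as the remark preceding the statement does.
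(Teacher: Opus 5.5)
Your proof is correct. The paper does not prove this statement itself but cites Theorem 4.10 of \cite{OMin}, and your argument is exactly the standard proof of that result: Hardt triviality for $x\mapsto |x-a|$ over a small interval $(0,\varepsilon_0)$, radial rescaling of a trivialization normalized to be the identity on $S_\varepsilon(a)\cap X$, and continuity at the vertex from the distance-preserving property. One harmless inaccuracy: closedness of $X$ plays no role in Step 3, since continuity at the vertex follows from distance preservation alone, and it is used only for the compactness argument in Step 4, which is redundant because Step 3 already gives continuity of $h^{-1}$.
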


\begin{theorem}[One Dimensional Pancake Decomposition Theorem]\label{Thm: pancake decomposition}
    Let $X \subseteq \R^n$ be a definable closed curve. Given $a \in X$, there are $\Gamma_1,...,\Gamma_m$ closed definable sets such that:
    There is $\varepsilon >0$ such that \\
    $\overline{B}_\varepsilon (a) \cap X = \bigcup\limits_{k=1}^m \Gamma_k$;\\
    $\Gamma_i \cap \Gamma_j = \{a\}$ for $i \not = j$;\\
    $\Gamma_i \cap S_\delta (a)$ is a singleton for every $\delta \in [0, \varepsilon)$. 
\end{theorem}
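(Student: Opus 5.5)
The plan is to read the decomposition directly off the Local Conic Structure (Theorem \ref{Thm: local conic structure}). The link $S_\varepsilon(a)\cap X$ will be a finite set of points, and the $\Gamma_k$ will be the images under $h$ of the segments joining $a$ to those points.

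\textbf{Step 1: the link is finite.} Consider the definable function $r\colon X\to\R$, $r(x)=|x-a|$. Since $X$ is a definable curve, $\dim X\le 1$. Let $E=\{t>0 \mid \dim r^{-1}(t)\ge 1\}$; this set is definable. If $E$ were infinite, it would contain an open interval $I$, and $r^{-1}(I)$ would then have dimension at least $2$ by the additivity of dimension for definable families. That contradicts $\dim X\le 1$, so $E$ is finite.

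Hence, for all sufficiently small $\varepsilon>0$, every fibre $S_t(a)\cap X$ with $0<t\le\varepsilon$ is a definable set of dimension $\le 0$, that is, a finite set. Shrinking the $\varepsilon$ given by Theorem \ref{Thm: local conic structure} if necessary, we get $S_\varepsilon(a)\cap X=\{p_1,\dots,p_m\}$. Equivalently, one may invoke Hardt triviality of $r$ on $X\setminus\{a\}$ over $(0,\varepsilon)$ to obtain the same conclusion. This is the step I expect to require the most care: one must make sure the $\varepsilon$ from the conic structure theorem can be taken small enough that the link is $0$-dimensional, or else apply the theorem after restricting to a smaller radius.

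\textbf{Step 2: definition of the pieces.} With $h\colon Cone(a,\{p_1,\dots,p_m\})\to \overline{B}_\varepsilon(a)\cap X$ the definable homeomorphism, the cone is the union of the segments $[a,p_k]$. Set
\[
\Gamma_k=h([a,p_k]),\qquad k=1,\dots,m.
\]
Each $\Gamma_k$ is definable, being the image of a definable set under a definable map. It is closed, being the continuous image of a compact set in a Hausdorff space. Surjectivity of $h$ gives $\overline{B}_\varepsilon(a)\cap X=\bigcup_k\Gamma_k$.

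\textbf{Step 3: the remaining properties.} Distinct segments $[a,p_i]$ and $[a,p_j]$ meet only at $a$, and $h(a)=a$ because $|h(a)-a|=|a-a|=0$. Injectivity of $h$ then gives $\Gamma_i\cap\Gamma_j=\{a\}$ for $i\neq j$.

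For $\delta\in[0,\varepsilon)$, the segment $[a,p_k]$ meets $S_\delta(a)$ in exactly one point, namely $a+\frac{\delta}{\varepsilon}(p_k-a)$, where $S_0(a)=\{a\}$. Since $|h(x)-a|=|x-a|$ and $h$ is a bijection, $h$ maps $[a,p_k]\cap S_\delta(a)$ bijectively onto $\Gamma_k\cap S_\delta(a)$. Therefore $\Gamma_k\cap S_\delta(a)$ is a singleton, which completes the proof.
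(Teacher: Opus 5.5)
Your proposal is correct and follows the paper's route: the paper only remarks that the theorem is a direct consequence of the Local Conic Structure, and you supply the omitted details, namely finiteness of the link, the pieces $\Gamma_k=h([a,p_k])$, and the use of injectivity and the radius-preserving property of $h$. The point you flag in Step 1 is harmless and needs no shrinking. Indeed, $h$ maps the homothetic copy $a+\frac{t}{\varepsilon}\bigl((S_\varepsilon(a)\cap X)-a\bigr)$ bijectively onto $S_t(a)\cap X$ for every $0<t\le\varepsilon$, so finiteness at one small radius already makes $S_\varepsilon(a)\cap X$ finite.
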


The proof of this theorem is a direct consequence of the Local Conic Structure for curves. 

\begin{remark}
    The Pancake Decomposition Theorem is even more general, it states that each set $\Gamma_k$ is locally normally embedded, and it does not require $X$ to be a curve. However, this generalization will not be used in the present paper. 
\end{remark}

\begin{definition}
    We call each $\Gamma_k$ in Theorem \ref{Thm: pancake decomposition} a \textbf{semibranch}. Note that each semibranch is definably homeomorphic to $[0,\varepsilon)$. 
\end{definition}

\begin{lemma}[Analytic Curve Selection Lemma]\label{Lem: curve selection} 
    If $X$ is semianalytic in $\R^n$ and $0 \in \overline{X}$, then there is an analytic arc $\gamma\colon [0, \varepsilon) \to X$ such that $\gamma(0) = 0$ and $\gamma((0,\varepsilon)) \subset X$. 
\end{lemma}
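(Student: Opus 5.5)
We prove the Analytic Curve Selection Lemma by reducing to the classical structure theory of semianalytic sets: after a stratification, the germ at $0$ contains an analytic manifold whose closure contains $0$, and we then parametrize a one-dimensional analytic subgerm of its closure through $0$ by a Puiseux-type analytic arc. Throughout we work locally, in a small neighborhood $U$ of $0$ where $X\cap U$ is a finite union of sets of the form $\{g_1=\dots=g_r=0,\ h_1>0,\dots,h_s>0\}$ with $g_i,h_j$ analytic on $U$.

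\textbf{Step 1: reduction to a basic set.} Since $0\in\overline{X}$ and $X\cap U$ is a finite union of basic semianalytic sets, $0$ lies in the closure of one of them. It therefore suffices to treat $A=\{g=0,\ h_1>0,\dots,h_s>0\}$, where we replace the equations by the single equation $g=\sum g_i^2$.

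\textbf{Step 2: induction on dimension.} We use the fact that semianalytic sets admit a finite analytic stratification compatible with $A$ and with the sets $\{h_j=0\}$; this is a \L ojasiewicz-type result, which we take as standard. Then $0$ lies in the closure of some stratum $S\subset A$, and $S$ is a connected analytic submanifold. If $\dim S=0$, then $S=\{0\}$ up to shrinking, and $0\in A$ gives a contradiction with openness unless the arc is trivial; in that case we use instead a higher stratum accumulating at $0$. If $\dim S\ge 1$, we cut $S$ by the semianalytic set $\{x\in S:\ \text{$x$ is a critical point of } |x|^2|_S\}$. By the curve selection-type finiteness of critical values of the analytic function $|x|^2$ on $S$, this set is a semianalytic curve germ $C$ accumulating at $0$ (a one-dimensional analytic set away from $0$), and $C\subset S\subset A$.

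\textbf{Step 3: parametrization.} The closure $\overline{C}$ is a one-dimensional semianalytic germ at $0$. By the local structure of one-dimensional analytic germs (Puiseux / Newton polygon), each half-branch of $\overline{C}$ at $0$ admits a parametrization $t\mapsto\gamma(t)$ that is analytic in $t^{1/k}$. Reparametrizing by $t=s^k$ yields an analytic arc $\gamma\colon[0,\varepsilon)\to\overline{C}$ with $\gamma(0)=0$. Because the branch is not contained in the zero sets of the $h_j$ (by compatibility of the stratification), after shrinking $\varepsilon$ each $h_j\circ\gamma$ is a nonzero analytic function positive near $0^+$. Hence $\gamma((0,\varepsilon))\subset A\subset X$.

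\textbf{Main obstacle.} The hard part is Step 2: guaranteeing that a one-dimensional semianalytic subset reaching $0$ exists inside $A$. This relies on the nontrivial stratification and finiteness theory of semianalytic sets. In practice we would cite Milnor's or \L ojasiewicz's curve selection lemma rather than reprove it. Step 3 is routine Puiseux theory.
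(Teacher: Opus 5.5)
Your Step~2 fails at exactly the point where the proof depends on it. Take a stratum $S\subset A$ of dimension $\ge 1$ with $0\in\overline{S}$. Near $0$, the critical set of $\rho=|x|^2$ restricted to $S$ is not a curve accumulating at $0$: it is contained in $\{0\}$. (If $0\in S$, then $0$ is a nondegenerate, hence isolated, critical point of $\rho|_S$.)

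The finiteness statement you quote gives exactly this. If $\rho|_S$ has only finitely many critical values in some interval $(0,\delta)$, then for small $\varepsilon$ it has none in $(0,\varepsilon^2)$. So $\rho|_S$ has no critical point in $S\cap B(0,\varepsilon)\setminus\{0\}$. This is the transversality of small spheres behind the conic structure theorem, and it is usually deduced \emph{from} curve selection.

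The simplest case already shows the failure. For $A=S=\{(x,y)\in\R^2 : y>0\}$ one has $\nabla\rho=2(x,y)\neq 0$ on $S$, so your set $C$ is empty. Even where such a critical locus is nonempty, nothing in your argument makes it one-dimensional. So Step~3 has nothing to parametrize. You correctly identify producing a one-dimensional semianalytic subset of $A$ reaching $0$ as the main obstacle, and your argument does not achieve it.

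What is missing is a Lagrange-multiplier (polar) locus of \emph{two} functions, not the critical set of one. The standard argument (Milnor's) uses an auxiliary function such as $\phi=h_1\cdots h_s$, which is positive on $A$ and vanishes on $\overline{A}\setminus A$. For each small $r$ with $A\cap S_r(0)\neq\emptyset$, the maximum of $\phi$ on the compact set $\overline{A}\cap S_r(0)$ is attained at a point of $A$. Inside the stratum $S'$ containing that point, it is therefore a point where $d(\phi|_{S'})$ and $d(\rho|_{S'})$ are linearly dependent. The union of these dependency loci over the finitely many strata is semianalytic and accumulates at $0$ by construction. You must then still cut it down to dimension one, for example by induction on dimension, or by replacing $\phi$ with $\phi\, e^{\langle\lambda,x\rangle}$ for a generic $\lambda\in\R^n$ and using a transversality argument. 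Only then does your Puiseux argument in Step~3 apply.

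Two smaller points. Since $0$ need not belong to $X$, the arc should be $\gamma\colon[0,\varepsilon)\to\R^n$ with $\gamma((0,\varepsilon))\subset X$. Also, if no stratum of positive dimension accumulates at $0$, then $0\in X$ and the constant arc satisfies the statement as written, so no ``contradiction with openness'' is involved.

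Finally, the paper gives no argument for this lemma; it simply cites Lemma~2.2.3 of \cite{guillerm}. Citing Milnor or \L ojasiewicz, as your last paragraph suggests, would put you on the same footing, but then Steps~1--3 should not be presented as a proof.
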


For a proof of the analytic curve selection lemma, see Lemma 2.2.3 in \cite{guillerm}. 

\begin{definition}\label{Def: cont order}
    Let $(X,0)$ be the germ of a nodal set. Let $\Gamma_1$ and $\Gamma_2$ be two semibranches of $(X,0)$. The \textbf{contact order} of $\Gamma_1$ and $\Gamma_2$, denoted by $\Cont(\Gamma_1,\Gamma_2)$, is defined as
    $$\lim\limits_{r \to 0^+} \frac{\log d(\Gamma_1\cap S_r(0),\Gamma_2\cap S_r(0))}{\log r},$$
    which is well-defined by Theorem \ref{Thm: pancake decomposition}.
\end{definition}

\begin{remark}\label{Rem: cont order vs least exponent}
    We set, by convention, $\Cont(\Gamma_1,\Gamma_2)=\infty$ when $\Gamma_1 = \Gamma_2$. Notice that $\Cont(\Gamma_1,\Gamma_2)$, as in Definition \ref{Def: cont order}, coincides with the least exponent of the Newton-Puiseux series of $h(r) = d(\Gamma_1\cap S_r(0),\Gamma_2\cap S_r(0))$ around $0$, which exists for globally subanalytic functions (recall Remark \ref{Rem: subanalytics are pol bound}).\\
\end{remark}

\begin{remark}\label{Rem: isosceles property}
 Another important property is the isosceles property of the contact order. It states that if you have three semibranches $\Gamma_1, \Gamma_2, \Gamma_3$ and their respective contact orders, the two smallest contact orders are equal. In particular, $$Cont(\Gamma_1,\Gamma_2) \ge \min\{Cont(\Gamma_1,\Gamma_3),Cont(\Gamma_3,\Gamma_2)\}.$$ This property was first proved for the semialgebraic category in \cite{birb}.
 \end{remark}

The following Proposition provides two ways of parameterizing two semibranches without changing their contact order, i.e., they are both parametrized either by the distance to the origin or by one of the coordinates.

\begin{proposition}\label{Prop: parametrization by the first coordinate}
    
    Let 
    $\alpha_1, \beta_1$ and $\alpha_2, \beta_2$ be parametrizations of semibranches with no vertical tangent cones of distinct analytic (or definable in a real polynomially bounded o-minimal structure) curves such that $$Im(\alpha_i)=Im(\beta_i), $$for $i=1,2$ and $$||\alpha_1(t)|| = t = ||\alpha_2(t)||$$ for any small enough $t$. If $\beta_1$ and $\beta_2$ are parametrized by the first coordinate, then $||\alpha_1(t) - \alpha_2(t)||$ and $||\beta_1(t) - \beta_2(t)||$ have the same least non-vanishing exponent of the Newton-Puiseux series at the origin.
\end{proposition}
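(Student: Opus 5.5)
We reduce both distances to the same function of one variable and compare their leading exponents. Since the semibranches have no vertical tangent cones, each semibranch is the graph of a definable function over the first coordinate. Write $\beta_i(x) = (x, f_i(x))$ for $x\in[0,\varepsilon)$ (after a reflection we may assume the semibranches lie over $x\ge 0$). The functions $f_i$ are definable and have Puiseux expansions $f_i(x) = a_i x + \dots$ with finite tangent slopes $a_i$.

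First we relate the two parametrizations. Since $Im(\alpha_i)=Im(\beta_i)$, we have $\alpha_i(t) = \beta_i(x_i(t))$, where $x_i(t)$ solves $x^2 + f_i(x)^2 = t^2$. The map $\rho_i(x)=\|\beta_i(x)\| = x\sqrt{1+a_i^2}+\dots$ is definable and strictly increasing for small $x$, so $x_i = \rho_i^{-1}$ is definable with $x_i(t) = t/\sqrt{1+a_i^2} + \dots$, and $x_i$ is bi-Lipschitz near $0$.

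Next we split by the tangent directions. If $a_1\neq a_2$, both $\|\alpha_1(t)-\alpha_2(t)\|$ and $\|\beta_1(x)-\beta_2(x)\| = |f_1(x)-f_2(x)|$ are of order exactly $t$, respectively $x$. Indeed, the unit tangent directions differ, so both least exponents equal $1$ and we are done. Assume then $a_1=a_2=a$. Then $\|\beta_1(x)-\beta_2(x)\| = |f_1(x)-f_2(x)| \sim c\,x^{\lambda}$ with $\lambda\ge 1$; if $f_1 = f_2$ the curves coincide, which is excluded by distinctness. We must show that $\|\alpha_1(t)-\alpha_2(t)\|$ has order $t^{\lambda}$ as well.

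Upper bound: with $x=x_1(t)$, the point $\beta_2(x_1(t))$ lies on $\Gamma_2$ at distance at most $C t^\lambda$ from $\alpha_1(t)$. Hence its norm differs from $t$ by at most $Ct^\lambda$. So $\beta_2(x_1(t)) = \alpha_2(s)$ with $|s-t|\le Ct^\lambda$. Since $\alpha_2$ is parametrized by arc-distance to the origin and is Lipschitz (its tangent direction converges), $\|\alpha_2(s)-\alpha_2(t)\|\le C'|s-t|$. Combining these gives $\|\alpha_1(t)-\alpha_2(t)\|\le C'' t^\lambda$.

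Lower bound, the main obstacle: we need to rule out that moving along the spheres makes the points closer than the vertical gap. Write $\alpha_i(t) = (x_i(t), f_i(x_i(t)))$. Then $\|\alpha_1(t)-\alpha_2(t)\| \ge$ the distance from $\alpha_1(t)$ to the graph of $f_2$. Since $f_2$ is $C^1$ near $0$ with $|f_2'|$ bounded (it tends to $a$), this distance is bounded below by $|f_1(x_1)-f_2(x_1)|/\sqrt{1+\sup|f_2'|^2}$. With $x_1(t)\asymp t$, this is $\ge c' t^\lambda$.

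Hence both functions are definable with order exactly $\lambda$ in their respective variables. Since $x\asymp t$, the least exponents of their Newton–Puiseux series (Remark~\ref{Rem: cont order vs least exponent}) agree.
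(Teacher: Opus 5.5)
Your proof is correct. It shares the paper's skeleton: both arguments compare $\alpha_1(t)$, the point $\beta_2(x_1(t))$ of the second semibranch lying vertically above or below it, and $\alpha_2(t)$, and then transfer the exponent through $x_1(t)\asymp t$.

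The difference lies in how you obtain the key comparison $\|\alpha_1(t)-\alpha_2(t)\|\asymp |f_1(x_1(t))-f_2(x_1(t))|$. The paper applies the law of sines to the triangle with these three vertices. It then argues by contradiction that neither the angle at $\beta_2(x_1(t))$ nor the angle at $\alpha_2(t)$ can tend to $0$. The first cannot because one side is vertical while the other is a secant of $\Gamma_2$ tending to a non-vertical tangent. The second cannot because $\alpha_1(t)$ and $\alpha_2(t)$ lie on the same sphere.

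You replace this by two direct metric estimates:
\begin{enumerate}
\item For the lower bound, a point at vertical distance $\delta$ from an $L$-Lipschitz graph is at distance at least $\delta/\sqrt{1+L^2}$ from the whole graph. This does not even use the sphere condition.
\item For the upper bound, you combine $|s-t|\le \|\alpha_1(t)-\beta_2(x_1(t))\|$ with the Lipschitz continuity of $\alpha_2=\beta_2\circ x_2$.
\end{enumerate}
That Lipschitz continuity is best justified from your own setup, namely $\rho_2'\to\sqrt{1+a^2}>0$ and $|f_2'|$ bounded, rather than from convergence of the tangent direction.

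Your version gives explicit uniform two-sided constants instead of limiting-angle contradiction arguments. It also treats the transverse case $a_1\neq a_2$ explicitly, which the paper leaves implicit. The paper's angle formulation, in turn, is what it reuses in the later lemma on holomorphic maps with nonvanishing derivative, where conformality is invoked to control the angles of the corresponding triangle.
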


\begin{proof}

    We can write $$||\alpha_1(t) - \alpha_2(t)|| = at^\alpha + o(t^\alpha)$$ and $$||\beta_1(t) - \beta_2(t)|| = bt^\beta + o(t^\beta),$$ for some constants $a,b\in \R^*$. We shall prove that $\alpha = \beta$. \\

    Suppose, initially, that the parametrized semibranches have equal tangent cones, which are not vertical (first coordinate $0$), because there is an analytic parametrization by the first coordinate by hypothesis. 
    
    For a sufficiently small $t$, consider $p_1 = \alpha_1(t)=(u,v)$, $p_2 = \beta_2(u)$ and $p_3 = \alpha_2(t)$. Since we can assume the parametrizations are bijective, it follows that $\beta_1(u) = p_1$.
    
    Consider the triangle with vertices $p_1, p_2$ and $p_3$ and angles $\theta_1$, $\theta_2$ and $\theta_3$ in those respective vertices. Suppose that $\frac{||p_1 - p_3||}{||p_1 - p_2||} \to 0$, then $\frac{\sin(\theta_2)}{\sin(\theta_3)} \to 0$ by the law of sines, hence $\theta_2 \to 0$. However, since $p_2 = \beta_2(u) = (u, \Tilde{\beta}(u))$, the segment that connects $p_1$ and $p_2$ is vertical, and if $\theta_2 \to 0$, the segment $L$ that connects $p_2$ and $p_3$ must tend to a vertical one. But since $L$ is a secant of $Im(\alpha_2)$, as $t \to 0$, the direction of this secant must tend to the direction of the derivative in zero, which is not vertical. Therefore, $\theta_2 \not \to 0$.

    Analogously, suppose that $\frac{||p_1 - p_2||}{||p_1 - p_3||} \to 0$, then $\frac{\sin(\theta_3)}{\sin(\theta_2)}\to 0$, but this implies $\theta_3 \to 0$. Therefore, if $O$ is the origin of $\R^2$, then $p_2\widehat{p_3}O\to 0$ because the secant ray from $p_2$ through $p_3$ approaches the tangent cone of $\alpha_2$. Thus, as 
    $$p_1\widehat{p_3}O = \theta_3 \pm p_2\widehat{p_3}O,$$ 
it follows that $p_1\widehat{p_3}O\to 0$, which is a contradiction since $p_1$ and $p_3$ belong to the same sphere centered at $O$. 

    Therefore, the functions $h(t) = ||p_1 - p_2||$ and $g(t) = ||p_1 - p_3||$ have the same least non-vanishing exponent of the Newton-Puiseux series at the origin. Notice that $h(t) = ||\beta_1(u(t)) - \beta_2(u(t))||$ and $g(t)=at^\alpha + o(t^\alpha)$. 
    
    It only remains to prove that the exponent of $h(t)$ is $\beta$. However, $h(t) = r(u(t))$ where $r(t)= ||\beta_X(t) - \beta_Y(t)|| = bt^\beta + o(t^\beta)$. But as $\alpha_1'(0)$ is not vertical, then $u'(0) \not = 0$ and hence $u(t) = at + o(t)$. Hence $$r(u(t)) = r(at + o(t)) = b(at + o(t))^{\beta} + o(t^{\beta}) = abt^\beta + o(t^\beta).$$ This means that the least non-vanishing exponent of $r(u(t))$ is exactly the least non-vanishing exponent of $r(t)$.

\end{proof}

\begin{definition}\label{Def: angle}
    Let $\Gamma_1,\Gamma_2\colon [0,\epsilon) \to \R^2$ be germs of regular analytic maps at $0$ such that $\Gamma_1(0) = \Gamma_2(0) = p$. We define the \textbf{angle between $\Gamma_1$ and $\Gamma_2$ at $p$}, denoted by $\measuredangle(\Gamma_1,\Gamma_2)$, to be the angle between $\Gamma_1'(0)$ and $\Gamma_2'(0)$. It can be proved that $Cont(\Gamma_1, \Gamma_2) > 1$ if and only if $\measuredangle(\Gamma_1,\Gamma_2) = 0$.\\
\end{definition}

\begin{remark}\label{Rem: contato}
One can prove that the contact order between the semibranches $\{(x,y)\in \R^2\mid y = a_nx^n + a_{n+1}x^{n+1}+..., x\ge 0\}$ and $\{(x,0)\in \R^2\mid x\ge 0\}$ is exactly $n$. 
\end{remark}

The following lemma is a consequence of the invariance of contact orders under bi-Lipschitz maps, which is proved in \cite{Edson}. In order to be self-contained, we also present a proof here. 
\begin{lemma}\label{Lem: holomorohic preserves} 
    If $F\colon B(0, \varepsilon) \to \C$ is a holomorphic function such that $F'(0) \not = 0$ and $F(0)=0$, then for any two curves passing through the origin, the map $F$ preserves the contact order. 
\end{lemma}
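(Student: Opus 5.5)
The plan is to show that $F$ is bi-Lipschitz on a small ball around the origin, and then use that bi-Lipschitz maps preserve contact order. Since $F'(0)\neq 0$, the complex derivative never vanishes near $0$. By the holomorphic inverse function theorem, $F$ restricts to a biholomorphism $F\colon B(0,\delta)\to V$ onto an open neighbourhood $V$ of $0$. Shrinking $\delta$, we may assume $|F'(z)-F'(0)|\le |F'(0)|/2$ on $B(0,\delta)$, which is convex. Writing
$$F(z)-F(w)=F'(0)(z-w)+\int_0^1\bigl(F'(w+s(z-w))-F'(0)\bigr)(z-w)\,ds,$$
we obtain
$$\tfrac12|F'(0)|\,|z-w|\le |F(z)-F(w)|\le \tfrac32|F'(0)|\,|z-w|$$
for all $z,w\in B(0,\delta)$. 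So $F$ is bi-Lipschitz with constants $c=\tfrac12|F'(0)|$ and $C=\tfrac32|F'(0)|$, and $F(0)=0$.

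Next, let $\Gamma_1,\Gamma_2$ be semibranches at $0$ and set $\tilde\Gamma_i=F(\Gamma_i)$. The difficulty is that $F$ does not map spheres to spheres, so $\tilde\Gamma_i\cap S_r(0)$ is not the image of $\Gamma_i\cap S_r(0)$. Let $p_i(r)=\Gamma_i\cap S_r(0)$. The point $q_i=F(p_i(r))$ satisfies $cr\le |q_i|\le Cr$. Let $\tilde p_i(\rho)=\tilde\Gamma_i\cap S_\rho(0)$. Using that $F^{-1}$ is also bi-Lipschitz, I would argue as in the proof of Proposition \ref{Prop: parametrization by the first coordinate}: the distance from a point of a semibranch to the sphere-point of the other semibranch at comparable radius has the same leading exponent.

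Concretely, the key claim is
$$d(p_1(r),p_2(r))\ \text{and}\ \inf_{y\in\Gamma_2,\ |y|\in[r/K,Kr]}|p_1(r)-y|$$
have the same order, up to the contact-order exponent. I would prove the claim via the isosceles property (Remark \ref{Rem: isosceles property}) and the fact that a semibranch $\Gamma$ satisfies $d(\Gamma\cap S_r,\Gamma\cap S_{r'})\ge |r-r'|$. The distances involved are definable functions of $r$ (Remark \ref{Rem: subanalytics are pol bound}), so their Puiseux leading exponents exist. I would check the two cases $\mathrm{Cont}=1$ (distinct tangents) and $\mathrm{Cont}>1$ (same tangent; then arc-length reparametrizations change radii only to first order).

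Granting the claim, we get
$$d(\tilde p_1(\rho),\tilde p_2(\rho))\asymp d\bigl(F(p_1(r)),\tilde\Gamma_2\bigr)\asymp d\bigl(p_1(r),\Gamma_2\bigr)\asymp d(p_1(r),p_2(r)),$$
with $\rho\asymp r$. A linear change of variable $\rho=\lambda r+o(r)$ does not change the leading Puiseux exponent, exactly as at the end of the proof of Proposition \ref{Prop: parametrization by the first coordinate}. Taking $\log/\log r$ then gives $\mathrm{Cont}(\tilde\Gamma_1,\tilde\Gamma_2)=\mathrm{Cont}(\Gamma_1,\Gamma_2)$.

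An alternative, likely cleaner, route is available because $F$ is conformal with $F(z)=F'(0)z+O(z^2)$. Then $|F(z)|=|F'(0)||z|(1+O(|z|))$, and $F$ maps $S_r$ to a curve that is $C^1$-close to the circle of radius $|F'(0)|r$, meeting $\tilde\Gamma_i$ transversally. This makes the radius-matching error of size $O(r^2)$ along a direction transverse to the branches. I expect the main obstacle to be this sphere-mismatch step, in particular the case where the contact order exceeds $2$ and an $O(r^2)$ radial discrepancy must not contaminate the tangential distance. I would handle it by noting that the discrepancy is along the branch directions, which are nearly radial, and not across them.
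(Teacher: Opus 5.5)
Your skeleton is the same as the paper's: make $F$ bi-Lipschitz near $0$, correct for the fact that $F$ does not map spheres to spheres, and absorb $\rho=|F'(0)|r+o(r)$ into the leading Puiseux exponent. The paper handles the sphere mismatch with the triangle $F(\gamma_1(t)),F(\gamma_2(t)),F(x)$, where $|F(x)|=|F(\gamma_1(t))|$, together with conformality and the law-of-sines argument of Proposition~\ref{Prop: parametrization by the first coordinate}. Your bi-Lipschitz estimate and your final chain of $\asymp$ are fine.

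\textbf{The gap.} The sphere-mismatch step is the only non-routine part, and you do not prove it; the tools you name would not prove it either.

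In your key claim, the nontrivial direction is $d(p_1(r),p_2(r))\lesssim|p_1(r)-y|$ for $y=p_2(r')\in\Gamma_2$. From $|p_1(r)-p_2(r)|\le|p_1(r)-y|+|p_2(r')-p_2(r)|$ and $|r-r'|\le|p_1(r)-y|$, what you need is the \emph{upper} bound $|p_2(r)-p_2(r')|\le M|r-r'|$. The inequality $d(\Gamma\cap S_r,\Gamma\cap S_{r'})\ge|r-r'|$ that you quote is the trivial opposite direction. The isosceles property concerns contact orders of three semibranches, not pointwise distances at different radii, so it does not enter.

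Your alternative route has the same hole. A bound $O(r^2)$ on the radius mismatch is useless once $\mathrm{Cont}>2$, as you note. Your remedy does not help: a radial discrepancy $\delta$ between $F(p_1(r))$ and $F(p_2(r))$ already forces their distance to be at least $\delta$. So whether the discrepancy lies along the branch is irrelevant; what matters is whether $\delta=O(h(r))$.

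\textbf{Two facts close the gap.} First, the radius mismatch is controlled by $h(r)=|p_1(r)-p_2(r)|$ itself, not merely by $r^2$:
\[
\bigl||F(p_1(r))|-|F(p_2(r))|\bigr|\le|F(p_1(r))-F(p_2(r))|\le C\,h(r).
\]
Second, a definable semibranch parametrized by distance to the origin is Lipschitz in that parameter. Each coordinate of $\tilde p_2(\rho)$ is bounded by $\rho$, so its Puiseux series has exponents $\ge 1$. Hence $\tilde p_2'$ is bounded near $0$, and $|\tilde p_2(\rho)-\tilde p_2(\rho')|\le M|\rho-\rho'|$. Here you use that $F(\Gamma_2)$ is subanalytic (Remark~\ref{Rem: subanalytics are pol bound}).

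Take $\rho=|F(p_1(r))|$, so that $\tilde p_1(\rho)=F(p_1(r))$, and $\rho_2=|F(p_2(r))|$, so that $\tilde p_2(\rho_2)=F(p_2(r))$. Then
\[
|\tilde p_1(\rho)-\tilde p_2(\rho)|\le|F(p_1(r))-F(p_2(r))|+M|\rho-\rho_2|\le C(1+M)\,h(r).
\]
The same argument applied to $F^{-1}$ gives $h(r)\le c^{-1}(1+M')\,|\tilde p_1(\rho)-\tilde p_2(\rho)|$. Since $\rho=|F'(0)|r(1+O(r))$, the leading exponents coincide. This makes your key claim unnecessary, and also any appeal to conformality or to the law of sines.
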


\begin{proof}
    If $F'(0) \not = 0$, then there exists $\delta > 0$ such that $F:B(0, \delta) \to F(B(0, \delta))\subset \C$ is a bi-Lipschitz mapping, and bi-Lipschitz mappings preserves contact order. Indeed, suppose that $\gamma_1$ and $\gamma_2$ are two differentiable curves from $[0,\varepsilon)$ to $\C$ such that $\gamma_1(0)=\gamma_2(0)=0$. By Remark \ref{Rem: harmonic is locally real analytic} and Theorem \ref{Thm: local conic structure}, we may assume that $||\gamma_i(t)||=t$ for $t \in [0,\varepsilon)$ and $i=1,2$. Then $$\frac{1}{C} \le \frac{||F(\gamma_1(t)) - F(\gamma_2(t))||}{||\gamma_1(t) - \gamma_2(t)||}\le C.$$ 
So, the least exponent of $h(t) = ||F(\gamma_1(t)) - F(\gamma_2(t))||$ is the contact order of the gamma curves, because we consider the triangle $F(\Delta)$ composed of $F(\gamma_1(t))$, $F(\gamma_2(t))$, and $F(x)$, which is a point in $F(\gamma_2([0,\varepsilon))$ such that $||F(x)|| = ||F(\gamma_1(t))||$. Because $F$ is conformal in an open ball centered at the origin, the angles of $F(\Delta)$ are exactly the angles of the triangle $\Delta$ composed of $\gamma_1(t)$, $\gamma_2(t)$, and $x$. The result follows from a proof similar to Proposition \ref{Prop: parametrization by the first coordinate}. Then, it follows from Remark \ref{Rem: cont order vs least exponent} that holomorphic functions with non-vanishing derivative preserve contact order.
\end{proof}

\begin{definition}\label{Def: adjacent semibranches}
    Let $f\colon U \subset \R^2 \to \R$ be a harmonic function on an open set $U$, and let $\Gamma_1, \Gamma_2$ be two distinct semibranches of $Z(f)$. We define $\Gamma_1$ and $\Gamma_2$ as \textbf{opposite semibranches} if $C(\Gamma_1,0)\cup C(\Gamma_2,0)$ is a line.
    Moreover, we call $\Gamma_1$ and $\Gamma_2$ \textbf{adjacent} if they are not opposite and $$C(\Gamma,0) \not\subset conv.hull(C(\Gamma_1,0)\cup C(\Gamma_2,0))$$ for any other semibranch $\Gamma$ of $Z(f)$. The union of opposite semibranches of a nodal set is called a \textbf{branch}.  
\end{definition}

\begin{lemma}\label{Lemma: Lema da Topologia de conjuntos nodais}
     Let $f\colon U \subset \R^2 \to \R$ be as in Definition \ref{Def: adjacent semibranches}. Then, for every point $p \in Z(f)$, there are a neighbourhood $B$ of $p$ and a positive integer $n$ such that $Z(f) \cap B$ contains exactly $2n$ semibranches intercepting only at $p$. Moreover, each branch is an analytic regular curve, and each pair $\Gamma_1, \Gamma_2$ of adjacent semibranches satisfies that $\measuredangle(\Gamma_1, \Gamma_2) = \frac{\pi}{n}$. There is also a complex analytic homeomorphism such that $h\colon B \to B(0,1)$ and $h(\gamma_k(t)) = t e^{i\frac{(k-1)\pi}{n}}$. 
\end{lemma}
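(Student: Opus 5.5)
The plan is to reduce to a local normal form of the holomorphic function whose real part is $f$. By Remark \ref{Rem: harmonic is locally real analytic}, on a small ball $B'$ around $p$ we write $f=\Re(F)$ with $F$ holomorphic on $B'$, and after subtracting an imaginary constant we may assume $F(p)=0$. If $f$ is not identically zero near $p$ (the standing assumption for nodal sets considered here), $F$ is not constant. Hence $F(z)=(z-p)^n G(z)$ with $n\ge 1$ and $G(p)\neq 0$. Shrinking $B'$, we choose a holomorphic branch $G=g^n$ and set $\phi(z)=(z-p)g(z)$. Then $F=\phi^n$, $\phi(p)=0$ and $\phi'(p)=g(p)\neq0$, so $\phi$ is biholomorphic from a neighbourhood of $p$ onto a neighbourhood of $0$.

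Next we read off the zero set in the new coordinate $w=\phi(z)$. There $f=\Re(w^n)$, so in polar coordinates $f=r^n\cos(n\theta)$. Its zero set is the union of the $2n$ rays $\theta=\frac{\pi}{2n}+\frac{(k-1)\pi}{n}$, $k=1,\dots,2n$. These rays are the $n$ lines through $0$ with consecutive angle $\pi/n$. We then post-compose with the rotation $w\mapsto e^{-i\pi/(2n)}w$ and a dilation $w\mapsto w/\rho$, where $\rho$ is chosen so that a disc $B(0,\rho)$ lies in the image of $\phi$. Setting $B=\phi^{-1}(B(0,\rho))$ and $h(z)=e^{-i\pi/(2n)}\phi(z)/\rho$, we get a biholomorphism $h\colon B\to B(0,1)$. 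It sends $Z(f)\cap B$ onto the $2n$ radii $\{te^{i(k-1)\pi/n}\}$. Defining $\gamma_k(t)=h^{-1}(te^{i(k-1)\pi/n})$ gives the required parametrization of the semibranches, which meet only at $p$.

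The remaining claims follow from regularity and conformality of $h^{-1}$. Each branch is the preimage of a diameter, which is a line segment. Since $h^{-1}$ is biholomorphic, it is a regular analytic embedding, so each branch is a regular analytic curve. Opposite semibranches in the sense of Definition \ref{Def: adjacent semibranches} correspond to opposite radii $k$ and $k+n$. To see this, note that the tangent vector of $\gamma_k$ at $0$ is $(h^{-1})'(0)e^{i(k-1)\pi/n}$, a fixed nonzero complex multiple of $e^{i(k-1)\pi/n}$, so tangent cones are rotated and scaled uniformly. For the same reason, the angles between tangent vectors are preserved, and adjacent semibranches (consecutive $k$) make angle $\pi/n$. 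The semibranches produced here coincide, as germs, with those of Theorem \ref{Thm: pancake decomposition}, because the latter are unique as germs of connected components of $Z(f)\setminus\{p\}$ near $p$.

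The main obstacle is the $n$-th root step: extracting a holomorphic $g$ with $g^n=G$ and deducing that $\phi$ is locally invertible. This is where $G(p)\neq0$ and shrinking to a simply connected neighbourhood on which $G$ does not vanish are essential. A minor point is that $h$ is defined on a neighbourhood $B$ which is a biholomorphic preimage of a disc, not a round ball. This is harmless, since the statement only asks for some neighbourhood $B$.
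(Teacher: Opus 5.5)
Your proof is correct and complete. The normal form $F=\phi^{n}$ with $\phi'(p)\neq 0$ is the standard mechanism behind this lemma, which the paper does not prove itself but cites as Theorem 1 of \cite{estrutura_harmonica}; the map $h$ in the statement, and the fact in Remark \ref{Rem: number of branches equals order} that the number of branches equals the order of $F$, are exactly what your reduction produces. You were also right to state the assumption $f\not\equiv 0$ near $p$, which the statement leaves implicit and without which it fails.
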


For a proof of this theorem, see Theorem 1 in \cite{estrutura_harmonica}. 

\begin{remark}\label{Rem: number of branches equals order}
 A consequence of the proof of Lemma \ref{Lemma: Lema da Topologia de conjuntos nodais} in \cite{estrutura_harmonica} is that if $f(x, y) = \Re(F(z))$ with $F$ being holomorphic, then the number of branches is exactly the order of $F$, and if the least coefficient that does not vanish is $i$, then there is a branch tangent to the line $y = 0$.
 \end{remark}

\begin{remark}\label{Rem: cont ord of seminrahces of nodal set}
    The contact order of two semibranches of a nodal set is always equal to $1$.
\end{remark}

\begin{theorem}\label{Thm: simple classif}
Two nodal sets $X$ and $Y$ of $\R^2$ are outer bi-Lipschitz homeomorphic as germs at a point if and only if they have the same number of branches. More generally, if $X, Y_1,...,Y_n$ are nodal sets and $Y = \bigcup_{k = 1}^n Y_k$, then the germ of $X$ is outer bi-Lipschitz homeomorphic to the germ of $Y$ if and only if they have the same number of branches and any two different semibranches of $Y$ have a positive angle (see Remark \ref{Rem: cont ord of seminrahces of nodal set}). 
\end{theorem}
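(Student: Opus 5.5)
The plan is to use the Birbrair--Fernandes classification \cite{birb}: two germs of definable curves are outer bi-Lipschitz equivalent if and only if their Hölder semicomplexes are isomorphic. That is, there must be a bijection between semibranches preserving all pairwise contact orders. Everything then reduces to computing $sh$ for each germ involved.

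\textbf{First claim.} By Lemma \ref{Lemma: Lema da Topologia de conjuntos nodais}, a nodal set $X$ with $n$ branches has exactly $2n$ semibranches at $p$. They are pairwise transversal, with adjacent angles $\pi/n$. By Remark \ref{Rem: cont ord of seminrahces of nodal set} (equivalently, Definition \ref{Def: angle}, since distinct semibranches have positive angle), every pair of distinct semibranches has contact order $1$. Hence $sh(X,p)$ is the complete graph on $2n$ vertices with every edge labelled $1$. Two such semicomplexes are isomorphic if and only if the vertex counts agree, i.e. the numbers of branches agree. The theorem of \cite{birb} then gives the first equivalence. For the forward direction one can also argue directly: a bi-Lipschitz germ map sends semibranches to semibranches bijectively, because it is a homeomorphism of germs and semibranches are the connected components of the punctured germ. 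So the counts of semibranches match.

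\textbf{General statement.} Here $Y=\bigcup Y_k$. Each $Y_k$ is analytic, so $Y$ is a globally subanalytic curve germ. By Theorem \ref{Thm: pancake decomposition} it has finitely many semibranches. Each semibranch lies in some $Y_k$, and by Lemma \ref{Lemma: Lema da Topologia de conjuntos nodais} it is half of a regular analytic branch. Hence it has a well-defined tangent ray, and two semibranches have contact order $>1$ exactly when their tangent rays coincide (Definition \ref{Def: angle}).

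\emph{If direction.} Suppose all angles between distinct semibranches of $Y$ are positive. Then $sh(Y)$ is again the complete graph with all labels $1$. Its vertex count is twice the number of branches of $Y$, and this matches $sh(X)$ when the branch numbers coincide. Equivalence then follows from \cite{birb}.

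\emph{Only if direction.} An outer bi-Lipschitz equivalence preserves contact orders (the argument in Lemma \ref{Lem: holomorohic preserves}) and the number of semibranches. In $X$ all contact orders are $1$, so all contact orders in $Y$ are $1$. Therefore all angles are positive, and the branch counts agree.

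The main obstacle is a subtlety in the union. Branches of different $Y_k$ might coincide as sets, or share a tangent ray without coinciding. Coinciding sets must be counted once, which is why the statement is phrased via semibranches of the set $Y$. Tangent but distinct semibranches are exactly the case excluded by the positive angle hypothesis. I would also check that the semibranches of $Y$ genuinely pair into branches, i.e. lines of opposite semibranches. This holds because each regular analytic branch of a $Y_k$ contributes two opposite semibranches.
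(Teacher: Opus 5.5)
Your proposal is correct and follows the paper's proof. Both reduce, via the Birbrair--Fernandes classification \cite{birb}, to comparing Hölder semicomplexes: every edge of $sh(X)$ is labelled $1$ because distinct semibranches of a nodal set meet at positive angles, so $sh(Y)$ must have the same number of vertices with all labels $1$, i.e.\ all angles between distinct semibranches of $Y$ positive. One small correction: invariance of contact orders under a general outer bi-Lipschitz map should be credited to \cite{birb}, which already gives the ``only if'' direction, not to the argument of Lemma \ref{Lem: holomorohic preserves}, since that argument relies on conformality.
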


\begin{proof}
    Notice that it is enough to prove the more general version of the Theorem. By Theorem 4.1 in \cite{birb}, it is enough to prove that the H\"older Semicomplexes of $X$ and $Y$ are isomorphic. By Lemma \ref{Lemma: Lema da Topologia de conjuntos nodais}, any two distinct semibranches $\Gamma_1$ and $\Gamma_2$ of $X$ have $\measuredangle(\Gamma_1,\Gamma_2) > 0$. By Definition \ref{Def: cont order} this implies that $Cont(\Gamma_1, \Gamma_2) = 1$. Then, for the H\"older Semicomplex of $X$ to be isomorphic to the H\"older Semicomplex of $Y$, we need $X$ to have the same number of semibranches as $Y$, and for any two distinct semibranches $\Gamma_1', \Gamma_2'$  of $Y$, we need to have $Cont(\Gamma_1',\Gamma_2') = 1$, but this is equivalent to $\measuredangle(\Gamma_1',\Gamma_2') >0$ (see Definition \ref{Def: cont order}).
\end{proof}

\section{Linear Contact Orders of Nodal Sets}\label{Section 3 - lin cont orders}

Now we are going to find the contact order of a semibranch of a nodal set with its tangent cone at the origin (see Definition \ref{Def: linear contact order}). Along this section, we will assume that $f\colon B(0,r)\to \R$ is a harmonic function. By Remark \ref{Rem: harmonic is locally real analytic}, we have $f(x, y) = \Re(F(x + iy))$ for some holomorphic function $F:B(0, r) \to \C$, where $F(z) = c_nz^n + c_{n+1}z^{n+1}+...$ with $c_n \not = 0$. We may assume, without loss of generality, that $$F(x, y)=iz^n +c^*_{n+1}z^{n+1}+\ldots \textrm{.}$$ Indeed, otherwise, we just apply a rotation followed by a multiplication by a real positive constant (therefore, a bi-Lipschitz map, which preserves the contact orders) on $\C$ as a real vector space to make $c_n = i$ and one of the semibranches of $X$ tangent to $\{(x, 0) \in \R^2 : x > 0\}$ (see Remark \ref{Rem: number of branches equals order}).

\begin{lemma}\label{Lema: expressao das funcoes harmonicas}
    One can write $f$ as $$f = -u_{2n} + \sum_{m > n} a_mu_{1m} + b_mu_{2m}$$ for $a_m, b_m \in \R$ and $$u_{1m}(x, y) = \sum_{k = 0}^{\lfloor\frac{m}{2}\rfloor} \binom{m}{2k}(-1)^kx^{m-2k}y^{2k}$$ and $$u_{2m}(x,y)= \sum_{k = 0}^{\lfloor\frac{m - 1}{2}\rfloor} \binom{m}{2k + 1}(-1)^k x^{m - 2k - 1} y^{2k + 1},$$ 
    where the $u_{1m}$ and $u_{2m}$ are the first and second $m$-th elementary harmonic polynomials, respectively. \\
\end{lemma}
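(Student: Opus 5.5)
The plan is to expand the normalized holomorphic function and read off real parts term by term. By the normalization made just before the statement, $F(z) = iz^n + \sum_{m>n} c^*_m z^m$. I will write each coefficient as $c^*_m = \alpha_m + i\beta_m$ with $\alpha_m,\beta_m\in\R$, and use $f = \Re(F)$. Since $F$ is holomorphic on the ball $B(0,r)$, its power series converges there. Taking real parts is real-linear and continuous, so it commutes with the sum, and it suffices to compute $\Re(z^m)$ and $\Re(iz^m)$ for each $m$.

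The first step is the binomial expansion with $z = x+iy$:
$$z^m=\sum_{j=0}^m \binom{m}{j} x^{m-j} i^j y^j .$$
Splitting into even $j=2k$ (where $i^{2k}=(-1)^k$) and odd $j=2k+1$ (where $i^{2k+1}=i(-1)^k$) gives exactly
$$z^m = u_{1m}(x,y) + i\,u_{2m}(x,y),$$
with $u_{1m}, u_{2m}$ as in the statement. The summation ranges $k\le \lfloor m/2\rfloor$ and $k\le\lfloor (m-1)/2\rfloor$ are precisely the constraints $2k\le m$ and $2k+1\le m$.

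It then follows that $\Re(z^m)=u_{1m}$ and $\Re(iz^m) = -u_{2m}$. Hence
$$\Re\big(c^*_m z^m\big)=\alpha_m u_{1m}-\beta_m u_{2m}.$$
Setting $a_m=\alpha_m$ and $b_m=-\beta_m$, and noting that the leading term $iz^n$ contributes $-u_{2n}$, we obtain $f=-u_{2n}+\sum_{m>n}(a_m u_{1m}+b_m u_{2m})$. The series converges on $B(0,r)$ because $|u_{jm}(x,y)|\le |z|^m$ and the power series of $F$ converges absolutely there. Harmonicity of the $u_{jm}$ is automatic, since they are the real and imaginary parts of the holomorphic function $z^m$.

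There is no real obstacle here. The only point needing care is bookkeeping: keeping the sign conventions straight in $\Re(iw)=-\Im(w)$, and checking that the index ranges match the floors. If one wants the normalization itself justified, one would also invoke the rotation and scaling argument preceding the lemma, which makes $c_n=i$ without altering contact orders.
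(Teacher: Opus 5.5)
Your proposal is correct and follows essentially the same route as the paper. Both expand $z^m=u_{1m}+i\,u_{2m}$ by the binomial theorem and take real parts term by term with the convention $c^*_m=a_m-ib_m$ (your $b_m=-\beta_m$), while your convergence remark is a small justification that the paper leaves implicit.
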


\begin{proof}
    Since we assume $f(x, y) = \Re(iz^n + c^*_{n+1}z^{n+1}+...)$, $f(x, y) = \Re(iz^n) +\sum\limits_{k > n} \Re(c^*_kz^k)$, but if $c^*_k = a_k - ib_k$ for $a_k, b_k \in \R$, then $f(x, y) = - \Im(z^n) + \sum\limits_{k > n} a_k\Re(z^k) + b_k\Im(z^k)$, but $\Re(z^k)$ and $\Im(z^k)$ are exactly $u_{1k}$ and $u_{2k}$ respectively, by the binomial theorem. \\
\end{proof}

\begin{definition}\label{Def: linear contact order}
    Let $\Gamma$ be a semibranch of $Z(f)$ with tangent cone at the origin $C(\Gamma,0)$. We define the \textbf{linear contact order }of $\Gamma$ to be $LC(\Gamma,0) = Cont(\Gamma,C(\Gamma,0))$. Notice that the linear contact order of a semibranch could be infinite.
\end{definition}

We can now present a formula to compute the linear contact order of a semibranch in the case where its tangent cone is the non-negative part of the real axis.

\begin{theorem}\label{Teo: formula da ordem de contato} 
 Let $f(x, y) = - u_{2n} + \sum\limits_{k > n} a_ku_{1k} + b_ku_{2k}$ be as in Lemma \ref{Lema: expressao das funcoes harmonicas} and $\Gamma$ a semibranch of $Z(f)$ with tangent cone at the origin $C(\Gamma,0) = \{(x,0)\in \R^2\mid x\ge 0\}$. Then $$LC(\Gamma,0) = \inf\{k \in \N : a_k \not = 0\} - n + 1.$$ 
 We convention $\inf \emptyset = \infty$
\end{theorem}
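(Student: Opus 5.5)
The plan is to parametrize $\Gamma$ as a graph $y=g(x)$ over the positive $x$-axis and to read off the order of vanishing of $g$. Then Remark \ref{Rem: contato} gives the contact order with $C(\Gamma,0)=\{(x,0)\mid x\ge 0\}$. Proposition \ref{Prop: parametrization by the first coordinate} justifies computing the contact order with both curves parametrized by the first coordinate.

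\textbf{Step 1 (blow-up in the direction of the axis).} Set $K=\inf\{k\in\N : a_k\neq 0\}$. Substitute $y=xt$. Each $u_{1k},u_{2k}$ is homogeneous of degree $k$, so
$$f(x,xt)=x^n G(x,t),\qquad G(x,t)=-u_{2n}(1,t)+\sum_{k>n}x^{k-n}\big(a_ku_{1k}(1,t)+b_ku_{2k}(1,t)\big),$$
where $G$ is real analytic near $(0,0)$. Two facts follow from the formulas in Lemma \ref{Lema: expressao das funcoes harmonicas}. First, $u_{2k}(1,0)=0$ and $u_{1k}(1,0)=1$. Second, $\partial_t u_{2n}(1,0)=n$. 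Hence
$$G(0,0)=0,\qquad \partial_tG(0,0)=-n\neq 0,\qquad G(x,0)=\sum_{k>n}a_kx^{k-n}.$$

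\textbf{Step 2 (implicit function).} By the analytic implicit function theorem there is a unique analytic $\varphi$ with $\varphi(0)=0$ and $G(x,\varphi(x))=0$ near $0$. The curve $\{y=x\varphi(x),\,x>0\}$ lies in $Z(f)$ and is tangent to the positive $x$-axis. The semibranches of $Z(f)$ have pairwise distinct tangent directions (Lemma \ref{Lemma: Lema da Topologia de conjuntos nodais}). Moreover, any point of $\Gamma$ with small $x>0$ has $t=y/x$ small, so by the uniqueness in the implicit function theorem it lies on this graph. Therefore $\Gamma=\{(x,x\varphi(x))\mid x\ge 0\}$ as germs.

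\textbf{Step 3 (order of $\varphi$).} Write $G(x,t)=G(x,0)+tH(x,t)$ with $H$ analytic and $H(0,0)=-n$. Then
$$\varphi(x)=-\frac{G(x,0)}{H(x,\varphi(x))},$$
and the denominator is a unit. So $\varphi$ has the same order as $G(x,0)$, namely $K-n$, with leading coefficient $a_K/n$. Consequently
$$g(x)=x\varphi(x)=\tfrac{a_K}{n}x^{K-n+1}+\cdots .$$
Remark \ref{Rem: contato} then gives $LC(\Gamma,0)=K-n+1$.

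If every $a_k$ vanishes, then $G(x,0)\equiv 0$, so $\varphi\equiv 0$ by uniqueness. Then $\Gamma$ coincides with its tangent cone and $LC(\Gamma,0)=\infty$, consistent with the convention $\inf\emptyset=\infty$.

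The main obstacle is Step 2: one must ensure the graph produced by the implicit function theorem really is the semibranch $\Gamma$, and not merely some curve in $Z(f)$. This rests on the distinct tangent directions from Lemma \ref{Lemma: Lema da Topologia de conjuntos nodais} together with the local uniqueness of the solution $t=\varphi(x)$. The remaining steps are routine power-series bookkeeping.
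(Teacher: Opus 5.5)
Your proof is correct. It ends where the paper's does: $\Gamma$ is the graph $y=\tfrac{a_K}{n}x^{K-n+1}+\cdots$, and Remark \ref{Rem: contato} finishes. The mechanism in the middle, however, is different.

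The paper assumes from the outset that $\Gamma$ is an analytic graph $y=c_2x^2+c_3x^3+\cdots$, appealing to Lemma \ref{Lem: curve selection}. It substitutes this into the expanded identity $u_{2n}=a_mu_{1m}+\cdots$. It then argues by induction on $p$: the lowest-order term $nc_px^{n+p-1}$ on the left cannot be matched on the right while $n+p-1<m$. This forces $c_p=0$ for those $p$ and finally gives $nc_{m-n+1}=a_m$.

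You instead blow up along the axis, setting $y=xt$. You then apply the analytic implicit function theorem to $G=f(x,xt)/x^n$, using $\partial_tG(0,0)=-n\neq 0$. This buys two things.

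\begin{itemize}
\item The uniqueness part of the implicit function theorem identifies the whole germ of $Z(f)$ in a thin cone around the positive $x$-axis with a single analytic graph. This is exactly the step the paper handles loosely. Curve selection alone only produces some analytic arc in $Z(f)$, not a graph over $x$ that coincides with $\Gamma$.
\item The identity $\varphi=-G(x,0)/H(x,\varphi(x))$, with a unit denominator, replaces the coefficient-by-coefficient induction. It reads off the order $K-n$ and the leading coefficient $a_K/n$ in one step. It also handles the case $\inf\emptyset=\infty$ cleanly, since then $\varphi\equiv 0$ by uniqueness.
\end{itemize}

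The paper's route is more hands-on. It uses only comparison of lowest-order terms in the explicit binomial expansions of $u_{1k}$ and $u_{2k}$, but it needs the graph representation supplied from outside.
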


\begin{proof}
    Lemma \ref{Lem: curve selection} implies that the branch $\Gamma$ can be locally expressed as the graph of an analytic function, i.e., $\Gamma$ equals $y = c_1x + c_2x^2 +...$ at the origin. Since $C(\Gamma,0) = \{(x,0)\in \R^2\mid x\ge 0\}$, it follows that $c_1 = 0$. Set $m = \inf\{k \in \N : a_k \not = 0\}$. We will prove that $c_2 =...=c_{m-n} = 0$ and $c_{m - n + 1} \not = 0$.

    If $f(x, y) = 0$, then $u_{2n}(x,y)= a_mu_{1m} + \sum\limits_{r > m} a_ru_{1r} + \sum\limits_{r > n} b_ru_{2r}$. This means that
    \begin{equation}\label{Eq: cont ord}
        \begin{split}
            \sum_{k = 0}^{\lfloor\frac{n - 1}{2}\rfloor} \binom{n}{2k + 1}(-1)^k x^{n - 2k - 1} y^{2k + 1}  = a_m\sum_{k = 0}^{\lfloor\frac{m}{2}\rfloor} \binom{m}{2k}(-1)^kx^{m-2k}y^{2k} +  \\\\ \sum_{r > m} a_r\sum_{k = 0}^{\lfloor\frac{r}{2}\rfloor} \binom{r}{2k}(-1)^kx^{r-2k}y^{2k} + \sum_{r > n} b_r \sum_{k = 0}^{\lfloor\frac{r - 1}{2}\rfloor} \binom{r}{2k + 1}(-1)^k x^{r - 2k - 1} y^{2k + 1}.
        \end{split}
    \end{equation}

    By induction, consider $p$ with $2 \le p < m - n + 1$ and $y = c_px^p +...$, the least exponent term of 
    $$\binom{n}{2k + 1}(-1)^k x^{n - 2k - 1} (c_px^p + ...)^{2k + 1}$$ is $$\binom{n}{2k +1}(-1)^k c_p^{2k+1}x^{n + (p - 1)(2k + 1)}.$$
    Thus, the least exponent term of $$\sum\limits_{k = 0}^{\lfloor\frac{n - 1}{2}\rfloor} \binom{n}{2k + 1}(-1)^k x^{n - 2k - 1} (c_px^p+...)^{2k + 1}$$ is when $k = 0$, hence it is $nb_{n+ 1}c_px^{n + p - 1}$. 

    The least exponent term of $$\binom{m}{2k}(-1)^kx^{m-2k}(c_px^p+...)^{2k}$$ is $$\binom{m}{2k}(-1)^kc_p^{2k}x^{m+2k(p-1)}.$$ This means that the least exponent of $$a_m\sum\limits_{k = 0}^{\lfloor\frac{m}{2}\rfloor} \binom{m}{2k}(-1)^kx^{m-2k}y^{2k}$$ is when $k = 0$, hence it is $a_mx^m$. Applying the same argument, we find the least possible exponent term of $$\sum\limits_{r > m} a_r\sum\limits_{k = 0}^{\lfloor\frac{r}{2}\rfloor} \binom{r}{2k}(-1)^kx^{r-2k}y^{2k}$$ that is $a_{m+1}x^{m+1}$ and the least possible exponent term of $$\sum\limits_{r > n} b_r \sum\limits_{k = 0}^{\lfloor\frac{r - 1}{2}\rfloor} \binom{r}{2k + 1}(-1)^k x^{r - 2k - 1} y^{2k + 1}$$ that is $(n+1)c_px^{n+p}$. Finally, if $p < m - n + 1$, then $n + p - 1<m$, then $n + p - 1$ is smaller than every degree on the right hand side of Equation \ref{Eq: cont ord}, which implies that $c_p = 0$. \\

    It only remains to prove that $c_{m - n + 1} \not = 0$. As proved previously, $y = c_{m - n + 1}x^{m - n + 1}+...$. Thus the least exponent on the left hand side of Equation \ref{Eq: cont ord} is $nc_{m - n+1}x^m$. Analogously, the least exponent on the right hand side of Equation \ref{Eq: cont ord} is either $a_mx^m$ or $(n+1)c_{m-n+1}x^{n + (m-n+1)} = (n+1)c_{m-n+1}x^{m+1}$, hence it is $a_mx^m$. Since $a_m \not = 0$, one obtains $nc_{m-n+1} = a_m$, which implies $c_{m-n+1} = \frac{a_m}{n}$. In particular, $c_{m - n + 1} \not = 0$.\\
    
     In other words, the semibranch in question is locally $y = c_{m-n+1}x^{m-n+1} +... $ with $c_{m-n+1}\not = 0$, and then the contact order with its tangent line is exactly $m - n + 1$ (see Remark \ref{Rem: contato}). \\
\end{proof}

\begin{corollary}\label{Cor: formula geral para contato}
    Let $f = - u_{2n} + \sum\limits_{k > n} a_ku_{1k} - b_ku_{2k}$ be as in Lemma \ref{Lema: expressao das funcoes harmonicas} and $\Gamma$ a semibranch of $Z(f)$, where $C(\Gamma,0)$ is the ray from the origin with direction $e^{\frac{r\pi i}{n}}$, for some $r \in \{0,1,\ldots, 2n-1\}$. Then $$LC(\Gamma,0) = \inf\left\{ k \in \N : a_k\cos\left(\frac{r\pi k}{n}\right) \not =  b_k \sin\left(\frac{r\pi k}{n}\right) \right\} - n + 1.$$ 
\end{corollary}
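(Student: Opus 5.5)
Rotate so that the tangent ray of $\Gamma$ becomes the positive real axis, and then apply Theorem \ref{Teo: formula da ordem de contato}. Write $\omega = e^{\frac{r\pi i}{n}}$ and define $G(w) = F(\omega w)$. Writing $f = \Re(F)$, the sign convention in the Corollary corresponds to $F(z) = iz^n + \sum_{k>n} (a_k + i b_k) z^k$, since $\Re((a_k+ib_k)z^k) = a_k u_{1k} - b_k u_{2k}$. The map $w \mapsto \omega w$ is a rotation, hence an isometry. It therefore preserves contact orders (Lemma \ref{Lem: holomorohic preserves}) and tangent cones. So $\Gamma' = \omega^{-1}\Gamma$ is a semibranch of $Z(\Re G)$ whose tangent cone is the nonnegative real axis, and $LC(\Gamma,0) = LC(\Gamma',0)$.

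Next I expand $G$:
$$G(w) = i\omega^n w^n + \sum_{k>n} (a_k+ib_k)\omega^k w^k.$$
Since $\omega^n = e^{r\pi i} = (-1)^r$, the leading term is $(-1)^r i w^n$. If $r$ is even, this is already the normal form of Lemma \ref{Lema: expressao das funcoes harmonicas}. If $r$ is odd, I replace $G$ by $-G$. This does not change the zero set, and it multiplies every higher coefficient by $-1$. The condition I will check below is whether a certain real part vanishes, and that is unaffected by an overall sign.

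Now I identify the coefficient $\tilde a_k$ of $u_{1k}$ in $\Re(\pm G)$. It equals $\pm\Re\bigl((a_k+ib_k)e^{\frac{r\pi k i}{n}}\bigr)$, that is,
$$\tilde a_k = \pm\left(a_k\cos\left(\tfrac{r\pi k}{n}\right) - b_k\sin\left(\tfrac{r\pi k}{n}\right)\right).$$
Applying Theorem \ref{Teo: formula da ordem de contato} to $\pm G$ and $\Gamma'$ then gives
$$LC(\Gamma,0) = \inf\{k : \tilde a_k \neq 0\} - n + 1,$$
which is exactly the stated formula.

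The only delicate step is checking that Theorem \ref{Teo: formula da ordem de contato} really applies to the rotated function. Its hypotheses are that the normalized function has leading term exactly $-u_{2n}$ and that the semibranch is tangent to the positive real axis. The first holds thanks to the $\pm$ normalization. The second holds because, by Lemma \ref{Lemma: Lema da Topologia de conjuntos nodais} and Remark \ref{Rem: number of branches equals order}, the tangent directions of the semibranches are $e^{\frac{j\pi i}{n}}$, and the rotation maps $e^{\frac{r\pi i}{n}}$ to $1$.
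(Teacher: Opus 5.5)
Your proposal is correct and is essentially the paper's proof: rotate by $e^{\frac{r\pi i}{n}}$, note that the leading term becomes $\pm i w^n$, and apply Theorem \ref{Teo: formula da ordem de contato} to obtain the condition $\Re\bigl((a_k+ib_k)e^{\frac{rk\pi i}{n}}\bigr)\neq 0$. Your replacement of $G$ by $-G$ when $r$ is odd makes explicit the sign that the paper leaves as ``$\pm$''.
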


\begin{proof}
    Let $F(z) = iz^n + \sum\limits_{k > n} c_kz^k$. It follows that $a_k = \Re(c_k)$ and $b_k = \Im(c_k)$. Applying the change of coordinates $z\mapsto e^{\frac{r \pi i}{n}} z$ (which preserves the contact orders), we obtain a rotation of the complex plane such that $$F(e^{\frac{r \pi i}{n}} z) = \pm i z^n + \sum\limits_{k>n} c_ke^{\frac{rk\pi i}{n}} z^k.$$

    The nodal set associated with the real part of this harmonic function is the former nodal set rotated in $\frac{r\pi}{n}$ radians. Therefore, Theorem \ref{Teo: formula da ordem de contato} implies that 
    $$LC(\Gamma,0) = \inf\left\{k \in \N :\Re(c_ke^{\frac{rk\pi i}{n}}) \not =  0\right\} - n + 1,$$
    but $\Re(c_ke^{\frac{rk\pi i}{n}}) = a_k\cos\left(\frac{rk\pi}{n}\right) - b_k\sin\left(\frac{rk\pi}{n}\right)$. 
\end{proof}

\begin{corollary}\label{Cor: opposite}
    If $\Gamma_1$ and $\Gamma_2$ are opposite semibranches of $Z(f)$, then $$LC(\Gamma_1,0) =LC(\Gamma_2,0).$$ In words, opposite semibranches have the same linear contact order (see Definition \ref{Def: linear contact order}).
\end{corollary}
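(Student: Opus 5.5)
The plan is to read the claim off Corollary \ref{Cor: formula geral para contato} directly. By Lemma \ref{Lemma: Lema da Topologia de conjuntos nodais} and Remark \ref{Rem: number of branches equals order}, with the normalization $F(z)=iz^n+\sum_{k>n}c_kz^k$, the $2n$ semibranches of $Z(f)$ have tangent rays in the directions $e^{\frac{r\pi i}{n}}$, $r\in\{0,\dots,2n-1\}$. Two semibranches are opposite exactly when their tangent rays together form a line. So if $\Gamma_1$ has direction $e^{\frac{r\pi i}{n}}$, then $\Gamma_2$ has direction $-e^{\frac{r\pi i}{n}}=e^{\frac{(r+n)\pi i}{n}}$, which corresponds to the index $r'=r+n$ taken mod $2n$.

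Next apply Corollary \ref{Cor: formula geral para contato} to both semibranches. This gives
$$LC(\Gamma_1,0)=\inf\{k: \Re(c_k e^{\frac{rk\pi i}{n}})\neq 0\}-n+1$$
and the same expression for $LC(\Gamma_2,0)$, with the factor $e^{\frac{(r+n)k\pi i}{n}}=(-1)^k e^{\frac{rk\pi i}{n}}$ in place of $e^{\frac{rk\pi i}{n}}$. The reduction mod $2n$ is harmless, because the exponential is $2n$-periodic in $r$ for integer $k$. Therefore
$$\Re\bigl(c_k e^{\frac{(r+n)k\pi i}{n}}\bigr)=(-1)^k\,\Re\bigl(c_k e^{\frac{rk\pi i}{n}}\bigr),$$
and one side vanishes if and only if the other does. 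The two index sets over which the infimum is taken therefore coincide, including the case where both are empty, where both contact orders equal $\infty$. Hence $LC(\Gamma_1,0)=LC(\Gamma_2,0)$.

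The only point requiring care is the rotation step inside the proof of Corollary \ref{Cor: formula geral para contato}. After rotating by $e^{\frac{(r+n)\pi i}{n}}$, the leading term becomes $i(-1)^{r+n}z^n$, so it may have the wrong sign relative to the normalized form $-u_{2n}$ used in Theorem \ref{Teo: formula da ordem de contato}. I would handle this by multiplying $F$ by $-1$, which leaves $Z(f)$ unchanged and preserves which coefficients vanish. It is then clear that the formula depends only on whether each real part is zero, not on signs.

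There is a more geometric alternative that avoids the sign bookkeeping. The rotation $z\mapsto -z$ maps $Z(f)$ onto the nodal set of $\Re(F(-z))$ and sends $\Gamma_2$ onto a semibranch tangent to the direction of $\Gamma_1$. That alternative still requires comparing coefficients, so I would use the direct formula argument above. The main (mild) obstacle is verifying that the corollary applies to every index $r$ uniformly, including the sign normalization of the leading term.
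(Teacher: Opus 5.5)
Your proposal is correct and is essentially the paper's proof: the paper also places the two opposite semibranches at directions $e^{\frac{r\pi i}{n}}$ and $e^{\frac{(r+n)\pi i}{n}}$ and applies Corollary \ref{Cor: formula geral para contato}, and the ``trigonometric identities'' it invokes amount to your observation that $\Re\bigl(c_k e^{\frac{(r+n)k\pi i}{n}}\bigr)=(-1)^k\,\Re\bigl(c_k e^{\frac{rk\pi i}{n}}\bigr)$, so the two index sets defining the infimum coincide. Your treatment of the $\pm i z^n$ sign of the leading term (multiplying by $-1$) is consistent with how the paper handles it in the proof of that corollary.
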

\begin{proof}
    If $\Gamma_1$ and $\Gamma_2$ are opposite semibranches of $Z(f)$, then by Lemma \ref{Lemma: Lema da Topologia de conjuntos nodais} there is a positive integer $n$ such that, up to renaming the semibranches, we may assume that they have directions $e^\frac{r\pi i}{n}$ and $e^\frac{(r+n)\pi i}{n}$, respectively, for some $r\in \{0,1,\ldots 2n -1\}$. Finally, using trigonometric identities and Corollary \ref{Cor: formula geral para contato}, we obtain the desired result.
\end{proof}

\begin{remark}
    By Lemma \ref{Lemma: Lema da Topologia de conjuntos nodais}, to compute the linear contact orders of the semibranches of a nodal set $X$, we would have to compute $2n$ contact orders, for some positive integer $n$. However, Corollary \ref{Cor: opposite} implies that only half of those contact orders must be considered. Indeed, it shows that in the harmonic case, contact orders behave similar to angles. The importance of linear contact orders of $X$ will become clearer later, especially in Section \ref{Sec 4 - applications}. 
\end{remark}

\begin{remark}
    One can more easily prove Corollary \ref{Cor: opposite} using only the analyticity of the branch guaranteed by Lemma \ref{Lemma: Lema da Topologia de conjuntos nodais}.
\end{remark}

Corollary \ref{Cor: opposite} allows us to formalize the following notion. 
\begin{definition}\label{Def: branch cont order}
    The \textbf{linear contact order of a branch} is the linear contact order of any of its two (opposite) semibranches. 
\end{definition}

The next examples show the diversity we could have regarding linear contact orders of nodal sets.

\begin{example}
    The nodal set generated by $f(x, y) = -xy + x^3 - 3xy^2$ has two branches at the origin, one with infinite linear contact order (its tangent line is the vertical one) and another with finite linear contact order (its tangent line is the horizontal axis), as in Figure \ref{Fig: two branches}. 
    
    \begin{figure}[h!]
        \centering
        \includegraphics[scale=0.12]{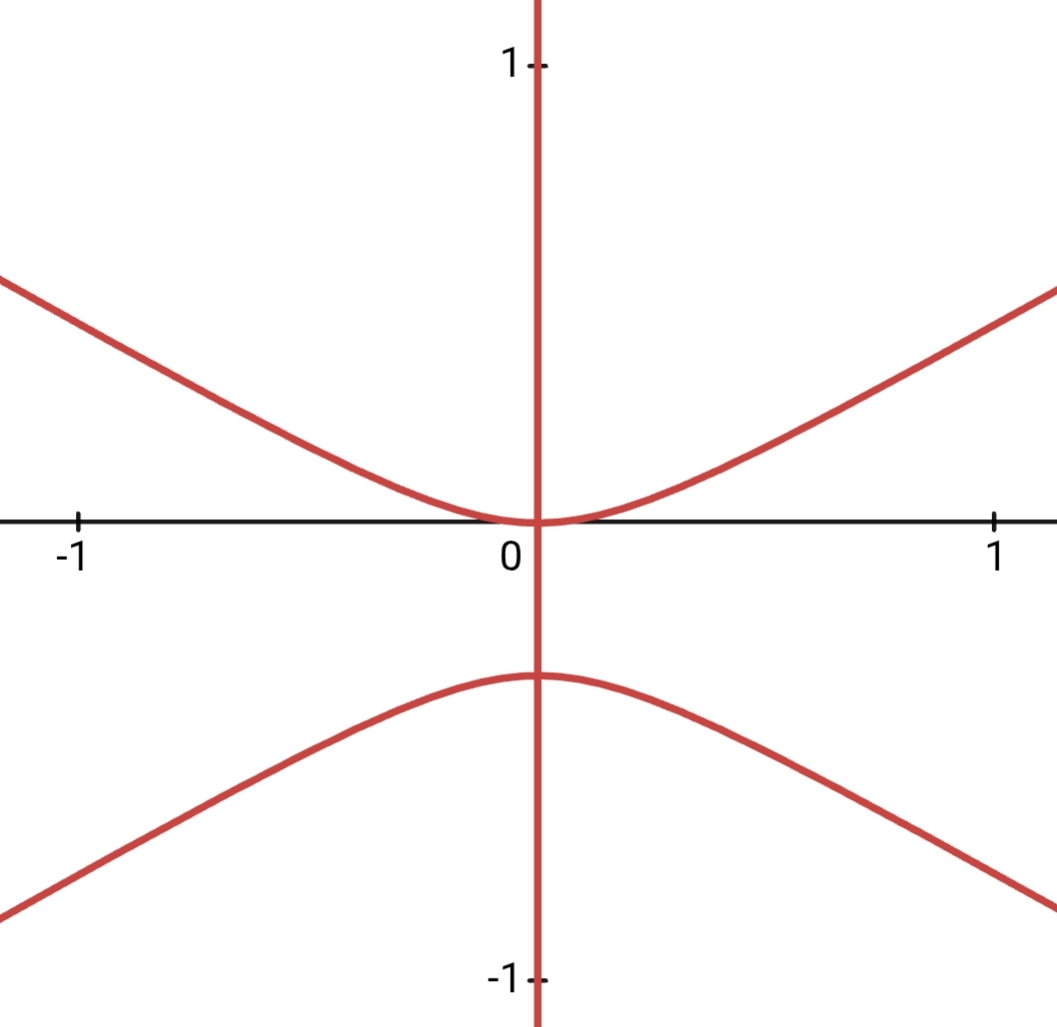}
        \caption{Nodal set germ with two branches.}
        \label{Fig: two branches}
    \end{figure}
\end{example}

As the next two examples show the set of finite linear contact orders of a nodal set is not a singleton in general.

\begin{example}
    The nodal set generated by $f(x, y) = \Re(iz^3 + (\sqrt{3} + i)z^4 + (1 + i)z^5)$ has 3 branches, one with linear contact order $2$ (its tangent line is the $x$-axis) and another with linear contact order $3$ (its tangent line in the direction $e^\frac{\pi i}{3}$), as in Figure \ref{Fig: three branches}.
    \begin{figure}[h!]
        \centering
        \includegraphics[scale=0.12]{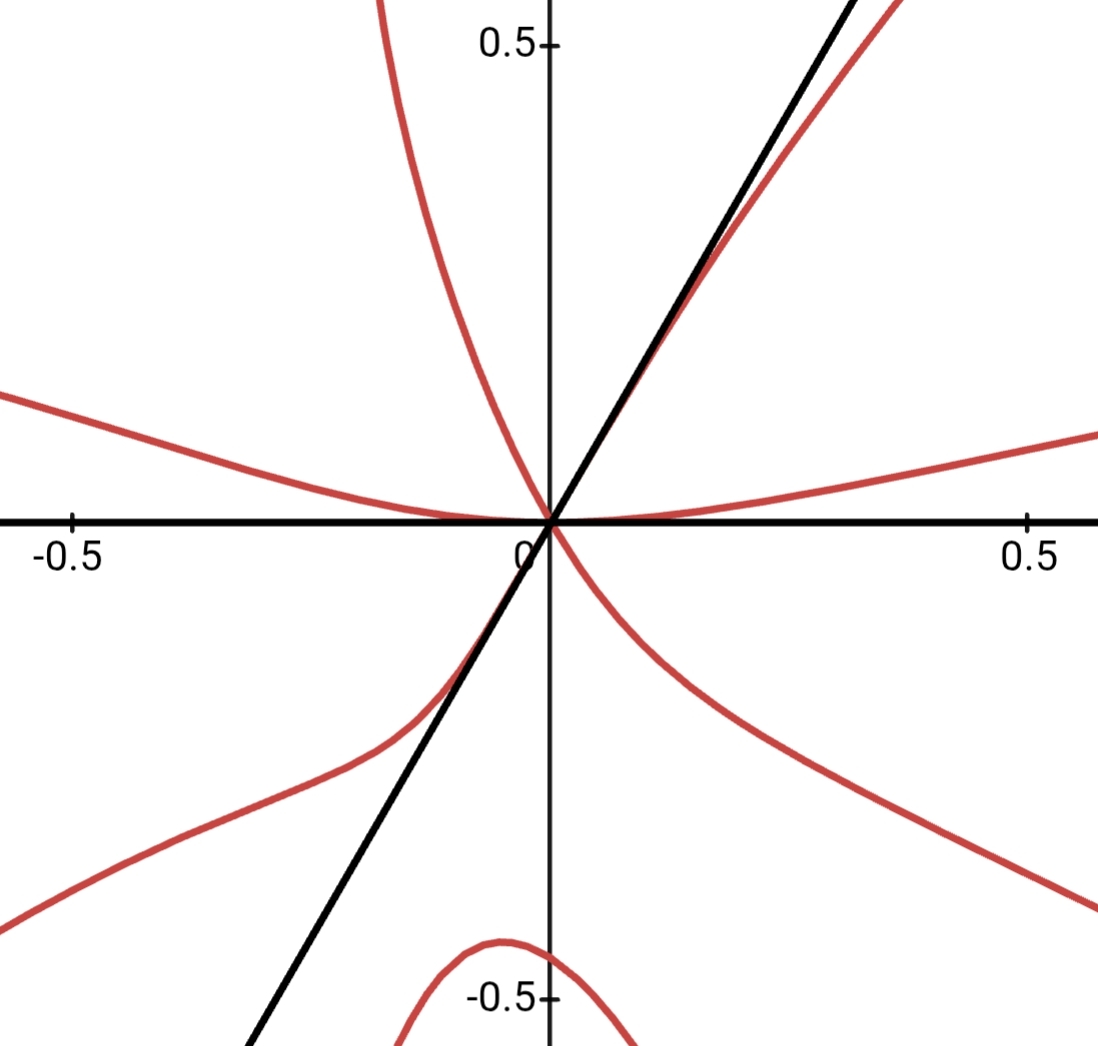}
        \caption{Nodal set germ with three branches.}
        \label{Fig: three branches}
    \end{figure}
\end{example}

Even knowing the linear contact orders of two nodal sets through Corollary \ref{Cor: formula geral para contato}, it is hard to compute the contact order between two semibranches of different nodal sets, as seen in Example \ref{Exam: union of two nodal sets} below. Unfortunately, contrary to what one might think at first, the Isosceles Property (see Remark \ref{Rem: isosceles property}) cannot be used to help us with this computation.

\begin{example}\label{Exam: union of two nodal sets}
    Consider two harmonic functions: $f_1(x,y)= -y + xy + x^2 - y^2$ (in red) and $f_2(x,y)= - y + x^2 - y^2$ (in blue), then the linear contact order of each branch is $2$, but the contact order between the two branches is not $2$. Indeed, if $$y_1 = a_2x^2 + a_3x^3 +...,$$ then $$a_2x^2 + a_3x^3+... = x(a_2x^2 + a_3x^3+...) + x^2 - (a_2x^2 + a_3x^3+...)^2,$$ and this implies $a_2 = 1$ and $a_3 = 1$. Analogously, if $$y_2 = b_2x^2 + b_3x^3 +.
..,$$ then $$b_2x^2 + b_3x^3 +... = x^2 - (b_2x^2 + b_3x^3 +...)^2,$$ which implies $b_2 = 1$ and $b_3 = 0$. This means that $y_1 - y_2$ has order $3$ by Proposition \ref{Prop: parametrization by the first coordinate}.\\

\begin{center}
    \includegraphics[scale=0.12]{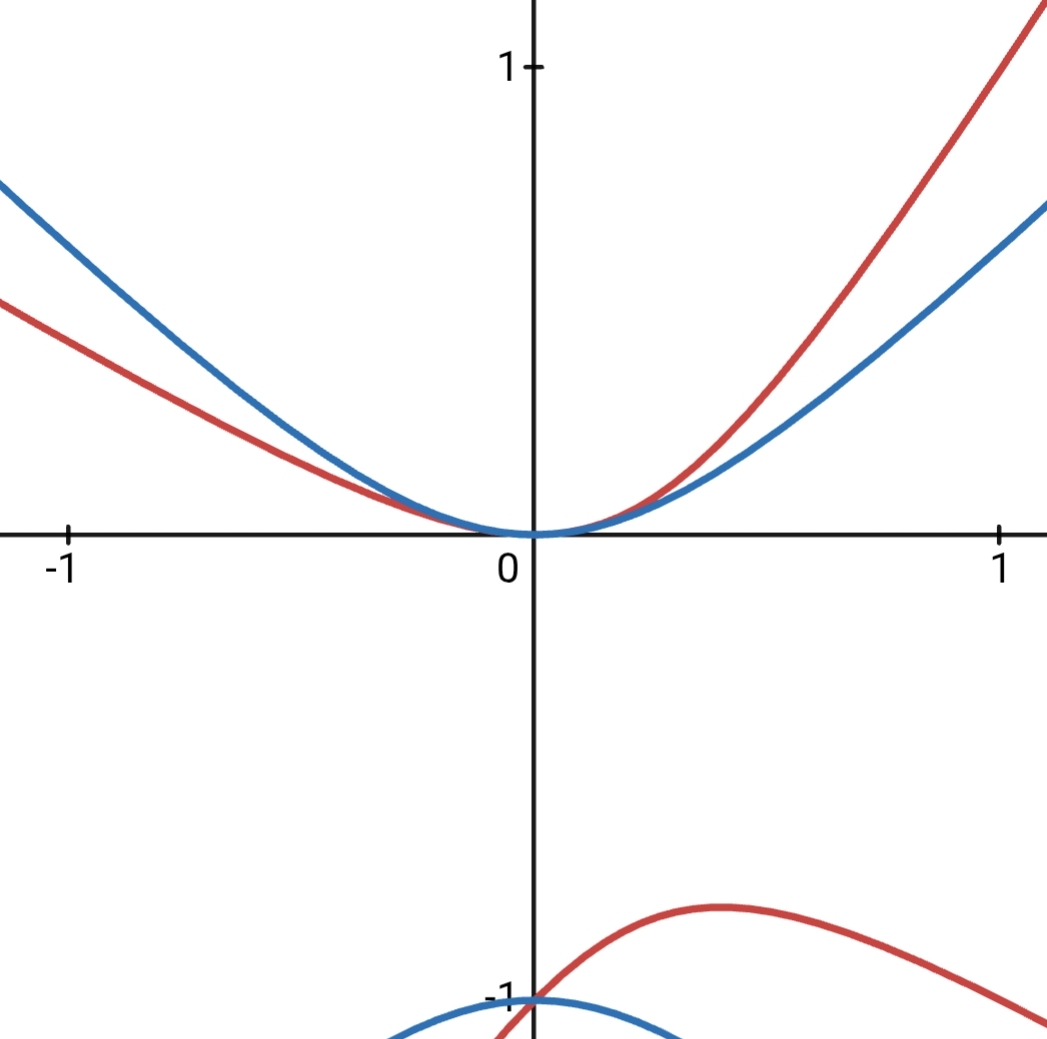}
\end{center}
\end{example}

\section{Applications of the formula for the linear contact order}\label{Sec 4 - applications}

By Theorem \ref{Teo: formula da ordem de contato}, the linear contact order of a nodal set is either a positive integer greater than 1 or infinite. We will use the following notation: $\N_{\infty} := \N \cup \{\infty\}$. 

\begin{definition}
    Denote by $\Z_n$ the set $\{0,1,...,n-1\}$ with the group structure induced by the sum modulo $n$. Let $\alpha\colon \Z_n\to \N_{\infty}$ be a function, then we call the pair $(\Z_n, \alpha)$ a \textbf{semicomplex}, if $\text{Im}(\alpha) \subseteq \{2,3,...\}\cup \{\infty\}$. Since the domain of $\alpha$ is $\Z_n$, without loss of generality, we can call $\alpha$ the semicomplex. 
\end{definition}

Let $X$ be a nodal set with $n$ branches. We assume along this section that the semibranches of $X$ are $\Gamma_0,\Gamma_1,...,\Gamma_{n-1}$, and we may assume that $\Gamma_k$ has a tangent cone in the direction of $e^{\frac{k \pi i}{n}}$. Nevertheless, up to apply a rotation, we may assume without loss of generality, that the maximum linear contact order is realized in $\Gamma_0$.

\begin{definition}\label{Def: assoc semicomplex}
    Let $X$ be a nodal set. Then the set $as(X) := (\Z_n, \alpha)$ is called the \textbf{associated semicomplex} if $\alpha\colon \Z_n \to \N_{\infty}$ is a function such that $$\alpha(k) = LC(\Gamma_k, 0).$$

    If a semicomplex is associated to a nodal set, then it is a \textbf{nodal semicomplex}
    \end{definition}

\begin{definition}
    An \textbf{isomorphism of semicomplexes} $(\Z_n, \alpha_X)$ and $(\Z_m, \alpha_Y)$ is a bijection $\sigma\colon \Z_n \to \Z_m$ such that $\alpha_Y(\sigma(k)) = \alpha_X(k)$. 
\end{definition}

By the main theorem of \cite{birb}, the associated semicomplex of a nodal set $X$ is a complete invariant for the bi-Lipschitz equivalence for $X\cup C(X,0)$, as the next proposition shows. 

\begin{proposition}\label{Prop: simple classif 2}
    Let $X$ and $Y$ be nodal sets. Their associated semicomplexes are isomorphic if and only if $X \cup C(X, 0)$ and $Y \cup C(Y, 0)$ are bi-Lipschitz equivalent.
\end{proposition}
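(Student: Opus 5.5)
The plan is to reduce the statement to Theorem 4.1 of \cite{birb}: the germs $X\cup C(X,0)$ and $Y\cup C(Y,0)$ are outer bi-Lipschitz equivalent exactly when their H\"older semicomplexes are isomorphic. It then suffices to show that the H\"older semicomplex $sh(X\cup C(X,0),0)$ and the associated semicomplex $as(X)$ determine each other up to isomorphism. I will therefore compute every contact order between semibranches of $X\cup C(X,0)$ in terms of $\alpha=as(X)$.

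First I list the semibranches. By Lemma \ref{Lemma: Lema da Topologia de conjuntos nodais}, $X$ has $2n$ semibranches $\Gamma_0,\dots,\Gamma_{2n-1}$, with $\Gamma_k$ tangent to the ray $R_k$ in direction $e^{k\pi i/n}$, and these rays are pairwise distinct. The tangent cone $C(X,0)$ is the union of the $n$ lines through these rays, so its semibranches are exactly $R_0,\dots,R_{2n-1}$. If $\alpha(k)=\infty$, then $\Gamma_k$ coincides with its tangent ray, so the two are identified. Hence $X\cup C(X,0)$ has, for each $k$, either one semibranch (when $LC(\Gamma_k,0)=\infty$) or two semibranches $\Gamma_k,R_k$ (otherwise).

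Next I compute the contact orders. Semibranches in different directions $k\neq l$ meet at a positive angle, so their contact order is $1$ by Definition \ref{Def: angle}. Within one direction, $Cont(\Gamma_k,R_k)=LC(\Gamma_k,0)$, which by Corollary \ref{Cor: opposite} equals $\alpha(k \bmod n)$. So the H\"older semicomplex is the complete graph on these vertices. Its edges have weight $1$ between different directions, and each direction $k$ with $\alpha(k)<\infty$ contributes a pair of vertices joined by an edge of weight $\alpha(k)$.

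Finally I compare the two invariants in both directions.
\begin{enumerate}
\item Suppose $\sigma$ is an isomorphism of associated semicomplexes. Then $n=m$, and $\sigma$ extends to directions via $k\mapsto\sigma(k)$ and $k+n\mapsto\sigma(k)+n$. Sending $\Gamma_k\mapsto\Gamma'_{\sigma(k)}$ and $R_k\mapsto R'_{\sigma(k)}$ gives an isomorphism of H\"older semicomplexes.
\item Conversely, given an isomorphism of H\"older semicomplexes, consider the relation ``contact order $>1$''. Its classes are the directions, each of size $1$ or $2$, so the isomorphism induces a bijection between the $2n$ directions of $X$ and those of $Y$. This gives $n=m$. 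The bijection also preserves the internal weight $\alpha$, with $\infty$ corresponding to singleton classes.
\end{enumerate}

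The main obstacle is the converse. The bijection on the $2n$ directions need not respect the pairing $k\leftrightarrow k+n$, yet I need a bijection $\Z_n\to\Z_n$ preserving $\alpha$. To get it, I will show that the multiset of values $\{\alpha(k \bmod n)\}_{k<2n}$ is twice the multiset $\{\alpha(k)\}_{k<n}$. The direction bijection preserves weights, so the two $2n$-multisets agree, and halving them shows the $n$-multisets agree. Any weight-preserving matching of these $n$-multisets is then the required $\sigma$.

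A secondary point is to justify that a semibranch with infinite linear contact order literally equals its tangent ray. This holds because the branch is analytic by Lemma \ref{Lemma: Lema da Topologia de conjuntos nodais}, so infinite flatness at the origin forces it to be linear.
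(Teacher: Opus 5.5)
Your proposal is correct and takes the same route as the paper. Both reduce, via Theorem 4.1 of \cite{birb}, to matching the H\"older semicomplex of $X\cup C(X,0)$ with $as(X)$ through $\Gamma_k\mapsto\Gamma'_{\sigma(k)}$, $R_k\mapsto R'_{\sigma(k)}$. Your version is more careful than the paper's on two points: the paper silently identifies the $2n$ semibranches with $\Z_n$ in the converse direction and ignores that $\Gamma_k=R_k$ when $\alpha(k)=\infty$, and your direction-class and multiset-halving argument closes both gaps.
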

\begin{proof}

    By the main theorem of \cite{birb}, we need only to prove that the H\"older Semicomplexes of $X' = X \cup C(X, 0)$ and $Y' = Y \cup C(Y, 0)$ are isomorphic if and only if $as(X)$ and $as(Y)$ are isomorphic. In each case, it implies that $X$ and $Y$ have the same number of branches, denoted by $n$. We will denote the semibranches of $X$ as $\gamma_0,...,\gamma_{2n-1}$ and the semibranches of $Y$ as $\gamma'_0,...,\gamma'_{2n-1}$.

    Suppose that their H\"older Semicomplexes are isomorphic and let $\sigma\colon sh(X') \to sh(Y')$ be the isomorphism. Consider $\sigma(\gamma_k) = \gamma'_{\tau(k)}$. See that $\tau\colon \Z_n \to \Z_n$ is an isomorphism of associated semicomplexes. 

    If their associated semicomplexes are isomorphic, consider $\tau\colon \Z_n \to \Z_n$ to be the isomorphism. Consider $\sigma\colon sh(X') \to sh(Y')$ as $\sigma(\gamma_k) = \gamma'_{\tau(k)}$ and $\sigma(C(\gamma_k, 0)) = C(\gamma'_{\tau(k)}, 0)$. See that $\sigma$ is an isomorphism of H\"older Semicomplexes. 
\end{proof}
 One may wonder if every semicomplex is associated with a nodal set, but the answer is no. 

\begin{example}
    Consider the semicomplex $(\Z_2, \alpha)$ such that $\alpha(0) = 3$ and $\alpha(1) = 4$. Assume, by contradiction, that $X = f^{-1}(0)$ and $as(X) = (\Z_2, \alpha)$, where $f$ is the real part of 
    
    $$F(z) = iz^2 + (a_3 + ib_3)z^3 + (a_4 + ib_4)z^4 + (a_5 + ib_5)z^5 + ...$$

     As the smallest non vanishing power of $F(z)$ is $2$, $X$ has two branches, each of them parallel to coordinate axis (see Remark \ref{Rem: number of branches equals order}). The linear contact order of the branch parallel to the $x$-axis (in the direction of $e^\frac{0\pi i}{2}$) is $$\inf\{k \in \N : a_k \not = 0\} - 1,$$ and the linear contact order of the branch parallel to the $y$-axis (in the direction of $e^\frac{\pi i}{2}$) is $$\inf\left\{k \in \N : a_k \cos\left(\frac{\pi k}{2}\right) \not = b_k \sin\left(\frac{\pi k}{2}\right)\right\} - 1.$$ Therefore, the linear contact orders with both the horizontal and vertical lines, respectively, are $\alpha(0) = 3$ and $\alpha(1) = 4$ (see Definition \ref{Def: assoc semicomplex}). On one hand,  

    $$\alpha(0) = 3 = \inf\{k \in \N : a_k \not = 0\} - 1 $$ implies that $a_4\not=0$. On the other hand,     
       $$\alpha(1) = 4 = \inf\left\{k \in \N : a_k \cos\left(\frac{\pi k}{2}\right) \not = b_k \sin\left(\frac{\pi k}{2}\right)\right\} -1$$ implies that 

    $$a_4 \cos\left(\frac{4\pi}{2}\right) =b_4 \sin\left(\frac{4\pi}{2}\right).$$

    However, this last equality brings $a_4=0$, a contradiction. This means that no nodal set is associated with the semicomplex $(\Z_2, \alpha)$.
\end{example}

Generalizing the above argument, we can prove the following proposition.

\begin{proposition}\label{Prop: maior ordem de contato}
    Let $X$ be a nodal set with $as(X) = (\Z_n,\alpha)$. Suppose that for some $s\in \Z_n$, we have $\alpha(0)s \equiv s \pmod{n}$. Then $\alpha(s) \le \alpha(0).$
\end{proposition}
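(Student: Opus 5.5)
Let $F(z)=iz^n+\sum_{k>n}c_kz^k$ with $c_k=a_k+ib_k$, normalized as in Section \ref{Section 3 - lin cont orders}, so that $\Gamma_0$ is tangent to the positive real axis and realizes the maximal linear contact order. If $\alpha(0)=\infty$ the claim is trivial, since every value of $\alpha$ is at most $\infty$. So assume $\alpha(0)=A<\infty$ and set $m=A+n-1$. By Theorem \ref{Teo: formula da ordem de contato}, $m=\inf\{k: a_k\neq 0\}$. In particular $a_m\neq 0$, and $a_k=0$ for $n<k<m$.

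The plan is to apply Corollary \ref{Cor: formula geral para contato} to the branch $\Gamma_s$, whose direction is $e^{s\pi i/n}$. This gives
$$\alpha(s)=\inf\{k: \Re(c_ke^{sk\pi i/n})\neq 0\}-n+1.$$
It therefore suffices to show that $\Re(c_me^{sm\pi i/n})\neq 0$, because then $\alpha(s)\le m-n+1=\alpha(0)$.

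The key arithmetic step uses the hypothesis $As\equiv s\pmod n$, which means $n\mid (A-1)s$. Since $m=A-1+n$, we get $sm=s(A-1)+sn$, and hence
$$\frac{sm\pi}{n}=\frac{s(A-1)\pi}{n}+s\pi\in\pi\Z.$$
Thus $e^{sm\pi i/n}=\pm1$. Then $\Re(c_me^{sm\pi i/n})=\pm a_m\neq 0$, which finishes the argument.

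The one point needing care is the sign convention: the statement of Corollary \ref{Cor: formula geral para contato} writes $-b_ku_{2k}$, while Lemma \ref{Lema: expressao das funcoes harmonicas} writes $+b_ku_{2k}$. This is harmless, because the imaginary part is multiplied by $\sin(sm\pi/n)=0$ in either convention. A second point is that the rotation in the proof of that corollary may turn the leading term into $-iz^n$. I would check that Theorem \ref{Teo: formula da ordem de contato} still applies after multiplying $F$ by $-1$; this rescales all $a_k$ by the same nonzero factor and does not change which of them vanish. I do not expect a real obstacle here: the whole proof is the observation that the index $m$ at which $a_m$ first survives is sent by the rotation to a real multiple of $a_m$.
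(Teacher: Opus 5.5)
Your proof is correct and is essentially the paper's argument: the paper assumes $\alpha(0)<\alpha(s)$, deduces $a_m\cos(sm\pi/n)=b_m\sin(sm\pi/n)$ for $m=\alpha(0)+n-1$, and then uses $n\mid sm$ to force $a_m=0$; this is just the contrapositive of your direct observation that $\Re(c_m e^{sm\pi i/n})=\pm a_m\neq 0$. The one thing to drop is your aside that $\Gamma_0$ realizes the maximal linear contact order. You never use it, and the proposition has to hold without it, because the next corollary (for general $s_0$) applies it to rotated sets whose $\Gamma_0$ need not be maximal.
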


\begin{proof}
    Notice that $\alpha(0)=\infty$ makes the thesis automatically true. So we may assume $\alpha(0) \in \mathbb{N}$. Suppose $X = f^{-1}(0)$, $f = \Re F$, and $F(z) = iz^n + \sum\limits_{k > n} (a_k + ib_k) z^k$. By definition, the linear contact order of $\Gamma_0$ is $\alpha(0)$ (see Definition \ref{Def: assoc semicomplex}). Corollary \ref{Cor: formula geral para contato} implies that 
    $$\inf\{k \in \N : a_k \not = 0\} - n + 1 = \alpha(0).$$ 
Thus $a_{\alpha(0) + n - 1} \not = 0$. Suppose, by contradiction, that $\alpha(0) < \alpha(s)$. By Corollary \ref{Cor: formula geral para contato}, this means that 
    $$\alpha(0) + n - 1 < \alpha(s) + n - 1.$$ 
    Hence, 

    $$\alpha(0) + n - 1 < \inf \left\{ m \in \N : a_m \cos\left(\frac{sm\pi}{n}\right) \not = b_m \sin\left(\frac{sm\pi}{n}\right) \right\}.$$

    The infimum above guarantees that

    $$a_{\alpha(0) + n - 1} \cos\left(\frac{s(\alpha(0) + n - 1)\pi}{n}\right) = b_{\alpha(0) + n - 1}\sin\left(\frac{s(\alpha(0)+ n - 1)\pi}{n}\right).$$

    But if $\alpha(0) s \equiv s \pmod{n}$, then $n \mid s(\alpha(0) + n - 1)$. Hence $\sin\left(\frac{s(\alpha(0) + n - 1)\pi}{n}\right) = 0$, and then $a_{\alpha(0)+ n - 1} = 0$, a contradiction.
\end{proof}

\begin{corollary}\label{Cor: generalização da maior ordem de contato}
    Let $X$ be a nodal set with $as(X)=(\Z_n, \alpha)$. If there are $s_0, s \in \Z_n$ such that $\alpha(s_0)s \equiv s\pmod{n}$, then $\alpha(s+s_0) \le \alpha(s_0)$.
\end{corollary}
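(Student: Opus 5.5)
The plan is to reduce to Proposition \ref{Prop: maior ordem de contato} by rotating the nodal set so that the branch $\Gamma_{s_0}$ becomes the branch indexed by $0$. Concretely, write $X=Z(f)$ with $f=\Re F$ and $F(z)=iz^n+\sum_{k>n}c_kz^k$. Then consider $G(z)=F(e^{\frac{s_0\pi i}{n}}z)$. As in the proof of Corollary \ref{Cor: formula geral para contato}, $G(z)=\pm i z^n+\sum_{k>n}c_ke^{\frac{s_0k\pi i}{n}}z^k$.

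If the sign is $-$, we replace $G$ by $-G$. This does not change the zero set of the real part, so after this normalization $Z(\Re G)$ is of the form required in Lemma \ref{Lema: expressao das funcoes harmonicas}. The set $Z(\Re G)$ is the image of $X$ under the rotation $z\mapsto e^{-\frac{s_0\pi i}{n}}z$. This is an isometry, hence it preserves contact orders and tangent cones, so linear contact orders are preserved. Under it, the branch of $X$ with tangent direction $e^{\frac{k\pi i}{n}}$ goes to the branch with direction $e^{\frac{(k-s_0)\pi i}{n}}$. Hence the rotated nodal set $X'$ has associated semicomplex $\alpha'(k)=\alpha(k+s_0)$, with indices read in $\Z_n$.

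There is one subtlety here. The index is modulo $n$ while the directions are modulo $2n$. Corollary \ref{Cor: opposite} says opposite semibranches have equal linear contact order, so $\alpha$ is well defined on $\Z_n$ and the shift is compatible.

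Next we apply Proposition \ref{Prop: maior ordem de contato} to $X'$. The hypothesis $\alpha(s_0)s\equiv s \pmod n$ reads $\alpha'(0)s\equiv s\pmod n$, so we get $\alpha'(s)\le\alpha'(0)$, that is, $\alpha(s+s_0)\le\alpha(s_0)$.

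There is one gap to check. The paper's standing convention says $\Gamma_0$ realizes the maximum, but the proof of Proposition \ref{Prop: maior ordem de contato} never uses that assumption. It only uses the normalization $F=iz^n+\dots$ with $\Gamma_0$ tangent to the positive real axis, together with Corollary \ref{Cor: formula geral para contato}. I would state this explicitly, or alternatively inline the argument.

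Inlining the argument goes as follows. Set $N=\alpha(s_0)+n-1$ and let $\theta_k=\frac{s_0k\pi}{n}$. By Corollary \ref{Cor: formula geral para contato}, $a_N\cos\theta_N\neq b_N\sin\theta_N$. If $\alpha(s+s_0)>\alpha(s_0)$, the same corollary would force $a_N\cos\frac{(s+s_0)N\pi}{n}=b_N\sin\frac{(s+s_0)N\pi}{n}$. Since $n\mid sN$, the angle $\frac{(s+s_0)N\pi}{n}$ differs from $\theta_N$ by an integer multiple of $\pi$. Both sides then change by the same sign, which gives a contradiction.

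The main obstacle is this bookkeeping of signs and the mod $n$ versus mod $2n$ indexing. The direct trigonometric verification above handles it cleanly, so I would present that as the proof and mention the rotation viewpoint as the conceptual reason.
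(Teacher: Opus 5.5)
Your proposal is correct and is essentially the paper's proof: rotate by $e^{-\frac{s_0\pi i}{n}}$ so that $\alpha'(k)=\alpha(k+s_0)$, then apply Proposition \ref{Prop: maior ordem de contato} to the rotated set. Your inlined trigonometric check is the same argument unrolled, and it tacitly needs $\alpha(s_0)<\infty$, the other case being trivial; you are also right that the proof of Proposition \ref{Prop: maior ordem de contato} never uses the convention that $\Gamma_0$ realizes the maximum, and the paper's argument implicitly relies on exactly this.
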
 

\begin{proof}
    Rotate the set $X$ by a factor of $e^{-\frac{s_0\pi i}{n}}$ and call it $X'$. The associated semicomplex of $X'$ is given by $(\Z_n, \alpha')$, where $\alpha'(s) = \alpha(s + s_0)$. Hence $\alpha'(0) = \alpha(s_0)$. By Proposition \ref{Prop: maior ordem de contato} applied to $X'$, it follows that $\alpha'(0)s \equiv s \pmod{n}$, which implies $\alpha'(s) \le \alpha'(0)$. Therefore, in terms of $\alpha$, we obtain that $\alpha
    (s_0)s \equiv s \pmod{n}$ implies $\alpha(s + s_0) \le \alpha(s_0)$.
\end{proof}

Let $X$ be a nodal set with $as(X)=(\Z_n, \alpha)$ and $s_0 \in \Z_n$. The condition $$\alpha(s_0)s \equiv s \pmod{n}$$ is equivalent to 
    $$n \mid s(\alpha(s_0) - 1),$$
which is equivalent to $$\frac{n}{\gcd(n,\alpha(s_0)-1)} \mid s.$$

    Let us denote by $\langle p\rangle$ the cyclic subgroup of $(\Z_n, +)$ generated by $p$. We now define: 
    
    \begin{definition}\label{Def: H's}
        For each $m\in \Z_n$ we define $$H_{m} := \left\langle \frac{n}{\gcd(n,\alpha(m)-1)}\right\rangle,$$

        if $\alpha(m)$ is finite and $H_m = \Z_n$ if $\alpha(m)$ is infinity.
    \end{definition}
    Definition \ref{Def: H's} allows us to reformulate Corollary \ref{Cor: generalização da maior ordem de contato} as item $(1)$ of Corollary \ref{Cor: resto 1 mod n é máxima}.

\begin{corollary}\label{Cor: resto 1 mod n é máxima}
    Let $X$ be a nodal set with $as(X)=(\Z_n, \alpha)$. 
    \begin{enumerate}
        \item For each $s_0 \in \Z_n$, if $s \in s_0 + H_{s_0} =: \{s_0 + h\mid h\in H_{s_0}\}$, then $\alpha(s)\le \alpha(s_0)$.
        \item If there is $k \in \Z_n$ such that $\alpha(k) \equiv 1 \pmod{n}$, then $\alpha(k) = \max\limits_{0 \le j \le n-1}\alpha(j)$.
    \end{enumerate}
\end{corollary}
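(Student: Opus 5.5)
Both items follow from Corollary \ref{Cor: generalização da maior ordem de contato} together with the reformulation of its hypothesis given just before Definition \ref{Def: H's}. No new analytic input is needed; we only translate the divisibility condition into membership in the subgroups $H_{s_0}$.

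For item (1), fix $s_0\in\Z_n$ and let $s\in s_0+H_{s_0}$. Write $s=s_0+h$ with $h\in H_{s_0}$. We split into two cases.

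\emph{Case $\alpha(s_0)=\infty$.} Then $H_{s_0}=\Z_n$, and the inequality $\alpha(s)\le\alpha(s_0)$ is trivial.

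\emph{Case $\alpha(s_0)$ finite.} Put $d=\frac{n}{\gcd(n,\alpha(s_0)-1)}$. Since $h$ lies in $H_{s_0}=\langle d\rangle$, we have $d\mid h$, understood in $\Z_n$. Because $d$ divides $n$, this is well defined on residues. By the chain of equivalences stated before Definition \ref{Def: H's}, $d\mid h$ is equivalent to $n\mid h(\alpha(s_0)-1)$, that is, to $\alpha(s_0)h\equiv h\pmod n$. Corollary \ref{Cor: generalização da maior ordem de contato}, applied with the pair $(s_0,h)$, then gives $\alpha(h+s_0)\le\alpha(s_0)$, which is $\alpha(s)\le\alpha(s_0)$.

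The only point needing care is that the equivalence $n\mid s(\alpha-1)\iff \frac{n}{\gcd(n,\alpha-1)}\mid s$ holds. It follows from the standard fact $n\mid ab\iff \frac{n}{\gcd(n,a)}\mid b$.

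For item (2), suppose $\alpha(k)\equiv1\pmod n$ with $\alpha(k)$ finite. Then $n\mid\alpha(k)-1$, so $\gcd(n,\alpha(k)-1)=n$ and the generator of $H_k$ is $n/n=1$. Hence $H_k=\langle 1\rangle=\Z_n$, and therefore $k+H_k=\Z_n$.

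If instead $\alpha(k)=\infty$, then $\alpha(k)$ is trivially the maximum, and $H_k=\Z_n$ holds by definition in any case.

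Applying item (1) with $s_0=k$ gives $\alpha(j)\le\alpha(k)$ for every $j\in\Z_n$. Since $\alpha(k)$ is itself one of the values $\alpha(j)$, it equals $\max_{0\le j\le n-1}\alpha(j)$.

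I expect no real obstacle here. The mildest subtlety is handling $\alpha=\infty$ consistently with the convention $H_m=\Z_n$, together with the congruence interpretation of $\infty\equiv 1$, which we can simply treat as trivial.
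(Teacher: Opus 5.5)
Your proposal is correct and follows the paper's route. Item (1) is the rotated form of Proposition \ref{Prop: maior ordem de contato} (the corollary just before Definition \ref{Def: H's}), rewritten via $n\mid h(\alpha(s_0)-1)\iff \frac{n}{\gcd(n,\alpha(s_0)-1)}\mid h$; item (2) follows by noting that $H_k=\Z_n$ when $n\mid \alpha(k)-1$ and then applying item (1), with your explicit treatment of $\alpha=\infty$ simply spelling out what the paper leaves implicit.
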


\begin{proof}
    It only remains to prove $(2)$. If $n \mid \alpha(k) - 1$, then $\frac{n}{\gcd(n,\alpha(k)-1)} = 1$, hence $H_k = \Z_n$ and then $\alpha(s) \le \alpha(k)$ for every $s \in \Z_n$.
\end{proof}

\begin{remark}
    Item $(2)$ of Corollary \ref{Cor: resto 1 mod n é máxima} not only shows how to construct semicomplexes that are not associated with any nodal set, but also shows that if the linear contact order is equivalent to $1 \pmod{n}$, where $n$ is the number of branches, then this contact order is the largest one. 
\end{remark}

\begin{example}\label{Exmp: exemplo do calculo dos H}
    Consider $F(z) = iz^6 + iz^9 + iz^14 + (1 + 2i)z^{15}$ and $X = f^{-1}(0)$ where $f = \Re F$. Let $as(X) = (\Z_6, \alpha)$. Then the values of $\alpha:\Z_6 \to \N$ are given in the following table:

\begin{table}[ht]
\centering
\begin{tabular}{c*{6}{c}}
\\
\toprule
$s$ & $0$ & $1$ & $2$ & $3$ & $4$ & $5$ \\
\midrule
$\alpha(s)$ & $10$ & $4$ & $9$ & $4$ & $9$ & $4$ \\
\bottomrule
\\
\end{tabular}
\caption{Linear contact orders of the branches of $X$} 
\label{tab:example-4.11}
\end{table}

    This example will motivate Theorem \ref{Thm: distribution of the LCO's in nodal sets}, since we will make this result explicit in a particular case.

    Consider $s_1 = 1$ and $s_2 = 2$. Let us determine the subgroups defined in Definition \ref{Def: H's}.  Indeed, $$H_{s_1} = \left\langle \frac{6}{\gcd(6,\alpha(1) - 1)}\right\rangle = \langle 2 \rangle.$$ Similarly, $$H_{s_2} = \left\langle \frac{6}{\gcd(6,\alpha(2) - 1)}\right\rangle = \langle 3 \rangle.$$ 
    
    Notice that $\alpha(s_1) < \alpha(s_2) < \alpha(0)$ and $\Z_6 \supset H_{s_1} \supset H_{s_1} \cap H_{s_2}$. Consequently, 

    $$\alpha(s) = 4  \iff s \in H_{s_1}^C=\{1,3,5\};$$
    $$\alpha(s) = 9 \iff s\in H_{s_1} \cap H_{s_2}^C = \{2,4\};$$
    $$\alpha(s)=  10 \iff s \in H_{s_1} \cap H_{s_2}=\{0\}.$$
\end{example}

\begin{theorem}\label{Thm: distribution of the LCO's in nodal sets}
    Let $X$ be a nodal set with $as(X) = (\Z_n, \alpha)$. Suppose $X$ has exactly $r+1>1$ distinct linear contact orders satisfying 

    $$\alpha(s_1) < \alpha(s_2) <...<\alpha(s_r) < \alpha(0).$$

    Then $\alpha(s) = \alpha(s_k)$ if, and only if, $s \in \bigcap\limits_{j = 1}^{k-1} H_{s_j} \cap H_{s_k}^C$. Moreover, $\alpha(s) = \alpha(0)$ if, and only if, $s \in \bigcap\limits_{j = 1}^{r} H_{s_j}$.
\end{theorem}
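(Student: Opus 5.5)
The plan is to work directly with the closed formula of Corollary \ref{Cor: formula geral para contato}, rather than only with Corollary \ref{Cor: resto 1 mod n é máxima}, and to prove by induction on $k$ the statement
$$(\ast)_k:\qquad \alpha(s)\ge \alpha(s_k)\iff s\in \bigcap_{j=1}^{k-1} H_{s_j}.$$
Both claims of the theorem follow from $(\ast)_k$. Because the values of $\alpha$ are exactly $\alpha(s_1)<\dots<\alpha(s_r)<\alpha(0)$ and nothing lies strictly between consecutive ones, we have $\alpha(s)=\alpha(s_k)$ iff $\alpha(s)\ge\alpha(s_k)$ and not $\alpha(s)\ge \alpha(s_{k+1})$, where we put $\alpha(s_{r+1}):=\alpha(0)$. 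So $(\ast)_k$ and $(\ast)_{k+1}$ give the characterization of the level set of $\alpha(s_k)$. The last claim is $(\ast)_{r+1}$.

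\textbf{Setup.} As in the standing convention, $\Gamma_0$ realizes the maximum, and we normalize $F(z)=iz^n+\sum_{m>n}(a_m+ib_m)z^m$ so that $\Gamma_0$ is tangent to the positive real axis. By Theorem \ref{Teo: formula da ordem de contato}, $a_m=0$ for all $m<M:=\alpha(0)+n-1$ (if $\alpha(0)=\infty$, all $a_m$ vanish). Hence, for indices $m<M$, the condition in Corollary \ref{Cor: formula geral para contato} for the semibranch $s$ reduces to $b_m\sin(sm\pi/n)\neq 0$, that is, $b_m\ne0$ and $n\nmid sm$. Set $m_j:=\alpha(s_j)+n-1<M$ for $j\le r$. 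Then
$$\alpha(s)=\min\{m<M:\ b_m\neq 0,\ n\nmid sm\}-n+1,$$
with the convention that the minimum is $M$ if this set is empty. Applying this to $s=s_j$ gives $b_{m_j}\neq0$ and $n\nmid s_jm_j$.

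\textbf{Key translation and induction.} For finite $\alpha(s_j)$ we have $n\mid s\,m_j$ iff $n\mid s(\alpha(s_j)-1)$, since $m_j\equiv \alpha(s_j)-1 \pmod n$. As computed before Definition \ref{Def: H's}, this holds iff $s\in H_{s_j}$. The base case $(\ast)_1$ is trivial because $\alpha(s_1)$ is the minimum. For the inductive step, assume $(\ast)_k$ and take $s$ with $\alpha(s)\ge\alpha(s_k)$. By the displayed formula, every index $m<m_k$ with $b_m\neq0$ satisfies $n\mid sm$. Therefore $\alpha(s)>\alpha(s_k)$ iff the index $m_k$ also fails to count for $s$. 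Since $b_{m_k}\ne0$, this happens iff $n\mid s m_k$, i.e. iff $s\in H_{s_k}$. Combining this with $(\ast)_k$ and the absence of intermediate values yields $(\ast)_{k+1}$.

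\textbf{Expected obstacle.} The main delicate point is the reduction in the setup: one must make sure that the terms $a_m\cos(sm\pi/n)$ play no role below $M$. This is where the choice of $\Gamma_0$ as the branch of maximal contact order, together with the normalization of $F$, is essential. One must also check that the sign change $\pm iz^n$ produced by the rotation in Corollary \ref{Cor: formula geral para contato} does not affect the vanishing conditions, which it does not, since only whether each term is zero matters.
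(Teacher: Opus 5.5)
Your proof is correct and rests on the same ingredients as the paper's: the closed formula of Corollary \ref{Cor: formula geral para contato}, the vanishing $a_m=0$ for $m<M=\alpha(0)+n-1$ given by Theorem \ref{Teo: formula da ordem de contato} applied to $\Gamma_0$, and the translation $n\mid s m_j \iff s\in H_{s_j}$. The difference is only organizational, and it works in your favor: the paper first rewrites $F$ in the sparse form $iz^n+\sum_{j} i z^{m_j}+(a_M+ib_M)z^M+\cdots$ without justifying why other nonzero $b_m$ with $m<M$ can be discarded, whereas your induction on the threshold sets $\{s:\alpha(s)\ge\alpha(s_k)\}$ handles such coefficients automatically.
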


\begin{proof}
    First, notice that we may assume $\alpha(0)<\infty$, once the the case $\alpha(0)=\infty$ follows analogously. Second, assume that $X=f^{-1}(0)$ where $f = \Re(F)$ and $F(z) = iz^n + \sum\limits_{k > n} (a_k + ib_k) z^k$.
    Let us define, for each $1\le j\le r$, $$m_j = \alpha(s_j) + n - 1.$$ Notice that $$\alpha(s_1) < \alpha(s_2) <\cdots<\alpha(s_r) < \alpha(0)$$ implies $$m_1<m_2<\cdots < m_r <\alpha(0) + n - 1.$$ 
    
    Using the convention on $\alpha$ which guarantees that $\alpha(0)$ is the linear contact order with the nonnegative part of the $x$-axis allows us to apply Theorem \ref{Teo: formula da ordem de contato}, which implies that we can write $F(z)$ as $$F(z) = iz^n + ib_{m_1}z^{m_1}+...+ib_{m_r}z^{m_r} + (a_{\alpha(0) + n - 1} + ib_{\alpha(0) + n - 1})z^{\alpha(0) + n - 1} + \cdots,$$
    since we can ignore the nonzero $b_k$ for $k > \alpha(0) + n - 1$ when they do not interfere in the contact order formula. 
    By Corollary \ref{Cor: formula geral para contato}, the linear contact orders depend only on the argument of the $k$-th coefficient of the power series of $F$ centered at zero, hence we may assume that each $b_{m_j} = 1$ for each $1\le j\le r$.

    Finally, let us consider the implications of the theorem. Adding $n-1$ to both sides of the equality $\alpha(s) = \alpha(s_k)$, we obtain through Corollary \ref{Cor: formula geral para contato} that it is equivalent to 
    
    $$ \inf\left\{l \in \N :a_l \cos\left( \frac{s l \pi}{n} \right) \not = b_l \sin\left(\frac{s l\pi}{n}\right) \right\} = m_k, $$

    which is equivalent to 

    $$\sin\left(\frac{sm_l\pi}{n}\right) = 0, $$ 

    since $a_{m_l} = 0$ for every $l = 1,2,...,{k-1}$, and 
    
    $$\sin\left(\frac{sm_k\pi}{n}\right) \ne 0.$$
    Moreover, these last conditions are equivalent to $n \mid sm_l = s(\alpha(s_l) + n - 1)$ for every $l=1,2,...,k-1$ and $n \nmid sm_k = s(\alpha(s_k) + n - 1)$. 

    However, recall that $n \mid s(\alpha(s_l) - 1)$ is equivalent to $s \in H_{s_l}$, and $n \nmid s(\alpha(s_k) - 1)$ is equivalent to $s \not \in H_{s_k}$ (see Definition \ref{Def: H's}). Therefore, the statement of the theorem follows. 
\end{proof}

Theorem \ref{Thm: distribution of the LCO's in nodal sets} shows that the distribution of linear contact orders is determined by a filtration of $\Z_n$. 

\begin{definition}
    Let $X$ be a nodal set satisfying the hypotheses of Theorem \ref{Thm: distribution of the LCO's in nodal sets}. Let $H_{s_m}$ as in Definition \ref{Def: H's}, for each $m = 1,2,...r$. We define the \textbf{$m$-th subgroup of contact of $X$} to be the set $K_1 = \Z_n$ if $m=1$ or

    $$K_m = \bigcap_{k = 1}^{m-1} H_{s_k}$$

    for $m = 2,...,r+1$. 

    The filtration 

    $$\Z_n = K_1 \supsetneq K_2 \supsetneq\cdots \supsetneq K_r \supsetneq K_{r+1}$$

    is called the \textbf{contact filtration of $X$}. 
\end{definition}

\begin{remark}\label{Rem: Teorema principal reescrito em filtracoes}
    Theorem \ref{Thm: distribution of the LCO's in nodal sets} shows that $\alpha$ is constant along $K_{r+1}$ and $K_m \setminus K_{m+1}$ for each $m=1,\ldots, r$. For different $m$, this constant valuation is different, then the values of $\alpha$ are determined by its contact filtration. 
\end{remark}

\begin{example}
    Recall Example \ref{Exmp: exemplo do calculo dos H}, let us see how Theorem \ref{Thm: distribution of the LCO's in nodal sets} can be used to simplify the calculation of $\alpha$ if we already know that $\alpha(0)$ is the largest contact order. 

    Since each $H_{s_j}$ is a subgroup of $\Z_n$, it is either $\Z_n$ or generated by a divisor of $n$. Then we need only to calculate $\alpha(d)$ for any $d$ such that $d \mid n$. 

    If $F(z) = iz^6 + iz^9 + iz^14 + (1 + 2i)z^{15}$, $n = 6$, then we need only to calculate $\alpha(1) = 4$, $\alpha(2) = 9$, and $\alpha(3) = 4$.
    
    By Example \ref{Exmp: exemplo do calculo dos H}, we know that $$H_1 = \langle 2\rangle = \{0, 2, 4\}$$ and $$H_2 = \langle 3 \rangle = \{0, 3\}.$$ Therefore, $$K_1 = \Z_6 \supsetneq K_2 = H_1 \supsetneq K_3 = H_1 \cap H_2 = \{0\}.$$ By Theorem \ref{Thm: distribution of the LCO's in nodal sets}, $\alpha(s) = \alpha(1)$ if, and only if,
    $$s \in K_1 \setminus K_2 = \{0,1,2,3,4,5\} \setminus \{0,2,4\} = \{1,3,5\}.$$ 
    Then $\alpha(1) = \alpha(3) = \alpha(5) = 4$. We also have $\alpha(s) = \alpha(2) = 9$ if, and only if, 
    $$s \in K_2 \setminus K_3 = \{0,2, 4\} \setminus \{0\} = \{2,4\},$$ 
which implies that $\alpha(2) = \alpha(4) = 9$. Since $K_3 = \{0\}$, $\alpha(0) = 10$ is the unique maximum linear contact order. 

    Generalizing this algorithm, one can easily calculate all the linear contact orders of any nodal set. 
\end{example}

\begin{corollary}
    Let $X$ be a nodal set with $as(X)=(\Z_n, \alpha)$. For each $s \in \Z_n$, we have $$\min \{\alpha(s), \alpha(s+1)\}= \min\limits_{0 \le j \le n - 1} \alpha(j).$$ In words, the minimal linear order necessarily occurs in each pair of adjacent semibranches.
\end{corollary}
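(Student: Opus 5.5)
We argue through the contact filtration given by Theorem \ref{Thm: distribution of the LCO's in nodal sets}. If $\alpha$ is constant there is nothing to prove. Otherwise, list the distinct values as $\alpha(s_1)<\cdots<\alpha(s_r)<\alpha(0)$. The minimum is then $\alpha(s_1)$. By that theorem, $\alpha(s)=\alpha(s_1)$ holds exactly when $s\in K_1\setminus K_2=\Z_n\setminus H_{s_1}$. So the claim reduces to a group-theoretic one: for every $s\in\Z_n$, the two elements $s$ and $s+1$ cannot both lie in $H_{s_1}$.

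The key step is to show $H_{s_1}\neq\Z_n$. This follows from Corollary \ref{Cor: resto 1 mod n é máxima}(1). If $H_{s_1}$ were all of $\Z_n$, then $s_1+H_{s_1}=\Z_n$, so $\alpha(s)\le\alpha(s_1)$ for every $s$. That would make $\alpha(s_1)$ the maximum, contradicting $\alpha(s_1)<\alpha(0)$. Note that when $\alpha(s_1)=\infty$ we have $H_{s_1}=\Z_n$ by definition, but that case cannot occur, because then $\alpha(s_1)$ would be the maximum.

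Hence $H_{s_1}=\langle d\rangle$ with $d=n/\gcd(n,\alpha(s_1)-1)$ a divisor of $n$ satisfying $d>1$. Suppose both $s$ and $s+1$ lie in $H_{s_1}$. Since $H_{s_1}$ is a subgroup, their difference $1$ also lies in it, so $H_{s_1}=\Z_n$, which is a contradiction. Therefore at least one of $s$, $s+1$ lies in $\Z_n\setminus H_{s_1}$, where $\alpha$ equals the minimum. Since that value is the global minimum, $\min\{\alpha(s),\alpha(s+1)\}=\min_j\alpha(j)$.

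The only delicate point is the assumption, used in Theorem \ref{Thm: distribution of the LCO's in nodal sets}, that the maximum is attained at $0$. This is the convention adopted in this section and holds after a rotation. A rotation shifts indices but preserves adjacency of $s$ and $s+1$ modulo $n$, so the statement is unaffected.
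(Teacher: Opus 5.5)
Your proof is correct and follows the same route as the paper. You reduce via Theorem \ref{Thm: distribution of the LCO's in nodal sets} to the fact that the minimum is attained exactly on $\Z_n\setminus H_{s_1}$, and then observe that two consecutive elements cannot both lie in a proper subgroup. The only differences are minor: you justify $H_{s_1}\neq\Z_n$ through item (1) of the corollary bounding $\alpha$ on $s_0+H_{s_0}$, whereas the paper notes that $s_1$ itself must lie in $H_{s_1}^C$ by the theorem; you also make explicit the constant case and the rotation convention, which the paper leaves implicit.
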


\begin{proof}
    Let us start by assuming, without loss of generality, that we arranged the distinct linear contact orders of $X$ so that  $\alpha(s_1)<\alpha(s_2)<...<\alpha(s_r)<\alpha(0)$. Notice that $H_{s_1} \ne \Z_n$. Indeed, otherwise Theorem \ref{Thm: distribution of the LCO's in nodal sets} implies that $\alpha(s) = \alpha(s_1)$ if, and only if, $s \in H_{s_1}^C = \Z_n^C$, a contradiction with $s=s_1\in \Z_n$. 

Moreover, for each $s \in \Z_n$, we cannot have $s, s+1\in H_{s_1}$, otherwise, $1=(s+1) - s \in H_{s_1}$, but $H_{s_1} \ne \Z_n$. Therefore, for each $s \in \Z_n$, one of the numbers $s, s+1$ is in $H_{s_1}^C$, which implies that $\alpha(s)$ or $\alpha(s+1)$ is $\alpha(s_1)$, which is the least contact order. 
\end{proof}

\begin{corollary}
    Let $X$ be a nodal set with $as(X) = (\Z_n, \alpha)$. Given a positive integer $n$, let $\Omega(n) = e_1 + \cdots + e_r$, where $n = \prod_{j=1}^rp_j^{e_j}$ and $p_1,\ldots, p_r$ are distinct prime numbers.  Then $1+\Omega(n)$ is an upper bound on the number of distinct linear contact orders, i.e., $$|\text{Im}(\alpha)|\le 1 + \Omega(n).$$
\end{corollary}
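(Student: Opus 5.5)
The plan is to reduce the count of distinct values of $\alpha$ to the length of a strictly decreasing chain of subgroups of $\Z_n$. If $|\text{Im}(\alpha)|=1$ there is nothing to prove, since $1\le 1+\Omega(n)$. Otherwise write $|\text{Im}(\alpha)|=r+1$ with $r\ge 1$. After the rotation convention of this section, the maximum is attained at $\Gamma_0$, so Theorem \ref{Thm: distribution of the LCO's in nodal sets} applies with
\[
\alpha(s_1)<\cdots<\alpha(s_r)<\alpha(0).
\]
It produces the contact filtration
\[
\Z_n = K_1 \supsetneq K_2 \supsetneq\cdots \supsetneq K_r \supsetneq K_{r+1},
\qquad K_m=\bigcap_{k=1}^{m-1} H_{s_k}.
\]

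The first step is to check that each inclusion in this filtration is genuinely strict and that each $K_m$ is a subgroup of $\Z_n$. Each $K_m$ is a subgroup because it is an intersection of the cyclic subgroups $H_{s_k}$. For strictness, Theorem \ref{Thm: distribution of the LCO's in nodal sets} shows that $s_m\in K_m\setminus K_{m+1}$ for $m=1,\dots,r$. Indeed, $\alpha(s_m)=\alpha(s_m)$ forces $s_m\in \bigcap_{j<m}H_{s_j}\cap H_{s_m}^C$, so $K_m\setminus K_{m+1}\neq\emptyset$. In addition, $0\in K_{r+1}$, so $K_{r+1}$ is a nonempty subgroup and we do not need to pass to the trivial group separately.

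The second step is the group-theoretic bound. By Lagrange's theorem, a strict inclusion of subgroups $K_{m+1}\subsetneq K_m$ of the finite group $\Z_n$ satisfies $|K_m| = [K_m:K_{m+1}]\,|K_{m+1}|$ with index at least $2$. Hence $|K_m|/|K_{m+1}|$ is an integer divisor $\ge 2$, and so contributes at least one prime factor counted with multiplicity. Telescoping gives
\[
n=|K_1| = |K_{r+1}|\prod_{m=1}^{r}[K_m:K_{m+1}].
\]
Therefore $\prod_{m=1}^r [K_m:K_{m+1}]$ divides $n$ and is a product of $r$ integers each $\ge 2$. Since $\Omega$ is completely additive and $\Omega(d)\ge 1$ for $d\ge 2$, we get $r\le \Omega\bigl(\prod_m [K_m:K_{m+1}]\bigr)\le \Omega(n)$. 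This yields $|\text{Im}(\alpha)|=r+1\le 1+\Omega(n)$.

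The only delicate point is the first step. Applying Theorem \ref{Thm: distribution of the LCO's in nodal sets} requires its normalization that $\alpha(0)$ is the maximum, which is handled by the standing rotation convention via Corollary \ref{Cor: generalização da maior ordem de contato}. We also must include the case $\alpha(0)=\infty$, in which case $H$ for that value is $\Z_n$ and does not enter any $K_m$. Once strictness of the filtration is justified through the witnesses $s_m$, the rest is routine.
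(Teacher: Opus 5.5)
Your proof is correct and follows essentially the same route as the paper. Both arguments use the witnesses $s_m\in K_m\setminus K_{m+1}$ from Theorem \ref{Thm: distribution of the LCO's in nodal sets} to obtain a strictly decreasing chain of subgroups of $\Z_n$, and then count prime factors. The paper does the count through generators $d_i$ of $K_i$ with $d_i\mid d_{i+1}$ and $d_i<d_{i+1}$, which is the same as your index computation since $[\Z_n:\langle d\rangle]=d$ for $d\mid n$; your telescoping version states precisely the paper's informal ``add one prime at a time'' step.
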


\begin{proof}
    Suppose that $\alpha(s_1)<\alpha(s_2)<...<\alpha(s_k)<\alpha(0)$ are the linear contact orders of $X$. Our aim is to bound the number $k$. By Theorem \ref{Thm: distribution of the LCO's in nodal sets}, there are subgroups $H_{s_1},H_{s_2},...,H_{s_k}$ of $\Z_n$ generated by divisors of $n$ such that for each $l = 1, 2,...,k$, the set $$\bigcap\limits_{j=1}^{l-1} H_{s_j} \cap H_l^C$$ is nonempty, once at least $s_l$ belongs to it. We shall prove that $\Omega(n)$ is the maximum number of such subgroups, i. e., $k\le \Omega(n)$. Consider the chain of subgroups given by
    $$\Z_n \supseteq H_{s_1} \supseteq H_{s_1}\cap H_{s_2} \supseteq \cdots \supseteq \bigcap_{l = 1}^k H_{s_l}$$
    Each subgroup of this chain is generated by a divisor of $n$. Let $d_i$ be the generator of $$K_i = \bigcap_{l = 1}^{i-1} H_{s_l}.$$ The inclusions guarantee that $d_i \mid d_{i+1}$ for each $1\le i\le k-1$. Since $$\bigcap\limits_{j=1}^{l-1} H_{s_j} \cap H_l^C$$
    is nonempty for each $l$, we have $K_i \supsetneq K_{i+1}$ for each $i$. Nevertheless, this implies that $d_{i}<d_{i+1}$. Hence $d_{i+1}$ has at least one more prime number in its decomposition into a product of primes. Therefore, the strategy to maximize $k$ is, when constructing the divisors $d_1,\ldots,d_k$, from $d_i$ to $d_{i+1}$, to add exactly one prime number to the expansion of $d_{i+1}$, but we can add at most $\Omega(n)$ prime numbers. 
\end{proof}

\begin{theorem}\label{Thm: realization theorem}
    Let $(\Z_n, \alpha)$ be a semicomplex such that $$Im(\alpha) = \{\alpha(s_1),\ldots,\alpha(s_r),\alpha(0)\}$$ and

    $$\alpha(s_1) < \alpha(s_2) <...< \alpha(s_r) < \alpha(0).$$

    Moreover, for each $m = 1,2,...,r$, let $L_m = \frac{n}{\gcd(n, \alpha(s_m) - 1)}$ and consider the sets $$K'_1 = \left\langle L_1\right\rangle^C , \hspace{20pt}  K'_m = \bigcap_{k = 1}^{m - 1} \left\langle L_k\right\rangle \cap \left\langle L_m\right\rangle^C, \; m = 2,...,r, \hspace{20pt} K'_{r+1} = \bigcap_{k = 1}^{r} \left\langle L_k\right\rangle,$$ and assume that $s_m \in K'_m$ for each $m = 1,2,...,r$.
    
    If $\alpha$ is constant along each of the sets $K'_1,\ldots,K'_{r+1}$, then there is a nodal set $X$ such that $as(X) = (\Z_n, \alpha)$. 
\end{theorem}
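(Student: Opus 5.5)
The plan is to realize $\alpha$ by an explicit polynomial, mimicking the normal form found in the proof of Theorem \ref{Thm: distribution of the LCO's in nodal sets}. Set $m_j = \alpha(s_j) + n - 1$ for $j=1,\dots,r$ and $M = \alpha(0)+n-1$ when $\alpha(0)<\infty$. Take
$$F(z) = iz^n + i z^{m_1} + i z^{m_2} + \cdots + i z^{m_r} + \varepsilon\, z^{M},$$
with $\varepsilon = 1$ if $\alpha(0)$ is finite, and drop the last term if $\alpha(0)=\infty$. Let $f=\Re F$ and $X=Z(f)$. By Remark \ref{Rem: number of branches equals order}, $X$ has exactly $n$ branches, and we label them $\Gamma_s$ with tangent direction $e^{s\pi i/n}$, as in the paper's convention.

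The key step is to compute $LC(\Gamma_s,0)$ for each $s\in\Z_n$ using Corollary \ref{Cor: formula geral para contato}. There $a_k=\Re(c_k)$, $b_k=\Im(c_k)$, and the $k$-th test is whether $\Re(c_k e^{sk\pi i/n})\neq 0$. For the terms $iz^{m_j}$ this reads $\sin(s m_j\pi/n)\ne 0$, i.e. $n\nmid s m_j$. Since $m_j\equiv \alpha(s_j)-1 \pmod n$, this is $n\nmid s(\alpha(s_j)-1)$, i.e. $s\notin\langle L_j\rangle$. For the term $z^M$ the test is $\cos(sM\pi/n)\neq 0$.

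So $LC(\Gamma_s,0)=m_k-n+1=\alpha(s_k)$ precisely when $s\in\langle L_1\rangle\cap\cdots\cap\langle L_{k-1}\rangle$ and $s\notin\langle L_k\rangle$, that is, when $s\in K'_k$. For $s\in K'_{r+1}$ all the $\sin$ tests vanish and only the last term remains. We need $\cos(sM\pi/n)\ne 0$, and this is where care is required. Since $s\in K'_{r+1}$ we only control $\sin$ at the $m_j$, not at $M$. The fix is to replace $\varepsilon z^M$ by $c z^M$ with $c\in\C$ chosen so that $\Re(c e^{sM\pi i/n})\ne0$ for every $s\in K'_{r+1}$. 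This is a finite set of real-linear conditions on $c$, each excluding a line, so a generic $c$ works (this is the "avoid degenerate trigonometric values" step). With this choice $LC(\Gamma_s,0)=\alpha(0)$ on $K'_{r+1}$. If $\alpha(0)=\infty$, we omit the term and get $LC=\infty$ there.

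Finally, the sets $K'_1,\dots,K'_{r+1}$ partition $\Z_n$, since they are the successive differences of the descending chain of intersections. The hypothesis $s_m\in K'_m$ together with the constancy of $\alpha$ on each $K'_m$ gives $\alpha\equiv\alpha(s_m)$ on $K'_m$. Also $0\in K'_{r+1}$, so $\alpha\equiv\alpha(0)$ there. Hence $as(X)=(\Z_n,\alpha)$.

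One point needs checking: the indices must satisfy $n<m_1<\cdots<m_r<M$. This holds because the $\alpha$-values are at least $2$ and strictly increasing. It is also what allows the infimum in Corollary \ref{Cor: formula geral para contato} to be read off term by term.

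The main obstacle I expect is bookkeeping the normalization. The labeling of semibranches and the sign convention in Corollary \ref{Cor: formula geral para contato} (the $\pm i z^n$ after rotation, and $a_k\cos$ versus $b_k\sin$ with its sign) must be handled consistently. I would verify on Example \ref{Exmp: exemplo do calculo dos H} that the $K'_m$ computed from $F$ match.
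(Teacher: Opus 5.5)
Your proposal is correct and follows the paper's construction. You use the same polynomial $F=iz^n+\sum_j iz^{\alpha(s_j)+n-1}$ plus a top term $cz^{\alpha(0)+n-1}$, with $c$ chosen generically (in the paper, $c=1+ib$ with $b$ generic) to avoid the finitely many degenerate values for $s\in K'_{r+1}$, and you read off the contact orders from Corollary~\ref{Cor: formula geral para contato}. The one difference is that you evaluate the formula directly at every $s\in\Z_n$, using that $K'_1,\dots,K'_{r+1}$ partition $\Z_n$; the paper checks only the $s_m$ and then invokes Theorem~\ref{Thm: distribution of the LCO's in nodal sets} for constancy on each $K'_m$, so your direct route is slightly cleaner.
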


\begin{proof}
Notice that $s_m \in K'_m$ implies that $s_m \in \langle \frac{n}{\gcd(n, \alpha(s_j) - 1)}\rangle$, for $1\le j < m$. Hence $\frac{n}{\gcd(n, \alpha(s_j) - 1)} \mid s_m$, which implies that $n \mid s_m(\alpha(s_j) - 1)$ for every $1\le j < m$. Similarly, $s_m \not \in \langle \frac{n}{\gcd(n, \alpha(s_m) - 1)}\rangle$ implies that $n \nmid s_m(\alpha(s_m) - 1)$. 

In what follows, consider the function $f(z) = \Re F(z)$ and the nodal set $X = f^{-1}(0)$ with $as(X) = (\Z_n, \beta)$. We will choose a suitable $F(z)$ for each case.

    If $\alpha(0) = \infty$, consider $$F(z) = iz^n + iz^{\alpha(s_1) + n - 1} +iz^{\alpha(s_2) + n - 1}+\cdots+iz^{\alpha(s_r) + n - 1}.$$ We shall prove that $\beta = \alpha$. By Corollary \ref{Cor: formula geral para contato}, $$\beta(s_m) + n - 1 = \inf\left\{k \in \N : a_k\cos\left(\frac{s_mk\pi}{n}\right) \ne b_k\sin\left(\frac{s_mk\pi}{n}\right)\right\}.$$ Thus, since $a_k=0$ for any $k$, $\alpha(s_m) +n - 1 = \beta(s_m) + n - 1$ is equivalent to 
    $$\sin\left(\frac{s_m(\alpha(s_m) + n - 1)\pi}{n}\right) \ne 0,$$

    $$\sin\left(\frac{s_m(\alpha(s_j)+n-1)\pi}{n}\right) = 0,$$

    for $j<m$. Once $n \mid s_m(\alpha(s_j) - 1)$ for $1\le j < m$ and $n \nmid s_m(\alpha(s_m) - 1)$, then $\beta(s_m) = \alpha(s_m)$. By Theorem \ref{Thm: distribution of the LCO's in nodal sets}, the function $\beta$ is constant in each $K'_m$, since $\beta(s) = \beta(s_m) = \alpha(s_m)$ if, and only if, $s \in \bigcap\limits_{k = 1}^{m - 1} H_{s_j} \cap H_m^C$. By definition, $H_{s_j} = \langle \frac{n}{\gcd(n, \beta(s_j) - 1)}\rangle = \langle \frac{n}{\gcd(n, \alpha(s_j) - 1)}\rangle = \langle L_j\rangle$. Therefore, $\beta(s) = \beta(s_m)$ if, and only if $s \in \bigcap\limits_{k = 1}^{m - 1} \langle L_k\rangle  \cap \langle L_m\rangle ^C = K'_m$. Hence $\beta$ is constant over each $K'_m$, for $m = 1,2,...,r$ and $\beta(s) = \infty$ if, and only if $s \in K'_{r+1}$, which implies that $\beta = \alpha$. 

    If $\alpha(0) < \infty$, consider $$F(z) = iz^n + iz^{\alpha(s_1) + n - 1} +iz^{\alpha(s_2) + n - 1}+\cdots+iz^{\alpha(s_r) + n - 1}$$
    \vspace{-1cm}
    $$+(a_{\alpha(0) + n - 1} + ib_{\alpha(0) + n - 1})z^{\alpha(0) + n - 1}.$$ By Corollary \ref{Cor: formula geral para contato}, the linear contact orders of $X$ depend only on the angle of the coefficients of the power series of $F$, hence we may assume that $a_{\alpha(0) + n - 1} = 1$, since $a_{\alpha(0) + n - 1} \ne 0$. 

    For each $s \in K'_{r+1}$, let $Y_s$ be the set of $y \in \R$ such that the equality $$\cos\left(\frac{s(\alpha(0) + n - 1)\pi}{n}\right) = y\sin\left(\frac{s(\alpha(0) + n - 1)\pi}{n}\right)$$ holds. Each $Y_s$ is at most countable, since the trigonometric function $\cot$ is periodic. This implies that $\bigcup_{s=1}^rY_s$ is countable. Let us choose $b = b_{\alpha(0) + n - 1} \in \bigcap_{s=1}^rY_s^C$. Let us show that $\beta = \alpha$. 

    See that, as before, $\beta(s_m) = \alpha(s_m)$ and $\beta$ is constant in each $K_m$ for $m=1,2,...,r$. It remains to be proved that $\beta(0) = \alpha(0)$ and that $\beta$ is constant in $K_{r+1}$.

    By Corollary \ref{Cor: formula geral para contato}, $\beta(0) = \inf\{ k \in \N : a_k \ne 0\} - n + 1$, since this infimum is $\alpha(0) + n - 1$, we have proved that $\beta(0) = \alpha(0)$.

    Let $s \in K_{r+1}$, since $b$ is chosen so that 
    $$\cos\left(\frac{s(\alpha(0) + n - 1) \pi}{n}\right) \ne b\sin\left(\frac{s(\alpha(0) + n - 1)\pi}{n}\right)$$

    and $s \in \bigcap\limits_{k = 1}^r \langle \frac{n}{\gcd(n, \alpha(k) - 1)}\rangle$ means that $n \mid s(\alpha(k) - 1)$, then 

    $$\sin\left(\frac{s(\alpha(k)+ n - 1)\pi}{n}\right) = 0$$

    for every $k=1,2,...,r$, which implies that $\beta(s)= \alpha(0)$ for $s \in K_{r+1}$. Therefore, $\beta = \alpha$. And then, in each case, $as(X) = (\Z_n, \alpha)$. 
\end{proof}

\begin{corollary}
    Given an integer $n\ge 1$, there is a nodal set $X$ with $as(X) =(\Z_n, \alpha)$ such that $|\text{Im}(\alpha)| = 1 + \Omega(n)$. In other words, $1 + \Omega(n)$ is a sharp bound for the number of possible contact orders of nodal sets.
\end{corollary}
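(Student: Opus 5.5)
The plan is to build, for a given $n$, a semicomplex with exactly $1+\Omega(n)$ values that satisfies the hypotheses of Theorem \ref{Thm: realization theorem}, and then let that theorem produce the nodal set. Write $n=p_1p_2\cdots p_k$ as a product of primes listed with multiplicity, so $k=\Omega(n)$; the case $n=1$ is trivial, taking $F(z)=iz$ and $\alpha(0)=\infty$. Put $d_0=1$ and $d_j=p_1\cdots p_j$, so that
$$\Z_n=\langle d_0\rangle\supsetneq\langle d_1\rangle\supsetneq\cdots\supsetneq\langle d_k\rangle=\{0\}$$
is a strictly decreasing chain of length $k$.

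The key step is to choose contact orders $\alpha_1<\alpha_2<\cdots<\alpha_k$ such that $L_m:=n/\gcd(n,\alpha_m-1)=d_m$ for every $m$. This amounts to requiring $\gcd(n,\alpha_m-1)=n/d_m$. The natural choice is $\alpha_m-1=(n/d_m)\,t_m$ with $t_m$ coprime to $n$; for example $t_m\equiv 1 \pmod n$ and large. The exponents $t_m$ are then increased successively so that $\alpha_m\ge 2$ and the sequence is strictly increasing.

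With these values, $\bigcap_{j<m}\langle L_j\rangle=\langle d_{m-1}\rangle$, because the subgroups are nested. Hence
$$K'_m=\langle d_{m-1}\rangle\setminus\langle d_m\rangle,\qquad K'_{k+1}=\{0\}.$$
Each $K'_m$ with $m\le k$ is nonempty, since it contains $s_m:=d_{m-1}$. We then define $\alpha\equiv\alpha_m$ on $K'_m$ and $\alpha(0)=\infty$. The sets $K'_1,\dots,K'_{k+1}$ partition $\Z_n$, so $\alpha$ is well defined and constant on each piece. Moreover $s_m\in K'_m$, and $\alpha(s_1)<\cdots<\alpha(s_k)<\alpha(0)$.

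It remains to confirm that the sets $K'_m$ built from $\alpha$ agree with those in Theorem \ref{Thm: realization theorem}. This holds because $L_m$ depends only on $\alpha(s_m)=\alpha_m$. Theorem \ref{Thm: realization theorem} then gives a nodal set $X$ with $as(X)=(\Z_n,\alpha)$, and $|\text{Im}(\alpha)|=k+1=1+\Omega(n)$. Sharpness follows by comparison with the upper bound in the preceding corollary.

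The main obstacle is the gcd bookkeeping in the key step: making sure each $\gcd(n,\alpha_m-1)$ is exactly $n/d_m$, so that no chain step collapses and every $K'_m$ is nonempty. Choosing $t_m$ coprime to $n$ handles this. An alternative is to take $\alpha(0)$ finite, for instance $\alpha(0)\equiv 1\pmod n$ and larger than all $\alpha_m$. That choice requires the finite case of the realization theorem, so I would use $\alpha(0)=\infty$ to keep the argument simplest.
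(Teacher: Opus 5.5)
Your proposal is correct and follows essentially the same route as the paper. The paper also takes $s_m=d_{m-1}$ along a prime-by-prime divisor chain and sets $\alpha(s_m)-1=(n/d_m)(n^{l_m}+1)$, which is a special case of your $t_m\equiv 1 \pmod n$, so that $L_m=d_m$, and then invokes Theorem \ref{Thm: realization theorem}. The only difference is that the paper takes $\alpha(0)=\alpha(s_{\Omega(n)})+1$ finite, which needs the finite case of the realization theorem, whereas your choice $\alpha(0)=\infty$ needs only the polynomial $iz^n+\sum_m iz^{\alpha_m+n-1}$.
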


\begin{proof}
    Let $n = p_1^{r_1}...p_k^{r_k}$ be the prime decomposition of $n$, where $r_j\ge 1$ for each $j=1,\ldots,k$. Let us consider the semicomplex $(\Z_n,\alpha)$ with $Im(\alpha) = \{\alpha(s_1),\ldots,\alpha(s_k), \alpha(0)\}$, where 
    $$\begin{cases}s_1 = 1;\\ s_2 = p_{1}^{2-1};\\\vdots\\s_{r_1+1} = p_1^{r_1};\\s_{r_1 + 2} = p_1^{r_1}p_2^{2-1};\\\vdots\\s_{r_1 + r_2 +...+r_k}=p_1^{r_1}...p_k^{r_k-1}\end{cases}
    $$ 
    and 
    $$\begin{cases}\alpha(s_1) = p_1^{r_1 - 1}p_2^{r_2}...p_k^{r_k}(n^{l_1} + 1)+1;\\\alpha(s_2) = p_1^{r_1 - 2}p_2^{r_2}...p_k^{r_k}(n^{l_2} + 1) +1;\\\vdots\\\alpha(s_{r_1}) = p_2^{r_2}...p_k^{r_k}(n^{l_{r_1}} + 1)+1;\\\alpha(s_{r_1 + 1}) = p_2^{r_2 - 1}p_3^{r_3}...p_k^{r_k}(n^{l_{r_1+1}} + 1)+1;\\\vdots\\\alpha(s_{r_1+...+r_k}) = n^{l_{r_1+...+r_k}}+2\end{cases},
    $$

    where the numbers $l_j \ge 1$ are chosen so that $\alpha(s_1) < \alpha(s_2)<...<\alpha(s_{r_1+...+r_k})$. We finally set $\alpha(0) = \alpha(s_{r_1+...+r_k})+1$.

    Let $K'_m$, $m=1,\ldots,\Omega(n)=r_1+\cdots+r_k$ be as in the hypothesis of Theorem \ref{Thm: realization theorem}. We shall prove that $s_m \in K'_m$ for each $m=1,\ldots,\Omega(n)$. This is equivalent to showing that for every $j < m$, $\frac{n}{\gcd(n, \alpha(s_j) - 1)} \mid s_m$, and $\frac{n}{\gcd(n, \alpha(s_m) - 1)} \nmid s_m$, which are equivalent to $n \mid s_m(\alpha(s_j) - 1)$ and $n \nmid s_m(\alpha(s_m) - 1)$. Notice that $s_j$ and $\alpha(s_j)$ were chosen so that the conditions are satisfied. Therefore, by Theorem \ref{Thm: realization theorem}, there is a nodal set $X$ such that $as(X) = (\Z_n, \alpha)$. Hence, $|\text{Im}(\alpha)| = 1 + r_1 +...+ r_k = 1 + \Omega(n)$. 
\end{proof}

\begin{definition}\label{Def: filtration semicomplex}
    Let $(\Z_n, \alpha)$ be a semicomplex such that $$Im(\alpha) = \{\alpha(s_1),\ldots,\alpha(s_r),\alpha(0)\}.$$ We say that $(\Z_n, \alpha)$ is a \textbf{filtration semicomplex} if it satisfies the conditions in the hypothesis of Theorem \ref{Thm: realization theorem}.
\end{definition}

\begin{remark}\label{Rem: complet alg invariant}
    The associated semicomplex of a nodal set is always a filtration semicomplex, by Theorem \ref{Thm: distribution of the LCO's in nodal sets}. Therefore, Theorem \ref{Thm: realization theorem} implies that the filtration semicomplex is a complete algebraic invariant with respect to the outer Lipschitz Geometry of nodal sets united with their tangent cones. 
\end{remark}

\begin{theorem}
    Let $S =(\Z_n, \alpha)$ a semicomplex. $S$ is a nodal semicomplex if and only if $S$ is a filtration semicomplex. 
\end{theorem}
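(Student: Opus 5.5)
The plan is to prove the two implications separately, each by combining results already established in this section.

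For the direction ``filtration $\Rightarrow$ nodal'', there is nothing new to do. By Definition \ref{Def: filtration semicomplex}, a filtration semicomplex satisfies the hypotheses of Theorem \ref{Thm: realization theorem}. That theorem then provides a nodal set $X$ with $as(X)=(\Z_n,\alpha)$, which by Definition \ref{Def: assoc semicomplex} means exactly that $S$ is a nodal semicomplex.

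For the direction ``nodal $\Rightarrow$ filtration'', let $X$ be a nodal set with $as(X)=(\Z_n,\alpha)$. We check the hypotheses of Theorem \ref{Thm: realization theorem} one at a time.

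First, we normalize by a rotation so that $\alpha(0)$ is the maximal linear contact order, as is done throughout the section. By Lemma \ref{Lem: holomorohic preserves}, this only translates the index of $\alpha$, and that is an isomorphism of semicomplexes. We then list the distinct values as $\alpha(s_1)<\cdots<\alpha(s_r)<\alpha(0)$.

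If $r=0$, then $\alpha$ is constant. In that case $K'_1=K'_{r+1}=\Z_n$, and the conditions hold trivially.

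If $r\ge 1$, we apply Theorem \ref{Thm: distribution of the LCO's in nodal sets}. Its groups $H_{s_j}$ coincide with $\langle L_j\rangle$ by Definition \ref{Def: H's}, since $L_j=n/\gcd(n,\alpha(s_j)-1)$. Hence its conclusion states precisely that the level set $\{\alpha=\alpha(s_m)\}$ equals $K'_m$ for $m\le r$, and that $\{\alpha=\alpha(0)\}$ equals $K'_{r+1}$. Two consequences follow directly:
\begin{enumerate}
\item $\alpha$ is constant on each $K'_m$, since each $K'_m$ is a level set;
\item $s_m\in K'_m$, since $s_m$ lies in its own level set.
\end{enumerate}

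The main obstacle is consistency with the normalization in Theorem \ref{Thm: realization theorem}, whose hypotheses single out the index $0$ as carrying the maximal value. Being a filtration semicomplex should therefore be understood up to isomorphism, meaning up to the cyclic shift that moves a maximizing index to $0$. A related subtlety arises if several indices realize the maximum: one must check that Theorem \ref{Thm: distribution of the LCO's in nodal sets} still applies, which it does because it is stated for an arbitrary set of $r+1$ distinct values with the largest at $0$. I would state this shift convention explicitly, note that both the nodal and the filtration properties are invariant under it (by Corollary \ref{Cor: generalização da maior ordem de contato} and its rotation argument), and then combine the two directions.
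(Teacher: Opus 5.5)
Your proof is correct and is essentially the paper's own argument (cf.\ Remark \ref{Rem: complet alg invariant}): nodal $\Rightarrow$ filtration follows from Theorem \ref{Thm: distribution of the LCO's in nodal sets}, whose level sets are exactly the $K'_m$ because $H_{s_j}=\langle L_j\rangle$, and filtration $\Rightarrow$ nodal is Theorem \ref{Thm: realization theorem} applied directly. Your separate handling of the constant case $r=0$, which Theorem \ref{Thm: distribution of the LCO's in nodal sets} excludes, and your observation that the statement presupposes the section's convention that $\alpha(0)$ is maximal are sensible additions; under that convention no further shift-invariance argument is needed.
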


\subsection{Contact Orders of Unions of Nodal Sets} 

Along this subsection we will now study the union of finitely many nodal sets $Z(f_1), \ldots, Z(f_p)$ such that $$0\in \bigcap_{i=1}^{p}Z(f_i).$$ In which the opposite semibranch of $\Gamma$ in $X$ is its opposite semibranch in $Z(f_i)$, for some $i$ such that $\Gamma$ is a semibranch of $Z(f_i)$.

\begin{lemma}\label{Lem: image of nodal set is nodal set} 
    Let $f\colon B(0,\varepsilon) \to \R$ be a harmonic function, $X = Z(f)$ and $h\colon B(0,\varepsilon) \to B(0,\delta)$ be a holomorphic homeomorphism. Then $h(X)$ is a nodal set. 
\end{lemma}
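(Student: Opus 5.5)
The plan is to exhibit $h(X)$ directly as the zero level of a harmonic function on $B(0,\delta)$. Since $h\colon B(0,\varepsilon)\to B(0,\delta)$ is a holomorphic homeomorphism, its inverse $h^{-1}\colon B(0,\delta)\to B(0,\varepsilon)$ is also holomorphic. Indeed, an injective holomorphic map has nonvanishing derivative, so the holomorphic inverse function theorem applies at every point. We then set
$$g := f\circ h^{-1}\colon B(0,\delta)\to \R.$$
Because $h$ is a bijection, we have $Z(g)=\{w : f(h^{-1}(w))=0\} = h(Z(f)) = h(X)$. It therefore only remains to show that $g$ is harmonic.

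For harmonicity, we use Remark \ref{Rem: harmonic is locally real analytic}. Since $B(0,\varepsilon)$ is simply connected, there is a holomorphic $F\colon B(0,\varepsilon)\to\C$ with $f=\Re F$. Then $g=\Re(F\circ h^{-1})$, and $F\circ h^{-1}$ is holomorphic as a composition of holomorphic maps. The real part of a holomorphic function is harmonic by the Cauchy--Riemann equations, so $g$ is harmonic. If one prefers to avoid the global primitive, the same argument works locally: around each point, write $f=\Re F$ on a small disc and compose with $h^{-1}$ on the preimage disc. Harmonicity is a local property, so this suffices. Alternatively, one can compute directly that $\Delta(f\circ\varphi)=|\varphi'|^2(\Delta f)\circ\varphi$ for holomorphic $\varphi$.

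The only step needing care is the holomorphy of $h^{-1}$, that is, that $h'$ never vanishes. I would handle it by the standard argument. If $h'(z_0)=0$, then near $z_0$ the map $h(z)-h(z_0)$ has a zero of order $k\ge2$. Then, by the local normal form (or the argument principle/Rouché), $h$ is $k$-to-$1$ near $z_0$, contradicting injectivity. Everything else is a direct verification, and the conclusion $h(X)=Z(g)$ with $g$ harmonic is exactly the definition of a nodal set (Definition \ref{Def: nodal set}).
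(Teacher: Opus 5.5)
Your proof is correct and follows the paper's argument: write $f=\Re F$ on the ball, note that $h(X)$ is the zero level of $\Re(F\circ h^{-1})$, and conclude that this function is harmonic. You also justify that $h^{-1}$ is holomorphic, since an injective holomorphic map has $h'\neq 0$; the paper uses this fact without comment.
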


\begin{proof}
By Remark \ref{Rem: harmonic is locally real analytic}, there is an analytic function $F\colon B(0,\varepsilon) \to \C$ such that $f(z) = \Re(F(z))$. 

    Hence, $X = Z(f) = f^{-1}(0) = F^{-1}(i\R)$, and then $$h(X) = h(F^{-1}(i\R)) = (F \circ h^{-1})^{-1}(i\R).$$ Thus, as $F \circ h^{-1}$ is an analytic function, this implies that the zero level of $g(z) = \Re(F(h^{-1}(z))) $ is exactly $h(X)$. 
\end{proof}

\begin{definition}
    Let $\alpha\colon \Z_n \to \N_{\infty} $ be a (resp., nodal or filtration) semicomplex. We call a restriction of $\alpha$ to a coset of a subgroup of $\Z_n$ a \textbf{partial (resp., nodal or filtration) semicomplex}. An \textbf{isomorphism of partial semicomplexes} $\beta_i\colon K_i \to \N_{\infty}$ for $i = 1,2$ is a bijection $\gamma\colon K_1 \to K_2$ such that $\beta_1(k)= \beta_2(\gamma(k))$ for each $k \in K_1$.
\end{definition}

Although a formula for the contact order between two branches of distinct nodal sets is extremely complex, by Lemma \ref{Lem: image of nodal set is nodal set}, one can generalize Theorem \ref{Thm: distribution of the LCO's in nodal sets} to unions of two nodal sets, since by Lemma \ref{Lemma: Lema da Topologia de conjuntos nodais}, there is an analytic bijection between one nodal set and its tangent cone which transforms nodal sets into nodal sets by Lemma \ref{Lem: image of nodal set is nodal set}, preserving contact orders by Lemma \ref{Lem: holomorohic preserves}. Therefore, the general case is reduced to Theorem \ref{Thm: distribution of the LCO's in nodal sets}. Theorem \ref{Thm: generalização do principal} is a precise formulation of the result discussed in this context.

Consider germs at $0$ of nodal sets $(X,0)$ and $(Y,0)$ with $n$ and $m$
branches, respectively, having at least two tangent branches. Without loss of generality, we may assume that the
$x$-axis is tangent to a branch of $X$ and to a branch of $Y$. We denote by
$\Gamma_X^0,\ldots,\Gamma_X^{n-1}$ and $\Gamma_Y^0,\ldots,\Gamma_Y^{m-1}$ the
branches of $X$ and $Y$, respectively, numbered counterclockwise from the
positive $x$-axis. Thus, by
Lemma~\ref{Lemma: Lema da Topologia de conjuntos nodais}, $\Gamma_X^k$ is
tangent to the line $\mathbb{R}e^{\frac{k\pi i}{n}}$ and $\Gamma_Y^l$ is
tangent to the line $\mathbb{R}e^{\frac{l\pi i}{m}}$, for $k\in\mathbb{Z}_n$
and $l\in\mathbb{Z}_m$. We write $as(X)=(\mathbb{Z}_n,\alpha_X)$. 

\begin{theorem}\label{Thm: generalização do principal}
Let $(X,0)$ and $(Y,0)$ be as above, and let
$\Gamma_X^{k_1},\ldots,\Gamma_X^{k_r}$ and $\Gamma_Y^{l_1},\ldots,\Gamma_Y^{l_r}$
be the only branches such that $\mathrm{Cont}(\Gamma_Y^{l_s},\Gamma_X^{k_s})>1$
for $s=1,\ldots,r$. Then:
\begin{enumerate}
  \item $r=\gcd(n,m)$;
  \item $K=\{k_1,\ldots,k_r\}$ is the subgroup
  $\left\langle \frac{n}{\gcd(n,m)}\right\rangle$ of $\mathbb{Z}_n$;
  \item the function $\beta_X\colon K\to\mathbb{N}_\infty$ given by
  $\beta_X(k_s)=\mathrm{Cont}(\Gamma_Y^{l_s},\Gamma_X^{k_s})$, $s=1,\ldots,r$,
  is a partial nodal semicomplex.
\end{enumerate}
\end{theorem}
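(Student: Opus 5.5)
The plan is to treat items (1)–(2), which only involve tangent directions, separately from item (3), which needs the straightening map of Lemma \ref{Lemma: Lema da Topologia de conjuntos nodais}.

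\emph{Items (1) and (2).} By Definition \ref{Def: angle}, $\mathrm{Cont}(\Gamma_Y^{l},\Gamma_X^{k})>1$ holds if and only if the two branches have angle zero. Since both are regular analytic curves, this means they have the same tangent line, i.e. $\mathbb{R}e^{\frac{k\pi i}{n}}=\mathbb{R}e^{\frac{l\pi i}{m}}$. Equivalently, $\frac{k}{n}-\frac{l}{m}\in\mathbb{Z}$, and with $0\le k<n$, $0\le l<m$ this forces $\frac kn=\frac lm$. Write $g=\gcd(n,m)$, $n=gn'$, $m=gm'$ with $\gcd(n',m')=1$. Then $km'=ln'$, so $n'\mid k$, i.e. $k=tn'$ and $l=tm'$ with $t=0,\ldots,g-1$. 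Each such $k$ has exactly one partner $l$. Hence $r=g=\gcd(n,m)$ and $K=\{tn'\}=\langle n/\gcd(n,m)\rangle\subset\mathbb{Z}_n$, which settles (1) and (2).

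\emph{Item (3).} I would apply Lemma \ref{Lemma: Lema da Topologia de conjuntos nodais} to $Y$. It gives a complex analytic homeomorphism $h$ from a neighbourhood $B$ of $0$ onto $B(0,1)$ sending the semibranches of $Y$ onto the rays $\{te^{\frac{j\pi i}{m}}\}$. In particular, $h$ maps each branch $\Gamma_Y^l$ onto the straight line $\mathbb{R}e^{\frac{l\pi i}{m}}$ and $\Gamma_Y^0$ onto the real axis. Since $h$ is a holomorphic homeomorphism fixing $0$, we have $h'(0)\neq 0$, so $h'(0)$ acts as a rotation composed with a dilation. Because $h$ sends $\Gamma_Y^0$, which is tangent to the positive $x$-axis, onto the positive $x$-axis, this rotation is trivial and $h'(0)>0$. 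Consequently $h$ preserves all tangent directions at $0$: $h(\Gamma_X^k)$ is tangent to $\mathbb{R}e^{\frac{k\pi i}{n}}$.

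By Lemma \ref{Lem: image of nodal set is nodal set}, $X':=h(X)$ is a nodal set. Its branches $h(\Gamma_X^k)$, $k\in\mathbb{Z}_n$, carry the same numbering, so $as(X')=(\mathbb{Z}_n,\alpha')$ for some $\alpha'$. By Lemma \ref{Lem: holomorohic preserves}, for $s=1,\ldots,r$,
$$\beta_X(k_s)=\mathrm{Cont}(\Gamma_Y^{l_s},\Gamma_X^{k_s})=\mathrm{Cont}\big(h(\Gamma_Y^{l_s}),h(\Gamma_X^{k_s})\big).$$
Here $h(\Gamma_Y^{l_s})$ is exactly the line $\mathbb{R}e^{\frac{l_s\pi i}{m}}=\mathbb{R}e^{\frac{k_s\pi i}{n}}$, which is the tangent cone of $h(\Gamma_X^{k_s})$. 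Working semibranch by semibranch, with the two matching rays and using Corollary \ref{Cor: opposite}, the right-hand side equals $LC(h(\Gamma_X^{k_s}),0)=\alpha'(k_s)$. Thus $\beta_X=\alpha'|_K$. Since $\alpha'$ is a nodal semicomplex and $K$ is a subgroup of $\mathbb{Z}_n$, hence a coset, $\beta_X$ is a partial nodal semicomplex.

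The main obstacle is the normalization of $h$ in item (3): one must check that $h$ fixes tangent directions, so that the index $k$ of a branch of $X$ equals the index of its image in $as(X')$, and that $h(\Gamma_Y^{l_s})$ is literally the tangent cone of $h(\Gamma_X^{k_s})$. I would handle this with the explicit form $h(\gamma_k(t))=te^{i\frac{(k-1)\pi}{m}}$ given in Lemma \ref{Lemma: Lema da Topologia de conjuntos nodais}, together with conformality of $h$ at $0$. If needed, I would first compose $h$ with a rotation so that $\Gamma_Y^0$ goes to the positive real axis.
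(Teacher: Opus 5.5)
Your proposal is correct and follows essentially the same route as the paper. Items (1)--(2) come from the condition $\frac{k}{n}-\frac{l}{m}\in\mathbb{Z}$ together with $\gcd(n',m')=1$. Item (3) uses the holomorphic straightening map $h$ of $Y$, the fact that $h(X)$ is again nodal, and the invariance of contact orders under $h$, which together give $\beta_X=\alpha_{h(X)}|_K$. Your check that $h'(0)>0$, so that $h$ preserves every tangent direction, justifies the normalization properties (a)--(b) that the paper simply asserts.
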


\begin{proof}
Let $d=\gcd(n,m)$ and write $n=dn'$ and $m=dm'$, so that $\gcd(n',m')=1$.

By Lemma~\ref{Lemma: Lema da Topologia de conjuntos nodais}, distinct branches
of a nodal set have distinct tangent cones, and by Definition~\ref{Def: angle},
two branches satisfy $\mathrm{Cont}>1$ if and only if they have the same
tangent cone. Hence, each branch of $X$ is tangent to at most one branch of
$Y$ and conversely. Therefore $r=|K|$, and $K$ is the set of
$k\in\mathbb{Z}_n$ such that $\Gamma_X^k$ and some $\Gamma_Y^l$ have the same
tangent line. Since two vectors $e^{ia}$ and $e^{ib}$ are colinear if
and only if $a-b\in\pi\mathbb{Z}$, we obtain that $k\in K$ if and only if
there is $l\in\{0,\ldots,m-1\}$ such that
\begin{equation}\label{eq:common line}
\frac{k}{n}-\frac{l}{m}\in\mathbb{Z}.
\end{equation}
This condition does not depend on the representatives of $k$ modulo $n$ and
of $l$ modulo $m$, since replacing $k$ by $k+n$ or $l$ by $l+m$ changes the
left-hand side by an integer.

\textit{(1) and (2).} Let us show that $k\in K$ if and only if $n'\mid k$. This
condition is well defined on $\mathbb{Z}_n$, since $n'\mid n$.

Suppose that \eqref{eq:common line} holds, say
$\frac{k}{n}-\frac{l}{m}=q\in\mathbb{Z}$. Multiplying by $dn'm'$ gives
\[
km'-ln'=q\,d\,n'm',
\]
then $n'\mid km'$. Since $\gcd(n',m')=1$, it follows that $n'\mid k$.

Conversely, suppose that $k=jn'$ for some $j\in\mathbb{Z}$, and let
$l\in\{0,\ldots,m-1\}$ be such that $l\equiv jm'\pmod m$. Then
\[
\frac{jn'}{dn'}-\frac{jm'}{dm'}=0,
\]
therefore, by the remark after \eqref{eq:common line}, the pair $(k,l)$ satisfies
\eqref{eq:common line}. 

Since $k\in K$ if and only if $n'\mid k$, we obtain $K=\{k\in\mathbb{Z}_n : n'\mid k\}=\langle n'\rangle
=\left\langle \frac{n}{d}\right\rangle$, which proves (2). Since
$\langle n'\rangle$ is the subgroup of $\mathbb{Z}_n$ of order $n/n'=d$, we
get $r=|K|=d$, which proves (1).

\textit{(3).} By Lemma \ref{Lemma: Lema da Topologia de conjuntos nodais}, there is a
holomorphic diffeomorphism
\[
h\colon V\to\mathbb{C},\qquad
\]

for $V$ a neighborhood of the origin, such that we have
\begin{enumerate}
  \item[(a)] for every $s=1,\ldots,r$, $h(\Gamma_Y^{l_s})$ is, as a germ at
  $0$, the tangent line of $\Gamma_Y^{l_s}$, which is
  $\mathbb{R}e^{\frac{k_s\pi i}{n}}$;
  \item[(b)] for every $k\in\mathbb{Z}_n$, $h(\Gamma_X^k)$ is a branch tangent
  to $\mathbb{R}e^{\frac{k\pi i}{n}}$.
\end{enumerate}
By Lemma~\ref{Lem: image of nodal set is nodal set}, $h(X\cap V)$ is a nodal
set, and by (b) its branches are $h(\Gamma_X^0),\ldots,h(\Gamma_X^{n-1})$,
numbered counterclockwise from the positive $x$-axis. Thus we write
$as(h(X))=(\mathbb{Z}_n,\alpha_{h(X)})$ with
$\alpha_{h(X)}(k)=LC(h(\Gamma_X^k),0)$.

Since $h'(0)\neq0$, Lemma~\ref{Lem: holomorohic preserves} and (a) give, for
every $s=1,\ldots,r$,
\[
\beta_X(k_s)
=\mathrm{Cont}\bigl(\Gamma_Y^{l_s},\Gamma_X^{k_s}\bigr)
=\mathrm{Cont}\bigl(h(\Gamma_Y^{l_s}),h(\Gamma_X^{k_s})\bigr)
= \] \[
= \mathrm{Cont}\bigl(\mathbb{R}e^{\frac{k_s\pi i}{n}},h(\Gamma_X^{k_s})\bigr)
=LC\bigl(h(\Gamma_X^{k_s}),0\bigr)
=\alpha_{h(X)}(k_s),
\]
where the fourth equality follows from Definitions~\ref{Def: linear contact order}
and~\ref{Def: branch cont order}. Therefore $\beta_X=\alpha_{h(X)}|_K$. Since
$(\mathbb{Z}_n,\alpha_{h(X)})$ is the associated semicomplex of a nodal set
and, by (2), $K$ is a subgroup of $\mathbb{Z}_n$, it follows that $\beta_X$ is a partial nodal semicomplex.
\end{proof}

\begin{corollary}\label{cor: group filtration for two nodal sets}
    In the hypotheses of Theorem \ref{Thm: generalização do principal}, there is a group filtration 

    $$\Z_n = K_1  \supsetneq K_2\supsetneq K_3\supsetneq \dots \supsetneq K_r$$

    such that $\beta$ is constant along $K\cap (K_s \setminus K_{s+1})$ for each $s = 0,...,r-1$. We also obtain that the function $s \mapsto \beta(K \cap (K_s \setminus K_{s+1}))$ is increasing. 
\end{corollary}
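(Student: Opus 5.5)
The plan is to transfer the contact filtration of the nodal set $h(X)$ to $\beta$. Theorem \ref{Thm: generalização do principal}(3) gives the key identity $\beta = \alpha_{h(X)}|_K$, where $h$ is the holomorphic diffeomorphism straightening the branches of $Y$ and $h(X)$ is a nodal set by Lemma \ref{Lem: image of nodal set is nodal set}. So the whole statement should reduce to Theorem \ref{Thm: distribution of the LCO's in nodal sets}, applied to $h(X)$ and then restricted to the subgroup $K=\langle n/\gcd(n,m)\rangle$.

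First, I normalize. A rotation preserves contact orders by Lemma \ref{Lem: holomorohic preserves} and is compatible with the construction of $h$. Using it, I place the branch of $h(X)$ realizing the maximal linear contact order of $as(h(X))$ on the positive $x$-axis, so that the convention of Theorem \ref{Thm: distribution of the LCO's in nodal sets} holds. If $as(h(X))$ has only one value, $\beta$ is constant and the trivial filtration suffices.

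Otherwise, list the distinct values $\alpha_{h(X)}(s_1)<\dots<\alpha_{h(X)}(s_{r'})<\alpha_{h(X)}(0)$. Take the contact filtration of $h(X)$:
$$K_1=\Z_n,\qquad K_j=\bigcap_{i=1}^{j-1}H_{s_i},$$
with the subgroups $H_{s_i}$ computed from $\alpha_{h(X)}$ as in Definition \ref{Def: H's}. These inclusions are strict, because each difference $K_j\setminus K_{j+1}$ contains $s_j$. Theorem \ref{Thm: distribution of the LCO's in nodal sets}, in the form of Remark \ref{Rem: Teorema principal reescrito em filtracoes}, says that $\alpha_{h(X)}$ is constant on each $K_j\setminus K_{j+1}$ and on the last subgroup. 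Its value there is $\alpha_{h(X)}(s_j)$, which strictly increases with $j$.

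Intersecting with $K$ then gives that $\beta=\alpha_{h(X)}|_K$ is constant on each $K\cap(K_j\setminus K_{j+1})$. The map $j\mapsto\beta(K\cap(K_j\setminus K_{j+1}))$ is increasing wherever the intersection is nonempty, since it inherits the order of the values $\alpha_{h(X)}(s_j)$. Empty intersections carry no value, so I either discard those indices or note that the statement only concerns nonempty pieces. If one wants the filtration to be strict after intersecting with $K$, one can alternatively refilter with $K\cap K_j$ and drop repetitions. I would state which of these two readings is intended; in either reading the monotonicity survives.

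The main obstacle is bookkeeping rather than new mathematics. The indexing in the statement ($s=0,\dots,r-1$, with the letter $r$ reused from Theorem \ref{Thm: generalização do principal}) needs to be reconciled with the number $r'+1$ of terms in the contact filtration of $h(X)$. One must also check that the normalizing rotation does not disturb the identification of $K$ with the indices of the tangent branches. This works because rotating by a multiple of $\pi/n$ merely translates indices in $\Z_n$, and the cosets of $K$ are then handled by the same argument.
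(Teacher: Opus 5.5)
You are following the intended route: the paper gives no separate proof of this corollary and means exactly your reduction, $\beta=\alpha_{h(X)}|_K$ combined with Theorem \ref{Thm: distribution of the LCO's in nodal sets}. There is, however, a real gap in your normalization step, and it is not just bookkeeping.

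\textbf{The gap.} You rotate so that the index $t$ where $\alpha_{h(X)}$ is maximal \emph{over all of} $\Z_n$ goes to $0$.
\begin{itemize}
\item If $t\notin K$, then after this rotation the $x$-axis is no longer tangent to any branch of $Y$. You have left the hypotheses of Theorem \ref{Thm: generalização do principal}, and the common-tangent indices become the coset $K-t$.
\item Undoing the rotation, your argument only shows that $\beta$ is constant on the sets $K\cap\bigl(t+(K_j\setminus K_{j+1})\bigr)$. That is a \emph{translated} filtration, not the sets $K\cap(K_j\setminus K_{j+1})$ the statement asks for.
\end{itemize}
Saying that cosets are ``handled by the same argument'' does not close this. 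Consider the only non-vacuous reading of the statement, where the last subgroup counts as a piece as in Remark \ref{Rem: Teorema principal reescrito em filtracoes}. (If only differences count, the one-term filtration satisfies the statement trivially.) Then $0$ lies in every subgroup, hence in the top piece, so $\beta(0)$ must be the maximum of $\beta$ on $K$. This fails for $\beta^X_{34}$ in the paper's final example: there $K=\Z_2$, $\beta(0)=2$ and $\beta(1)=3$. In that indexing no group filtration of $\Z_2$ works.

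\textbf{The missing idea.} Use the freedom in the normalization to put at the origin a maximizer of $\beta$ on $K$, not a maximizer of $\alpha_{h(X)}$ on $\Z_n$. Then use the following observation.

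Let $\Z_n=K_1\supsetneq\cdots\supsetneq K_{r'+1}$ be the contact filtration of $h(X)$ rotated by $t$, with values $v_1<\cdots<v_{r'+1}$. Set $K_{r'+2}=\emptyset$ and $C_j=K\cap(t+K_j)$. Then:
\begin{itemize}
\item $\beta=v_j$ exactly on $C_j\setminus C_{j+1}$;
\item the $C_j$ are nested, with $C_1=K$;
\item each nonempty $C_j$ equals $c+(K\cap K_j)$ for any $c\in C_j$.
\end{itemize}
Take $j_*$ maximal with $C_{j_*}\neq\emptyset$, and pick $s_*\in C_{j_*}$. Then $\beta(s_*)=v_{j_*}=\max_K\beta$, and $C_j=s_*+(K\cap K_j)$ for all $j\le j_*$.

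Now rotate $X$ and $Y$ together by $e^{-s_*\pi i/n}$. This keeps a common tangent on the $x$-axis and leaves $K$ unchanged, since $s_*\in K$. In the new indexing, $\beta$ takes the value $v_j$ on $K\cap(K_j\setminus K_{j+1})$ for $j<j_*$, and the value $v_{j_*}$ on $K\cap K_{j_*}$. So $K_1\supsetneq\cdots\supsetneq K_{j_*}$ is the required filtration, after discarding empty pieces as you propose.

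When $t\in K$, your rotation is already of this type, and your argument goes through as written.
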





\begin{corollary}\label{Cor: realization of partials}
Let $H$ be a subgroup of $\mathbb{Z}_n$ and let $\beta'\colon H\to\mathbb{N}_\infty$
be a partial filtration semicomplex, that is, $\beta'=\alpha|_H$ for some
filtration semicomplex $(\mathbb{Z}_n,\alpha)$. Then there are germs of nodal
sets $(X,0)$ and $(Y,0)$ satisfying the hypotheses of
Theorem~\ref{Thm: generalização do principal} such that $K=H$ and the function
$\beta_X$ of Theorem~\ref{Thm: generalização do principal} is exactly $\beta'$.
\end{corollary}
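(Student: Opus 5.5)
Given the filtration semicomplex $(\Z_n,\alpha)$ with $\beta'=\alpha|_H$, we first invoke Theorem \ref{Thm: realization theorem} to get a nodal set $X_0=Z(\Re F)$ with $as(X_0)=(\Z_n,\alpha)$. Its branches $\Gamma^k_{X_0}$ are tangent to $\R e^{\frac{k\pi i}{n}}$, with $\Gamma^0_{X_0}$ tangent to the $x$-axis.

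Since $H$ is a subgroup of $\Z_n$, it equals $\langle n/d\rangle$ for a unique divisor $d$ of $n$. We take $Y_0$ to be the nodal set of $\Re(iz^{d})$. It is the union of the $d$ lines $\R e^{\frac{l\pi i}{d}}$, $l\in\Z_d$, and is a cone, so $C(Y_0,0)=Y_0$. With $m=d$ we have $\gcd(n,m)=d$. By items (1) and (2) of Theorem \ref{Thm: generalização do principal}, the branches of $X_0$ tangent to branches of $Y_0$ are exactly those with index in $\langle n/d\rangle=H$. This is also direct: $e^{\frac{k\pi i}{n}}$ lies on some line $\R e^{\frac{l\pi i}{d}}$ iff $\frac kn-\frac ld\in\Z$, iff $\frac nd\mid k$. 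Both sets contain a branch tangent to the $x$-axis, as the setup requires.

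For $k\in H$, the matching branch of $Y_0$ is precisely the line $C(\Gamma^k_{X_0},0)$. Hence
\[
\beta_{X_0}(k)=\mathrm{Cont}\bigl(\Gamma^k_{X_0},C(\Gamma^k_{X_0},0)\bigr)=LC(\Gamma^k_{X_0},0)=\alpha(k)=\beta'(k),
\]
using Definitions \ref{Def: linear contact order} and \ref{Def: branch cont order}. Set $X=X_0$ and $Y=Y_0$. No holomorphic change of coordinates is needed, since $Y_0$ is already straight. Lemmas \ref{Lem: image of nodal set is nodal set} and \ref{Lem: holomorohic preserves} would only be needed if one wanted $Y$ non-conical, in which case one would transport the whole picture by a biholomorphism tangent to the identity.

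The main obstacle is bookkeeping. We must check that the indexing in Theorem \ref{Thm: generalização do principal} matches: the branch of $X$ labelled $k$ must correspond to the element $k\in H$, and the paper's convention places the maximal contact order at $\Gamma_0$. Both hold because we keep the labelling produced by Theorem \ref{Thm: realization theorem} and $Y_0$ contains the $x$-axis.

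A second point is the degenerate case $d=1$. There $H=\{0\}$ and $Y_0$ is the single line $y=0$. This is still a nodal set, but it has only one branch, so we must confirm that the phrase "at least two tangent branches" is read as "at least one pair of tangent branches," i.e. one branch of $X$ and one of $Y$ sharing a tangent line. If the hypothesis really demands more, we would replace $Y_0$ by the image of $Z(\Re(iz^{d}))$ under a small biholomorphism $z\mapsto z+cz^N$ with $N$ huge. This leaves all contact orders $\le\max\alpha$ unchanged. It works when $\alpha(0)<\infty$, and for $\alpha(0)=\infty$ the straight lines are used as is.
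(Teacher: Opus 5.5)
Your proof is correct and is essentially the paper's own argument. You realize $(\Z_n,\alpha)$ by Theorem \ref{Thm: realization theorem}, take $Y=Z(\Re(iz^{h}))$ with $h=|H|$ (your $d$), check that the branches of $X$ sharing a tangent line with $Y$ are exactly those indexed by $H=\langle n/h\rangle$, and read off $\beta_X(k)=LC(\Gamma_X^k,0)=\alpha(k)=\beta'(k)$. The paper uses this same $Y$ for every $h$, including $h=1$, so your closing perturbation is unnecessary; in any case, composing with $z\mapsto z+cz^N$ could not change the number of branches of $Y$.
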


\begin{proof}
By Theorem~\ref{Thm: realization theorem}, there is a germ of a nodal set $(X,0)$ such that
$as(X)=(\mathbb{Z}_n,\alpha)$. That is, the semibranches
$\Gamma_X^0,\ldots,\Gamma_X^{n-1}$ of $X$ are numbered counterclockwise from
the positive $x$-axis, $\Gamma_X^k$ is tangent to the line
$\mathbb{R}e^{\frac{k\pi i}{n}}$, and $LC(\Gamma_X^k,0)=\alpha(k)$ for every
$k\in\mathbb{Z}_n$.

Let $h=|H|$. Since $\mathbb{Z}_n$ is cyclic, $h\mid n$ and $H$ is its unique
subgroup of order $h$, namely $H=\left\langle \frac{n}{h}\right\rangle
=\left\{ \frac{jn}{h} : j=0,\ldots,h-1\right\}$. Consider the harmonic function
$g(x,y)=\mathfrak{Re}(i z^h)=-\mathfrak{Im}(z^h)$ and $Y=Z(g)$. Then $z\in Y$ if
and only if $z^h\in\mathbb{R}$, hence
\[
Y=\bigcup_{j=0}^{h-1}\mathbb{R}e^{\frac{j\pi i}{h}}
\]
is a nodal set whose branches are the $h$ lines
$\Gamma_Y^j=\mathbb{R}e^{\frac{j\pi i}{h}}$, $j=0,\ldots,h-1$, numbered
counterclockwise from the positive $x$-axis. Therefore, $X$ and $Y$ satisfy the preceding normalization condition of Theorem~\ref{Thm: generalização do principal}.

Since $e^{\frac{j\pi i}{h}}=e^{\frac{(jn/h)\pi i}{n}}$, the line $\Gamma_Y^j$
is the tangent line of $\Gamma_X^{jn/h}$. Since the lines
$\mathbb{R}e^{\frac{k\pi i}{n}}$, $k\in\mathbb{Z}_n$, are pairwise distinct,
a branch $\Gamma_X^k$ has the same tangent line as some branch of $Y$ if and
only if $k\in H$, and in that case this branch of $Y$ is unique. By
Definition~\ref{Def: angle}, the branches of $X$ and $Y$ with contact order
greater than $1$ are exactly the pairs $\Gamma_X^{jn/h}$, $\Gamma_Y^j$, for
$j=0,\ldots,h-1$. Therefore, in Theorem~\ref{Thm: generalização do principal},
we have $r=h$ and $K=H$.

Finally, let $k=\frac{jn}{h}\in H$. The branch $\Gamma_Y^j$ is the tangent
line of $\Gamma_X^k$, so each semibranch of $\Gamma_Y^j$ is the tangent cone
at $0$ of the semibranch of $\Gamma_X^k$ lying on the same side. By
Definitions~\ref{Def: linear contact order},
\[
\beta_X(k)=\mathrm{Cont}\bigl(\Gamma_Y^j,\Gamma_X^k\bigr)=LC(\Gamma_X^k,0)
=\alpha(k)=\beta'(k).
\]
Hence $\beta_X=\beta'$.
\end{proof}

\begin{remark}
If $\beta'$ is defined on a coset $K=k_0+H$ with $k_0\notin H$, then, by
Theorem~\ref{Thm: generalização do principal}, $\beta'$ cannot arise from a
pair $(X,Y)$ normalized as above, since in that case $K$ is a subgroup.
Nevertheless, replacing $X$ by its rotation $e^{-\frac{k_0\pi i}{n}}X$, whose
associated semicomplex is $s\mapsto\alpha(s+k_0)$, the proof above realizes the
translated function $s\mapsto\beta'(s+k_0)$ on $H$.
\end{remark}

The generalization of Corollaries \ref{Cor: realization of partials}, \ref{cor: group filtration for two nodal sets}, and Theorem \ref{Thm: generalização do principal} to the union of a finite number of curves is not true, as one may think if we extend naturally the notion of partial semicomplexes and the notion of isomorphism of partial semicomplexes. 

\begin{definition}
    A \textbf{complex} is a complete graph $G_n$ with a family of functions that, for any two distinct vertices $v_1, v_2$, we associate a partial semicomplex $\alpha_{v_1,v_2}\colon K_{v_1,v_2} \to \N_{\infty}$. 
\end{definition}

\begin{definition}
    Let $(X_1,0),...,(X_n,0)$ germs of nodal curves. The \textbf{nodal complex} of $X_1\cup \dots \cup X_n$ at $0$ is denoted by $nc(X_1 \cup \dots \cup X_n, 0)$ and is a complete graph whose set of vertices is $\{X_1,...,X_n\}$. For each $i \neq j$, the function associated with $X_i$ and $X_j$ is the function $\beta_{ij}$ given in Theorem \ref{Thm: generalização do principal}. 
\end{definition}

\begin{definition}
    Let $(K_n, \{\alpha_{v_i, v_j} \} _{1 \le i \neq j \le n})$ and $(G_m, \{\beta_{w_i,w_j} \} _{1 \le i \neq j \le m})$ be two complexes. An \textbf{isomorphism of complexes} is a graph isomorphism $\gamma\colon K_n \to G_m$ such that $\alpha_{v_i, v_j}$ and $\beta_{\gamma(v_i), \gamma(v_j)}$ are isomorphic as partial nodal semicomplexes. 
\end{definition}

\begin{example}
Let $X = \bigcup_{i=1}^{4} X_i$ and $Y = \bigcup_{i=1}^{4} Y_i$, where
\begin{align*}
X_1 &\colon\ -2xy - 3x^2y + y^3 = 0, \\
X_2 &\colon\ -2xy - 6x^2y + 2y^3 + x^4 - 6x^2y^2 + y^4 = 0, \\
X_3 &\colon\ -2xy + x^3 - 3xy^2 = 0, \\
X_4 &\colon\ -2xy + 2x^3 - 6xy^2 + x^4 - 6x^2y^2 + y^4 = 0,
\end{align*}
and
\begin{align*}
Y_1 &= X_1, \qquad Y_2 = X_2, \\
Y_3 &\colon\ -2xy + x^3 - 9x^2y - 3xy^2 + 3y^3 = 0, \\
Y_4 &\colon\ -2xy + x^3 - 12x^2y - 3xy^2 + 4y^3 = 0.
\end{align*}
\end{example}

Each $X_i$ and each $Y_i$ has exactly two branches at the origin, one tangent
to the $x$-axis and one tangent to the $y$-axis, since the lowest-degree term
of every defining polynomial is $-2xy = \mathfrak{Re}(iz^2)$. Identify the
direction of the $x$-axis with $0 \in \mathbb{Z}_2$ and the direction of the
$y$-axis with $1 \in \mathbb{Z}_2$. Then, for every $i \neq j$, the function
of Theorem~\ref{Thm: generalização do principal} associated with $X_i$ and $X_j$ is defined on all of
$\mathbb{Z}_2$, and so is the one associated with $Y_i$ and $Y_j$. We denote
them by $\beta^X_{ij}, \beta^Y_{ij} \colon \mathbb{Z}_2 \to \mathbb{N}_\infty$.

\begin{table}[ht]
\centering
\begin{tabular}{ccc@{\hskip 2em}ccc}
\\
\toprule
Branches & $x$-axis & $y$-axis & Branches & $x$-axis & $y$-axis \\
\midrule
$\beta_{1,2}^X$ & $\mathbf{3}$ & $2$ & $\beta_{1,2}^Y$ & $\mathbf{3}$ & $2$ \\
$\beta_{1,3}^X$ & $2$ & $2$ & $\beta_{1,3}^Y$ & $2$ & $2$ \\
$\beta_{1,4}^X$ & $2$ & $2$ & $\beta_{1,4}^Y$ & $2$ & $2$ \\
$\beta_{2,3}^X$ & $2$ & $2$ & $\beta_{2,3}^Y$ & $2$ & $2$ \\
$\beta_{2,4}^X$ & $2$ & $2$ & $\beta_{2,4}^Y$ & $2$ & $2$ \\
$\beta_{3,4}^X$ & $2$ & $\mathbf{3}$ & $\beta_{3,4}^Y$ & $\mathbf{3}$ & $2$ \\
\bottomrule
\\
\end{tabular}
\caption{Contact orders of the tangent branches with tangent cone in the direction of the $x$-axis and $y$-axis.}
\label{tab:contatos}
\end{table}

We claim that $nc(X_1 \cup \dots \cup X_4, 0)$ and $nc(Y_1 \cup \dots \cup Y_4, 0)$
are isomorphic. Let
$\gamma \colon K_4 \to K_4$ be the graph isomorphism $\gamma(X_i) = Y_i$. We
must show that, for each pair $i \neq j$, the partial nodal semicomplexes
$\beta^X_{ij}$ and $\beta^Y_{ij}$ are isomorphic, that is, that there is a
bijection $\gamma_{ij} \colon \mathbb{Z}_2 \to \mathbb{Z}_2$ with
$\beta^Y_{ij}(\gamma_{ij}(k)) = \beta^X_{ij}(k)$ for $k \in \mathbb{Z}_2$, for each $i \neq j \in \{1,2, 3,4\}$
\begin{itemize}
  \item For $\{i,j\} \neq \{3,4\}$ we have $\beta^X_{ij} = \beta^Y_{ij}$, so
  $\gamma_{ij} = \mathrm{id}_{\mathbb{Z}_2}$ works.
  \item For $\{i,j\} = \{3,4\}$ we have $\beta^X_{34}(0) = 2 = \beta^Y_{34}(1)$
  and $\beta^X_{34}(1) = 3 = \beta^Y_{34}(0)$, so the swap
  $\gamma_{34}(k) = k + 1$ works.
\end{itemize}
Then the complexes are isomorphic. Moreover, both unions have $8$ branches. However, the two unions are not outer bi-Lipschitz equivalent, since in $X$ the contact orders $3$ happen in two different directions (both the axes), while in $Y$ it only occurs in the $x$-axis. 

\printbibliography

\end{document}